\numberwithin{equation}{section}
\newtheorem{theorem}{Theorem}[section]
\newtheorem{proposition}[theorem]{Proposition}
\newtheorem{lemma}[theorem]{Lemma}
\newtheorem{corollary}[theorem]{Corollary}
\newtheorem{Definition}[theorem]{Definition}
\newtheorem{notations}[theorem]{Notations}
\newenvironment{definition}{\begin{Definition}\rm}{\end{Definition}}
\newtheorem{Remark}[theorem]{Remark}
\newenvironment{remark}{\begin{Remark}\rm}{\end{Remark}}
\newtheorem{RHproblem}[theorem]{RH problem}
\newtheorem{Example}[theorem]{Example}
\newenvironment{example}{\begin{Example}\rm}{\end{Example}}
\theoremstyle{definition}
\newcommand{\indicatrice}[1]{\mathds{1}_{#1}}
\newcommand{\N}{\mathbb{N}}
\def\adots{\mathinner{\mkern2mu\raise 1pt\hbox{.}\mkern 3mu\raise 
3pt\hbox{.}\mkern1mu\raise 5pt\hbox{{.}}}}
\renewcommand{\tilde}{\widetilde}
\begin{document}
\title[a quantitative interpretation of the frequent hypercyclicity criterion]{a quantitative interpretation of the frequent hypercyclicity criterion}
\author{R. Ernst, A. Mouze}
\address{Romuald Ernst, LMPA, Centre Universitaire de la Mi-Voix, Maison de la Recherche Blaise-Pascal, 50 rue Ferdinand Buisson, BP 699, 62228 Calais Cedex}
\email{ernst.r@math.cnrs.fr}
\address{Augustin Mouze, Laboratoire Paul Painlev\'e, UMR 8524, 
Cit\'e Scientifique, 59650 Villeneuve d'Ascq, France, Current address: \'Ecole Centrale de
Lille, Cit\'e Scientifique, CS20048, 59651 Villeneuve d'Ascq cedex}
\email{Augustin.Mouze@math.univ-lille1.fr}

\keywords{frequently hypercyclic operators, weighted densities}
\subjclass[2010]{47A16, 37B50}

\begin{abstract} We give a quantitative interpretation of the Frequent Hypercyclicity Criterion. 
Actually we show that an operator which satisfies the Frequent Hypercyclicity Criterion 
is necessarily $A$-frequently hypercyclic, where $A$ refers to some weighted densities sharper than the natural lower density. In that order, we exhibit different scales of weighted densities that are of interest to quantify the ``frequency'' measured by the Frequent Hypercyclicity Criterion. Moreover we construct an example of 
 unilateral weighted shift which is frequently hypercyclic but not $A$-frequently hypercyclic on a particular scale. 
\end{abstract}

\maketitle

\section{Introduction}\label{sec_intro} The notion of frequent hypercyclicity was introduced in the context of 
linear dynamics by Bayart and Grivaux in 2006 \cite{Baygrihyp}, \cite{Baygrifrequentlyhcop}. This latter is now a central notion in that field and is highly connected 
to combinatorics, number theory and ergodic theory. Let $X$ be a metrizable and complete topological vector space and $L(X)$ be the space of continuous linear operators on $X.$ An operator $T\in L(X)$ is 
said to be \textit{hypercyclic} if there exists $x\in X$ such that for any non-empty open set $U\subset X$, the return set 
$\{n\geq 0:T^nx\in U\}$ is non-empty or equivalently infinite. Such a vector $x$ is called a hypercyclic vector for $T$. 
Furthermore an operator $T$ is called \textit{frequently hypercyclic} if 
there exists $x\in X$ such that for any non-empty open set $U\subset X$, the set of integers 
$n$ satisfying $T^nx\in U$ has positive lower density, i.e. 
$$\liminf_{N\rightarrow +\infty}\frac{\#\{k\leq N:T^kx\in U\}}{N}>0,$$
where as usual $\#$ denotes the cardinality of the corresponding set. Thus the notion of frequent hypercyclicity 
extends the classical hypercyclicity and appraises \textit{how often} the orbit of a hypercyclic vector 
visits every non-empty open set. In the sequel we denote by $\mathbb{N}$ the set of positive integers and for 
any $x\in X$ and any subset $U\subset X$ we set $N(x,U):=\{n\in\mathbb{N}: T^nx\in U\}.$ Given a subset $E\subset\mathbb{N},$ 
we define its \textit{lower and upper} densities respectively by
$$\underline{d}(E)=\liminf_{N\rightarrow +\infty}\frac{\#\{k\leq N:k\in E\}}{N}\hbox{ and }
\overline{d}(E)=\limsup_{N\rightarrow +\infty}\frac{\#\{k\leq N:k\in E\}}{N}.$$
In other words, an operator $T\in L(X)$ is hypercyclic (resp. frequently hypercyclic) if there exists $x\in X$ such that for any non-empty open set $U\subset X,$ the set $N(x,U)$ is non-empty (resp. has positive lower density). 
To prove that an operator is hypercyclic we have at our disposal the so-called 
Hypercyclicity Criterion (see \cite{Bay}, \cite{Grope} and the references therein). In the same spirit, 
Bayart and Grivaux stated the Frequent Hypercyclicity Criterion, 
which ensures that an operator is frequently hypercyclic \cite{Baygrifrequentlyhcop}. Let 
us recall it here.
\begin{theorem}\label{frequ_hyp_crit} Let $T$ be an operator on a separable 
Fr\'echet space $X.$ If there is a dense 
subset $X_0$ of $X$ and a map $S:X_0\rightarrow X_0$ such that, for any $x\in X_0,$ 
\begin{enumerate}[(i)]
\item $\displaystyle\sum_{n=0}^{+\infty}T^nx$ converges unconditionally,
\item $\displaystyle\sum_{n=0}^{+\infty}S^nx$ converges unconditionally,
\item $TSx=x,$
\end{enumerate}
then $T$ is frequently hypercyclic. 
\end{theorem}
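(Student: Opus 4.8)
The plan is to follow the classical combinatorial scheme of Bayart and Grivaux: one chooses cleverly a vector $x$, and for each target $y$ in a dense set one isolates along a suitable set of times a ``main term'' in the orbit of $x$ equal to $y$, the remaining ``past'' and ``future'' contributions being uniformly small. \emph{Step 1 (a spreading family of sets).} First I would record the combinatorial core of the criterion: there exist pairwise disjoint subsets $(A_p)_{p\ge 1}$ of $\N$, each with $\underline{d}(A_p)>0$, enjoying the spreading property that $|n-m|\ge p+q$ whenever $n\in A_p$, $m\in A_q$ and $(n,p)\neq(m,q)$; moreover each $A_p$ may be translated to have $\min A_p$ as large as we wish. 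Such a family is produced by hand (for instance starting from arithmetic progressions on dyadic scales and thinning them out), and translating by a constant affects neither the lower densities nor, if done with care on successive scales, the spreading property.

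\emph{Step 2 (the candidate vector).} Since $X$ is separable and $X_0$ is dense, fix a sequence $(y_p)_{p\ge 1}\subset X_0$ that is dense in $X$ and in which every point of a fixed countable dense subset of $X$ occurs for infinitely many indices $p$. Set
$$x=\sum_{p\ge 1}\,\sum_{n\in A_p} S^n y_p .$$
The convergence of $x$ is checked by noting that each inner series is a subseries of $\sum_n S^n y_p$, hence converges unconditionally by (ii), while choosing (inductively) $\min A_p$ large enough — permitted by Step 1 — forces the inner sum into a prescribed $2^{-p}$-small $0$-neighborhood, so the outer series also converges. \emph{Step 3 (behaviour of the orbit).} Fix $p$ and $N\in A_p$. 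Since $TS=\mathrm{Id}$ on $X_0$ and $S(X_0)\subset X_0$, one has $T^N S^N=\mathrm{Id}$ on $X_0$ and more generally $T^N S^n y_q=T^{\,N-n}y_q$ for $n<N$ and $T^N S^n y_q=S^{\,n-N}y_q$ for $n>N$; hence, by (iii),
$$T^N x=y_p+\sum_{q}\,\sum_{\substack{n\in A_q\\ n<N}}T^{\,N-n}y_q+\sum_{q}\,\sum_{\substack{n\in A_q\\ n>N}}S^{\,n-N}y_q .$$
By the spreading property, for each fixed $q$ the exponents $N-n$ appearing in the ``past'' sum are distinct integers $\ge p+q$, and likewise the exponents $n-N$ in the ``future'' sum are distinct and $\ge p+q$. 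Using the unconditional convergence in (i) for the past terms and in (ii) for the future terms, and summing the resulting $2^{-q}$-type bounds over $q$, both corrections lie in any prescribed $0$-neighborhood as soon as $p$ is large; by a diagonal adjustment of the neighborhoods chosen in Step 2 one arranges $T^N x\in y_p+V_p$ for \emph{all} $p$ and all $N\in A_p$, where $(V_p)$ is a fixed decreasing basis of $0$-neighborhoods.

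\emph{Step 4 (conclusion).} Let $U\subset X$ be nonempty and open. Pick a point $y$ of the fixed dense subset and an index $j$ with $y+V_j\subset U$, then an index $p\ge j$ with $y_p=y$, which is possible by Step 2. For every $N\in A_p$ we then get $T^N x\in y+V_p\subset y+V_j\subset U$, so $N(x,U)\supset A_p$ and $\underline{d}\big(N(x,U)\big)\ge\underline{d}(A_p)>0$. Thus $x$ is a frequently hypercyclic vector for $T$.

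The main obstacle is the bookkeeping in Steps 2–3. The rate of unconditional convergence of $\sum_n T^n y_p$ and of $\sum_n S^n y_p$ depends on $y_p$, so one cannot fix the sets $A_p$ in advance: the two constructions must be interleaved, choosing $A_p$ (in particular $\min A_p$) only after $y_1,\dots,y_p$, in such a way that the spreading property, the positivity of the lower densities $\underline{d}(A_p)$, and the smallness of all the relevant tails hold simultaneously. Once this is organized, the rest is a routine use of unconditional convergence in a Fréchet space — for every continuous seminorm $\rho$ and every $\varepsilon>0$ there is $n_0$ with $\rho\big(\sum_{n\in F}T^n y\big)<\varepsilon$ for every finite $F\subset[n_0,+\infty)$, and similarly for $S$ — together with the identity $TS=\mathrm{Id}$ on $X_0$.
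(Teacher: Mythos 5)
Your proposal is correct and is essentially the classical Bayart--Grivaux / Grosse-Erdmann--Peris argument that the paper only quotes and then refines: your Step 1 is exactly the combinatorial Lemma \ref{Lemgrope}, and the vector $x=\sum_p\sum_{n\in A_p}S^ny_p$ together with the past/future splitting of $T^Nx$ is precisely the scheme sketched in the proof of Theorem \ref{main_theorem}. The one mechanism to correct is that the needed flexibility does not come from translating a single-indexed family (translation cannot enlarge the gaps between $A_p$ and $A_q$, so the separation $p+q$ alone never dominates the unconditionality thresholds $N_q$), but from the doubly-indexed sets $A(l,\nu)$ of Lemma \ref{Lemgrope}: setting $A_p:=A(p,M_p)$ with $M_p\geq N_p$ forces every exponent in your past and future sums over $A_q$ to be at least $N_q$, which is exactly the interleaving you describe in your final paragraph.
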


We already know that the above result does not characterize frequently hypercyclic operators. Indeed 
Bayart and Grivaux have exhibited a frequently hypercyclic weighted shift on $c_0$ 
that does not satisfy this criterion \cite{Baygriinv}. A natural question arises: what does the Frequent Hypercyclicity Criterion really quantify?
In order to answer this question, B\`es, Menet, Peris and Puig recently generalized 
the notion of hypercyclic operators by introducing the concept of $\mathcal{A}$-frequent hypercyclicity, where 
$\mathcal{A}$ refers to a family of subsets of $\mathbb{N}$ satisfying suitable conditions \cite{Besmenperpuig}. In particular, $\mathcal{A}$ has to satisfy the following separation condition:
\begin{align*}
&\mathcal{A} \text{ contains a sequence $(A_k)$ of disjoint sets such that for any $j\in A_k$, any $j'\in A_{k'}$, $j\neq j'$,}\\
&\text{we have $\vert j'-j\vert\geq\max(k,k')$.}
\end{align*}

In this abstract framework, they also obtain an $\mathcal{A}$-Frequent Hypercyclicity Criterion and prove that the Frequent Hypercyclicity Criterion has very strong consequences in the sense that if $T$ satisfies the Frequent Hypercyclicity Criterion, then $T$ also satisfies the $\mathcal{A}$-Frequent Hypercyclicity Criterion for any suitable family $\mathcal{A}$. 
Bonilla and Grosse-Erdmann also studied specific notions related to the concept of $\mathcal{A}$-frequent hypercyclicity in the article \cite{BongrossUFHC}.\\ 
On the other hand, the notion of frequent hypercyclicity measures the frequency and the length of the intervals when iterates of a hypercyclic vector visits every non-empty open set in a very specific way, that is given by the natural density. Actually, there are many types and notions of densities different from the natural one. Our goal is to give quantified consequences to the Frequent Hypercyclicty Criterion in terms of weighted densities.
To that purpose, we consider a special kind of lower weighted densities, generalizing the natural one but sharper than this one, 
by using the formalism of matrix summability methods. For such a matrix $A$, we use the concept of $A$-density and $A$-frequent 
hypercyclicity (see Definitions \ref{defdens} and \ref{defiA} below). 
These general densities were already used in the context of linear dynamics to study frequently universal series \cite{Mouzemupol}. 
In the present paper, we show that the Frequent Hypercyclicity Criterion gives a stronger conclusion than frequent hypercyclicity, that we quantify thanks to explicit weighted densities on different scales. 
We refer the reader to Proposition \ref{main_prop} and Theorem \ref{main_theorem} below. Therefore an operator which satisfies the Frequent Hypercyclicity Criterion 
is necessarily $A$-frequently hypercyclic, where $A$ refers to some weighted densities sharper than the natural lower density.

Let us return to the formalism of $\mathcal{A}$-frequent hypercyclicity. For instance $\mathcal{A}$ could be a family of subsets with positive given lower weighted density satisfying 
the aforementioned separation property. However from \cite{Besmenperpuig}, there is an underlying question: does there exist a frequently hypercyclic operator not being 
$\mathcal{A}$-frequently hypercyclic? We give a positive answer by constructing an unilateral weighted shift on $c_0$ which is frequently hypercyclic but not $A$-frequently hypercyclic with respect to some $A$-densities covered by the criterion (see Theorem \ref{counterexample} below).\\

The paper is organized as follows: in Section \ref{ad} we introduce some densities that will be of interest in the sequel and some properties on these densities. 
Section \ref{const_classical} is devoted to an improvement of the Frequent Hypercyclicity Criterion for a certain scale of weighted densities. In Section \ref{furth_result}, we modify this proof in order to obtain a stronger result to the criterion. Finally in Section \ref{fhc_op},
we exhibit a new example, inspired by \cite{Bayru}, of an operator which is frequently hypercyclic although it does not satisfy the Frequent 
Hypercyclicity Criterion. To ensure this latter property, we will show that this operator is not $A$-frequently hypercyclic for some suitable matrix $A.$

\section{Densities: preliminary results}\label{ad} In this section, we state some definitions and results we shall need throughout the paper. 
Let us first introduce the concept of summability matrix and its connections with some kind of densities 
on subsets of $\mathbb{N}.$

\begin{definition}{\rm A {\it summability matrix} is an infinite matrix $M=(m_{n,k})$ of complex numbers.}
\end{definition}

Let us recall that, if $(x_n)$ is a sequence and $M=(m_{n,k})$ is a summability matrix, then by $Mx$ we denote the sequence 
$((Mx)_1,(Mx)_2,\dots)$ where $(Mx)_n=\sum_{k=1}^{+\infty}m_{n,k}x_k.$ The matrix $M$ is called {\it regular} if 
the convergence of $x$ to $c$ implies the convergence of 
$Mx$ to $c.$ By a well-known result of Toeplitz (see for instance \cite{Zygmund}), 
$M$ is regular if and only if the following three conditions hold:
\begin{equation}\label{CondToeplitz}
\left\{\begin{array}{rl}
(i)&\displaystyle\lim_{n\rightarrow +\infty}m_{n,k}=0,\hbox{ for all }k\in\mathbb{N},\\
(ii)&\displaystyle\lim_{n\rightarrow +\infty}\sum_{k\geq 1}m_{n,k}=1,\\
(iii)&\sup_n \sum_{k\geq 1}\vert m_{n,k}\vert<\infty.\end{array}\right.
\end{equation}

Freedman and Sember showed that every regular summability matrix $M$ with non-negative real coefficients defines a density $\underline{d}_M$ on subsets of $\mathbb{N}$, called lower $M$-density \cite{Freedman}.

\begin{definition}\label{defdens} {\rm For a regular matrix $M=(m_{n,k})$ with non-negative coefficients 
and a set $E\subset \mathbb{N},$ the lower $M$-density 
of $E,$ denoted $\underline{d}_M(E),$ is defined by 
$$\underline{d}_M(E)=\liminf_{n\rightarrow +\infty}\sum_{k=1}^{+\infty}m_{n,k}\indicatrice{E}(k),$$
and the associated upper $M$-density, denoted by $\overline{d}_M(E),$ is defined by 
$$\overline{d}_M(E)=1-\underline{d}_M(\mathbb{N}\setminus E).$$ 
}
\end{definition} 

\begin{remark}{\rm For a non-negative regular matrix $M=(m_{n,k})$, 
Proposition 3.1 of \cite{Freedman} ensures that 
the upper $M$-density of any set $E\subset\mathbb{N}$ is given by 
$$\overline{d}_M(E)=\limsup_{n\rightarrow +\infty}\sum_{k=1}^{+\infty}m_{n,k}\indicatrice{E}(k).$$}
\end{remark}

Let $(\alpha_k)_{k\geq 1}$ 
be a non-negative sequence such that $\sum_{k=1}^n\alpha_k\rightarrow +\infty$ as $n$ tends to $\infty.$ 
Then, we deal with the special case of $A$-density where 
we write $A=\left(\alpha_k/\sum_{j=1}^n\alpha_j\right),$ 
when $A=(\alpha_{n,k})$ with $\alpha_{n,k}=\alpha_k/\sum_{j=1}^n\alpha_j$ for $1\leq k\leq n$ and $\alpha_{n,k}=0$ for $k>n.$ 
It is easy to check that $A$ is a non-negative regular summability matrix. In summability theory the transformation 
given by $x=(x_n)\mapsto Ax$ is called the Riesz mean $(A,\alpha_n).$ Here the associated $A$-density can be viewed as 
a weighted density with respect to the non-negative weight sequence $(\alpha_k)_{k\geq 1}.$ 

\begin{definition}{\rm A summability matrix $A=\left(\alpha_k/\sum_{j=1}^n\alpha_j\right)$ as above 
will be called an {\it admissible} matrix. We define its summatory function $\varphi_{\alpha}$ as follows:
$\varphi_{\alpha}: \N\rightarrow \mathbb{R}_+,$  $\varphi_{\alpha}(n)=\sum_{k\leq n}\alpha_k.$}
\end{definition}

\begin{example}\label{examplelogdensity} \begin{enumerate}{\rm 
\item \label{examplelogdensity1} If $\alpha_k=1,$ $k=1,2,\dots,$ then the summability matrix $A$ is the well-known Ces\`aro matrix and $\underline{d}_A$ is the natural lower density. 
\item \label{examplelogdensity2} If $\alpha_k=1/k,$ $k=1,2,\dots,$ $\underline{d}_A$ is the so-called lower logarithmic density, which is 
derived from the well-known logarithmic summability method. We have 
$\varphi_{\alpha}(k)\sim \log (k),$ as $k$ tends to $+\infty.$
\item \label{examplelogdensity3} The special case $\alpha_k=k^r,$ $r\geq -1$, for $k=1,2,\dots,$ 
generalizes both the natural density ($r=0$) and 
the logarithmic density ($r=-1$). Clearly we have $\varphi_{\alpha}(k)\sim \frac{k^{r+1}}{r+1},$ as $k$ tends to 
$+\infty,$ when $r>-1.$ 
\item \label{examplelogdensity4} If $\alpha_k=e^{k^r},$ $0<r< 1,$ for $k=1,2,\dots,$ an easy calculation gives $\varphi_{\alpha}(k)\sim \frac{k^{1-r}}{r}e^{k^r},$ as $k$ tends to 
$+\infty.$
\item \label{examplelogdensity5} If $\alpha_k=e^{k}$, for $k=1,2,\dots,$ then $\varphi_{\alpha}(k)\sim \frac{e}{e-1}e^{k},$ as $k$ tends to 
$+\infty.$
\item \label{examplelogdensity6} If $\alpha_1=1$ and $\alpha_k=e^{k/\log^r(k)},$ $r> 0,$ for $k=2,3,\dots,$ a summation 
by parts gives $\varphi_{\alpha}(k)\sim \log^r(k)e^{k/\log^r(k)},$ as $k$ tends to 
$+\infty.$
\item \label{examplelogdensity7} Let $h_s$ be the real function defined by $h_{s}=\log.\log^{(s)},$ 
with $\log^{(s)}=\log\circ\log\dots\circ\log,$ where $\log$ appears $s$ times. 
If $\alpha_k=e^{k/h_s(k)},$ $l\in\mathbb{N},$ $s\geq 2,$ for $k$ large enough, again a summation 
by parts gives $\varphi_{\alpha}(k)\sim h_s(k)e^{k/h_s(k)},$ as $k$ tends to 
$+\infty.$}
\end{enumerate}
\end{example}   

In the following sections, we shall use the following definitions connected to Example \ref{examplelogdensity}.

\begin{definition}\label{def1} \textrm{ We denote by 
\begin{enumerate}
\item $C_r$ the admissible matrix $C_r=\left(k^r/\sum_{j=1}^nj^r\right),$ $r\geq -1;$
\item $A_r$ the admissible matrix $A_r=\left(e^{k^r}/\sum_{j=1}^n e^{j^r}\right),$ $r\geq 0;$
\item $B_r$ the admissible matrix $B_r=(\alpha_k/\sum_{j=1}^n\alpha_j)$ with $\alpha_1=1$ and 
$\alpha_k=e^{k/\log^r(k)},$ for $k\geq 2,$ $r\geq 0$;
\item Let $h_s$ be the real function defined by $h_{s}=\log.\log^{(s)},$ 
with $\log^{(s)}=\log\circ\log\dots\circ\log,$ where $\log$ appears $s$ times. We denote by 
$\tilde{B}_s$ the admissible matrix $\tilde{B}_s=(\alpha_k/\sum_{j=1}^n\alpha_j)$ with 
$\alpha_k=e^{k/h_s(k)},$ for $k$ large enough and $s\geq 2.$ 
\end{enumerate}
}
\end{definition}

For any subset $E\subset\mathbb{N},$ we can write $E$ as a strictly increasing sequence 
$(n_k)$ of positive integers. It is well-known that $\underline{d}(E)=\liminf_{k\rightarrow +\infty}
\frac{k}{n_k}$ which allows to deduce the following simple fact: $\underline{d}(E)>0$ if and only if 
the sequence $\left(\frac{n_k}{k}\right)$ is bounded \cite{Grope}. The following lemma extends this remark to suitable $A$-densities. 

\begin{lemma}\label{LemmaDensInfCalc} Let $(\alpha_k)$  be a non-negative sequence such 
that $\sum_{k\in\N} \alpha_k=+\infty.$ Assume that the sequence $(\alpha_n/\sum_{j=1}^n\alpha_j)$ converges to 
zero as $n$ tends to $+\infty$. Let $(n_k)$ be an increasing sequence of integers forming a subset 
$E\subset\mathbb{N}.$
Then, we have 
$$\underline{d}_{A}(E)=\liminf_{k\rightarrow +\infty}\left(\frac{\sum_{j=1}^{k}\alpha_{n_j}}{\sum_{j=1}^{n_k}\alpha_j}\right),$$
where $\underline{d}_{A}$ is the $A$-density given by the summability matrix $A=(\alpha_k/\sum_{j=1}^n\alpha_j).$ 
\end{lemma}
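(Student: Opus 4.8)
The plan is to compute the lower $A$-density directly from its definition, $\underline{d}_A(E)=\liminf_{n\to+\infty}\sum_{k=1}^{n}\alpha_k\indicatrice{E}(k)/\sum_{j=1}^n\alpha_j$, and to show that the $\liminf$ over all $n$ coincides with the $\liminf$ taken only along the subsequence $n=n_k$. Write $S(n)=\sum_{j=1}^n\alpha_j$ and $\Sigma(n)=\sum_{j=1}^n\alpha_j\indicatrice{E}(j)=\sum_{j:\,n_j\leq n}\alpha_{n_j}$, so that the quantity under the $\liminf$ is $\Sigma(n)/S(n)$, and note that for $n=n_k$ we have $\Sigma(n_k)=\sum_{j=1}^k\alpha_{n_j}$, which is exactly the right-hand side of the claimed formula. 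Since the subsequential $\liminf$ is always at least the full $\liminf$, one inequality is immediate; the work is the reverse inequality.

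For the reverse inequality, the key observation is that between two consecutive elements $n_k$ and $n_{k+1}$ of $E$ the numerator $\Sigma(n)$ is constant, equal to $\sum_{j=1}^k\alpha_{n_j}$, while the denominator $S(n)$ is non-decreasing. Hence on the block $n_k\leq n<n_{k+1}$ the ratio $\Sigma(n)/S(n)$ is minimized at $n=n_{k+1}-1$, i.e. just before the jump; and $S(n_{k+1}-1)=S(n_{k+1})-\alpha_{n_{k+1}}$. Therefore
$$
\inf_{n_k\leq n<n_{k+1}}\frac{\Sigma(n)}{S(n)}=\frac{\sum_{j=1}^{k}\alpha_{n_j}}{S(n_{k+1})-\alpha_{n_{k+1}}}
=\frac{\sum_{j=1}^{k}\alpha_{n_j}}{S(n_{k+1})}\cdot\frac{1}{1-\alpha_{n_{k+1}}/S(n_{k+1})}.
$$
Taking the $\liminf$ over $k$ of the whole expression, the first factor tends (along a suitable subsequence realizing the $\liminf$) to the right-hand side of the lemma, and here is where the hypothesis $\alpha_n/S(n)\to 0$ enters: it forces $\alpha_{n_{k+1}}/S(n_{k+1})\leq \alpha_{n_{k+1}}/S(n_{k+1})\to 0$, so the correction factor $1/(1-\alpha_{n_{k+1}}/S(n_{k+1}))$ tends to $1$. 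Consequently the blockwise infima have the same $\liminf$ as the values $\sum_{j=1}^{k}\alpha_{n_j}/S(n_{k+1})$, which in turn have the same $\liminf$ as $\sum_{j=1}^{k}\alpha_{n_j}/S(n_{k})$ (shifting the index $k\mapsto k-1$ changes nothing in the limit, again using that an extra term $\alpha_{n_k}/S(n_k)\to 0$ is negligible). Since $\inf_n \Sigma(n)/S(n)$ over a range of $n$ is the minimum of the blockwise infima, $\underline{d}_A(E)=\inf$ over all blocks $=\liminf_k \sum_{j=1}^k\alpha_{n_j}/\sum_{j=1}^{n_k}\alpha_j$, as claimed.

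I expect the main obstacle to be purely bookkeeping: one must argue carefully that the global $\liminf$ of $\Sigma(n)/S(n)$ equals the $\liminf$ of the per-block minima (which is clear since every $n$ lies in exactly one block and, within a block, the minimum is attained), and then that passing from $S(n_{k+1})-\alpha_{n_{k+1}}$ to $S(n_{k})$ in the denominator does not change the $\liminf$. Both steps are elementary consequences of $\alpha_n/S(n)\to 0$ together with the fact that $\sum\alpha_k=+\infty$ (which guarantees $S(n)\to+\infty$ and hence that the matrix $A$ is well-defined and regular), but they require writing the ratios as a product of the target ratio and a factor tending to $1$, and invoking the standard fact that if $u_k\to 1$ and $(v_k)$ is bounded then $\liminf u_kv_k=\liminf v_k$. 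No deeper idea is needed; the statement is essentially the $A$-weighted analogue of the classical identity $\underline{d}(E)=\liminf_k k/n_k$ recalled just before the lemma, and the proof is the natural adaptation of it.
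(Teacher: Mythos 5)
Your proof is correct and follows essentially the same route as the paper: both arguments exploit that the weighted counting function $\Sigma(n)$ is constant on each block $n_k\leq n<n_{k+1}$, sandwich the ratio $\Sigma(n)/S(n)$ between the $k$-th and (a perturbation of) the $(k+1)$-th term of the target sequence, and use the hypothesis $\alpha_n/S(n)\to 0$ to dispose of the correction terms. The paper phrases this as a single two-sided inequality rather than isolating the exact per-block minimum, but the content is identical.
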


\begin{proof} Let us consider $n_k\leq N <n_{k+1},$ then 
$$\frac{\sum_{j=1}^{k+1}\alpha_{n_j}}{\sum_{j=1}^{n_{k+1}}\alpha_j}-\frac{\alpha_{n_{k+1}}}{\sum_{j=1}^{n_{k+1}}\alpha_j}=
\frac{\sum_{j=1}^k\alpha_{n_j}}{\sum_{j=1}^{n_{k+1}}\alpha_j}
\leq 
\frac{\sum_{n_j\leq N}\alpha_{n_j}}{\sum_{j=1}^N\alpha_j}\leq
\frac{\sum_{j=1}^k\alpha_{n_j}}{\sum_{j=1}^{n_k}\alpha_j}.$$
Thus, we deduce
$$\underline{d}_{A}(E)=\liminf_{N\rightarrow +\infty}
\left(\frac{\sum_{n_j\leq N}\alpha_{n_j}}{\sum_{j=1}^N\alpha_j}\right)=
\liminf_{k\rightarrow +\infty}\left(\frac{\sum_{j=1}^{k}\alpha_{n_j}}{\sum_{j=1}^{n_k}\alpha_j}\right).$$
\end{proof}

In the present paper, we are mainly interested in sharper $A$-densities than the classical natural density. 
From this point of view, the following lemma gives some conditions to ensure that the sequence $(\alpha_k)$ leads to a sharper density.

\begin{lemma}\label{lemmastieltjes} Let $(\alpha_k)$ and $(\beta_k)$  
be non-negative sequences such that $\sum_{k\in\N} \alpha_k=\sum_{k\in\N} \beta_k=+\infty.$ Assume that the sequence $(\alpha_k/\beta_k)$ is eventually decreasing to zero. 
Let $A=(\alpha_k/\sum_{j=1}^n\alpha_j)$ and 
$B=(\beta_k/\sum_{j=1}^n\beta_j)$ be the associated admissible matrices. 
Then, for every subset $E\subset\mathbb{N},$ we have 
$$\underline{d}_{B}(E)\leq \underline{d}_{A}(E) \leq \overline{d}_{A}(E) \leq \overline{d}_{B}(E).$$
\end{lemma}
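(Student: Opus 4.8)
The plan is to reduce the four-term chain to the single inequality $\underline{d}_{B}(E)\leq\underline{d}_{A}(E)$ for an arbitrary $E\subset\mathbb{N}$, and then to prove the latter by a summation-by-parts argument (the ``Stieltjes'' trick). First, the middle inequality $\underline{d}_{A}(E)\leq\overline{d}_{A}(E)$ is immediate: by the Remark above, both sides are respectively the $\liminf$ and the $\limsup$ of the sequence whose $n$-th term is $\big(\sum_{j\leq n,\,j\in E}\alpha_{j}\big)/\big(\sum_{j\leq n}\alpha_{j}\big)$. Second, the rightmost inequality is the leftmost one applied to the complement: from $\overline{d}_{M}(F)=1-\underline{d}_{M}(\mathbb{N}\setminus F)$ (valid for $M=A$ and $M=B$), the inequality $\underline{d}_{B}(\mathbb{N}\setminus E)\leq\underline{d}_{A}(\mathbb{N}\setminus E)$ gives exactly $\overline{d}_{A}(E)\leq\overline{d}_{B}(E)$. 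So everything comes down to proving $\underline{d}_{B}(E)\leq\underline{d}_{A}(E)$.

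For this, set $w_{k}=\alpha_{k}/\beta_{k}$, so that $\alpha_{k}=w_{k}\beta_{k}$ and, by hypothesis, $(w_{k})$ is nonincreasing for $k\geq K_{0}$ and tends to $0$. Write $S_{n}^{\alpha}=\sum_{j\leq n}\alpha_{j}$, $S_{n}^{\beta}=\sum_{j\leq n}\beta_{j}$ and $T_{n}=\sum_{j\leq n,\,j\in E}\beta_{j}$; then by Definition \ref{defdens} one has $\underline{d}_{B}(E)=\liminf_{n}T_{n}/S_{n}^{\beta}=:\ell$ and $\underline{d}_{A}(E)=\liminf_{n}\big(\sum_{j\leq n,\,j\in E}\alpha_{j}\big)/S_{n}^{\alpha}$. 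The computational core is a pair of Abel summations. Taking partial sums $T_{k}$,
$$\sum_{j\leq n,\,j\in E}\alpha_{j}=\sum_{j\leq n,\,j\in E}\beta_{j}w_{j}=T_{n}w_{n}+\sum_{k=1}^{n-1}T_{k}(w_{k}-w_{k+1}),$$
while taking partial sums $S_{k}^{\beta}$ yields the identity $S_{n}^{\alpha}=S_{n}^{\beta}w_{n}+\sum_{k=1}^{n-1}S_{k}^{\beta}(w_{k}-w_{k+1})$.

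Now fix $\varepsilon>0$ and choose $K\geq K_{0}$ so that $T_{k}\geq(\ell-\varepsilon)S_{k}^{\beta}$ for every $k\geq K$. Splitting the first sum at $K$, bounding the finitely many terms with $k<K$ by a constant, using $w_{k}-w_{k+1}\geq0$ together with $T_{k}\geq(\ell-\varepsilon)S_{k}^{\beta}$ for $k\geq K$ (and likewise for the boundary term $T_{n}w_{n}$), and then recombining the tail by the identity for $S_{n}^{\alpha}$, one arrives at
$$\sum_{j\leq n,\,j\in E}\alpha_{j}\ \geq\ (\ell-\varepsilon)\,S_{n}^{\alpha}+C\qquad(n\geq K),$$
where $C=\sum_{k=1}^{K-1}\big(T_{k}-(\ell-\varepsilon)S_{k}^{\beta}\big)(w_{k}-w_{k+1})$ does not depend on $n$. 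Since $S_{n}^{\alpha}\to+\infty$, dividing by $S_{n}^{\alpha}$ and letting $n\to+\infty$ gives $\underline{d}_{A}(E)\geq\ell-\varepsilon$, and letting $\varepsilon\to0$ yields $\underline{d}_{A}(E)\geq\ell=\underline{d}_{B}(E)$, as desired.

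The only delicate point is that $(w_{k})$ is merely \emph{eventually} monotone: the portion of the Abel sum where $w_{k}-w_{k+1}$ may have arbitrary sign is confined to a finite range and hence contributes only the bounded constant $C$, which disappears after division by $S_{n}^{\alpha}\to+\infty$. It is worth noting that the displayed lower bound is uniform in $n\geq K$, which is exactly what is needed in order to control the full $\liminf$ (and not just a value along a subsequence); monotonicity of $(w_{k})$ is used solely to keep track of the sign of $w_{k}-w_{k+1}$, and the condition $w_{k}\to0$ is not needed for this half of the statement.
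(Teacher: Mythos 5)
Your proof is correct and uses essentially the same argument as the paper: an Abel summation against the eventually monotone ratio $\alpha_k/\beta_k$, with the non-monotone initial segment absorbed into a constant that vanishes after dividing by $S_n^{\alpha}\to+\infty$, plus complementation for the remaining inequality. The only (immaterial) difference is the direction: you prove $\underline{d}_B\leq\underline{d}_A$ directly and deduce $\overline{d}_A\leq\overline{d}_B$ by passing to complements, whereas the paper does the reverse.
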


\begin{proof} Let $E$ be a subset of $\mathbb{N}.$ For every $n\geq 1,$ let us define 
$\Lambda_E^{\alpha}(n)=\sum_{k=1}^n\alpha_k \indicatrice{E}(k)$ (resp. $\Lambda_E^{\beta}(n)=\sum_{k=1}^n\beta_k \indicatrice{E}(k)$). 
In particular, one may observe that $\Lambda_{\N}^{\alpha}=\varphi_{\alpha}.$ 
Now, let $N\geq 1$ be an integer such that the sequence $(\alpha_k/\beta_k)_{k\geq N}$ is decreasing. Then for every $n\geq N+1,$ we have 
$$\begin{array}{rcl}\displaystyle\sum_{k=N+1}^n \alpha_k\indicatrice{E}(k)&=&\displaystyle
\sum_{k=N+1}^{n-1}\Lambda_E^{\beta}(k)\left(\frac{\alpha_k}{\beta_k} - 
\frac{\alpha_{k+1}}{\beta_{k+1}}\right)+\Lambda_E^{\beta}(n)\frac{\alpha_n}{\beta_n}-\Lambda_E^{\beta}(N)\frac{\alpha_{N+1}}
{\beta_{N+1}}\\
&=&\displaystyle\sum_{k=N+1}^{n-1}\frac{\Lambda_E^{\beta}(k)}
{\varphi_{\beta}(k)}\varphi_{\beta}(k)\left(\frac{\alpha_k}{\beta_k} - 
\frac{\alpha_{k+1}}{\beta_{k+1}}\right)+\frac{\Lambda_E^{\beta}(n)}{\varphi_{\beta}(n)}\varphi_{\beta}(n)
\frac{\alpha_n}{\beta_n}-\Lambda_E^{\beta}(N)\frac{\alpha_{N+1}}{\beta_{N+1}}.
\end{array}$$
Moreover, since $(\alpha_k/\beta_k)$ is a non-negative decreasing sequence and  
$\sum\alpha_k=+\infty,$ we deduce
$$\begin{array}{rcl}\displaystyle\overline{d}_{A}(E)&=&
\displaystyle\limsup_{n\rightarrow +\infty}\left[\left(\varphi_{\alpha}(n)\right)^{-1}
\left(\sum_{k=1}^N\alpha_k\indicatrice{E}(k)+\displaystyle\sum_{k=N+1}^n \alpha_k\indicatrice{E}(k)\right)\right]\\
&=&
\displaystyle\limsup_{n\rightarrow +\infty}\left[(\varphi_{\alpha}(n))^{-1}
\left(\displaystyle\sum_{k=N+1}^{n-1}\frac{\Lambda_E^{\beta}(k)}
{\varphi_{\beta}(k)}\varphi_{\beta}(k)\left(\frac{\alpha_k}{\beta_k} - 
\frac{\alpha_{k+1}}{\beta_{k+1}}\right)+\frac{\Lambda_E^{\beta}(n)}{\varphi_{\beta}(n)}\varphi_{\beta}(n)\frac{\alpha_n}{\beta_n}\right)\right]\\
&\leq &\displaystyle\sup_{k>N}\left(\frac{\Lambda_E^{\beta}(k)}{\varphi_{\beta}(k)}\right)
\limsup_{n\rightarrow +\infty}\left[(\varphi_{\alpha}(n))^{-1}\left(
\displaystyle\sum_{k=N+1}^{n-1}\varphi_{\beta}(k)\left(\frac{\alpha_k}{\beta_k} - 
\frac{\alpha_{k+1}}{\beta_{k+1}}\right)+\varphi_{\beta}(n)
\frac{\alpha_n}{\beta_n}\right)\right].
\end{array}$$
Since $\sum_{k=N+1}^{n-1}\varphi_{\beta}(k)\left(\frac{\alpha_k}{\beta_k} - 
\frac{\alpha_{k+1}}{\beta_{k+1}}\right)+\varphi_{\beta}(n)
\frac{\alpha_n}{\beta_n}=\varphi_{\beta}(N+1)\frac{\alpha_{N+1}}{\beta_{N+1}}+\sum_{k=N+2}^n\alpha_k,$ 
we get
$$\overline{d}_{A}(E)\leq \sup_{k>N}\left(\frac{\Lambda_E^{\beta}(k)}{\varphi_{\beta}(k)}\right).$$ 
Hence letting $N\rightarrow +\infty,$ we obtain 
$\overline{d}_{A}(E)\leq \overline{d}_{B}(E)$.\\
The other inequality is obtained using the relations
$\underline{d}_A(E)=1-\overline{d}_A(\mathbb{N}\setminus E)$ and 
$\underline{d}_B(E)=1-\overline{d}_B(\mathbb{N}\setminus E).$
\end{proof}

From now on, we are interested in densities given by special admissible matrices given in Definition \ref{def1}. In this case, 
Lemma \ref{lemmastieltjes} leads to the following inequalities.

\begin{lemma}\label{lemmacomp} For every subset $E\subset \mathbb{N}$ and for any $0<r\leq r',$ $0<s\leq s'<1,$ $1<t\leq t',$ $2\leq l\leq l',$ we have
$$\underline{d}_{A_1}(E)= 
\underline{d}_{B_0}(E)\leq 
\underline{d}_{\tilde{B}_{l'}}(E)\leq \underline{d}_{\tilde{B}_{l}}(E)\leq 
\underline{d}_{B_t}(E)\leq \underline{d}_{B_{t'}}(E)$$
and 
$$\underline{d}_{B_{t'}}(E)\leq 
\underline{d}_{{A}_{s'}}(E)\leq \underline{d}_{{A}_{s}}(E)\leq 
\underline{d}_{C_{r'}}(E)\leq \underline{d}_{C_{r}}(E)\leq 
\underline{d}(E).$$
\end{lemma}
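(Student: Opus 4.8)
The plan is to derive the whole statement from Lemma~\ref{lemmastieltjes}, applying it once to each pair of consecutive terms in the two displayed chains. Recall that Lemma~\ref{lemmastieltjes} yields $\underline{d}_B(E)\le\underline{d}_A(E)$ for every $E\subset\mathbb{N}$ provided $A$ and $B$ are the admissible matrices associated with nonnegative weight sequences $(\alpha_k)$ and $(\beta_k)$ with $\sum_k\alpha_k=\sum_k\beta_k=+\infty$ and such that $(\alpha_k/\beta_k)$ is eventually decreasing to $0$. So, for each link $\underline{d}_{M}(E)\le\underline{d}_{M'}(E)$, the plan is to take for $M$ the matrix built from the \emph{larger} weight sequence (the $\beta$ of the lemma) and for $M'$ the one built from the \emph{smaller} one (the $\alpha$), and then to verify that the quotient of the two weight sequences tends to $0$ while being eventually decreasing. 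The divergence hypotheses are automatic, since each weight sequence of Definition~\ref{def1} either tends to $+\infty$ or equals $(k^r)$ with $r>0$; and whenever the two indices of a link coincide the two matrices are identical, so I may assume throughout that the indices are strictly ordered.

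The only link that is not of this form is the leftmost equality $\underline{d}_{A_1}(E)=\underline{d}_{B_0}(E)$: since $\log^0(k)\equiv1$, the weight sequences of $A_1$ and $B_0$ are $(e^k)_{k\ge1}$ and $(1,e^2,e^3,\dots)$, which differ only at $k=1$. Modifying finitely many weights perturbs both $\varphi_\alpha(n)$ and the numerators $\sum_{k\le n}\alpha_k\,\mathds{1}_E(k)$ only by bounded additive constants, while $\varphi_\alpha(n)\to+\infty$; hence the two quotients defining $\underline{d}_{A_1}(E)$ and $\underline{d}_{B_0}(E)$ differ by $o(1)$ and have the same $\liminf$.

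For every remaining link the two weight sequences have the form $e^{f(k)}$ and $e^{g(k)}$ (a power $k^r$ being written $e^{r\log k}$), so the quotient to be controlled is $e^{f(k)-g(k)}$, and the task reduces, in each case, to showing that $g(k)-f(k)$ is eventually positive, eventually increasing, and tends to $+\infty$. The exponent functions occurring are, in strictly decreasing order of growth at infinity: $k$; then $k/h_l(k)=k/\!\bigl(\log(k)\log^{(l)}(k)\bigr)$ for the integers $l\ge2$ (this family being increasing in $l$); then $k/\log^t(k)$ for $t>1$ (decreasing in $t$); then $k^s$ for $0<s<1$ (increasing in $s$); and finally $r\log(k)$ for $r>0$ (increasing in $r$). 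The order in which these families appear along the chain is exactly the order of their growth rates, which makes the choice of $\alpha$ and $\beta$ in each link unambiguous. The comparisons within a single block are immediate, and those linking two consecutive blocks are elementary as well: $h_{l'}(k)\to+\infty$, then $k^{1-s}/\log^t(k)\to+\infty$ (this is where $s<1$ enters), then $k^s/\log(k)\to+\infty$, then $\log(k)\to+\infty$. The one comparison needing a little care is $\tilde{B}_l$ against $B_t$ ($l\ge2$, $t>1$): one must check $h_l(k)=\log(k)\log^{(l)}(k)=o\bigl(\log^t(k)\bigr)=o\bigl(\log(k)\,(\log k)^{t-1}\bigr)$, i.e.\ that the iterated logarithm $\log^{(l)}(k)\le\log\log(k)$ is negligible against the positive power $(\log k)^{t-1}$; this holds since $\log\log(k)=o\bigl((\log k)^{\varepsilon}\bigr)$ for every $\varepsilon>0$. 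Finally, the eventual monotonicity of each difference $g-f$ follows from a short computation with derivatives. I do not expect a genuine obstacle here: the argument is a finite list of invocations of Lemma~\ref{lemmastieltjes}, the only mildly delicate point being the iterated-logarithm-versus-power estimate just described.
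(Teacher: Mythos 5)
Your proposal is correct and follows essentially the same route as the paper, which derives the whole chain from Lemma~\ref{lemmastieltjes} by comparing the weight sequences of Definition~\ref{def1} pairwise (the paper in fact gives no further detail than the invocation of that lemma). Your link-by-link verification of the growth order of the exponents, the finite-modification argument for $\underline{d}_{A_1}=\underline{d}_{B_0}$, and the $\log^{(l)}(k)=o\bigl((\log k)^{t-1}\bigr)$ estimate for the $\tilde{B}_l$ versus $B_t$ comparison are all accurate.
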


Moreover, observe that a subset $E$ of $\mathbb{N}$ possesses a strictly positive natural lower density 
if and only if it has a strictly positive lower $C_r$-density, for any $r>-1.$ 

\begin{lemma}\label{lemme_poly} Let $r>-1$. Then, for every subset 
$E\subset \mathbb{N}$ the following assertions are equivalent
\begin{enumerate}[(i)]
\item\label{firstpoly} $\underline{d}_{C_r}(E)>0$,
\item\label{secondpoly} $\underline{d}(E)>0.$
\end{enumerate}
\end{lemma}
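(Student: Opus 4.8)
The plan is to prove Lemma~\ref{lemme_poly} by applying Lemma~\ref{lemmacomp} (itself a consequence of Lemma~\ref{lemmastieltjes}) together with Lemma~\ref{LemmaDensInfCalc}. Since only the case $r > -1$ is claimed, I would split into the sub-cases $-1 < r \le 0$ and $r > 0$, and for each derive the two inequalities $\underline d_{C_r}(E) \le c_1 \underline d(E)$ and $\underline d(E) \le c_2\, \underline d_{C_r}(E)$ for suitable constants $c_1, c_2 > 0$ (or more simply, positivity of one density forcing positivity of the other).

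First I would observe that $C_0$ is exactly the Ces\`aro matrix, so $\underline d_{C_0} = \underline d$ by Example~\ref{examplelogdensity}\eqref{examplelogdensity1}; hence the case $r = 0$ is trivial. For $r > 0$, the implication \eqref{secondpoly}$\Rightarrow$\eqref{firstpoly} is immediate since $(k^r / k^0) = k^r$ is increasing, or rather I would use the already-established chain in Lemma~\ref{lemmacomp}: for $0 < r \le r'$ we have $\underline d_{C_{r'}}(E) \le \underline d_{C_r}(E) \le \underline d(E)$, so a positive $C_{r'}$-density gives a positive natural density, which handles one direction for all $r>0$. The remaining and real content is the reverse implication \eqref{firstpoly}$\Rightarrow$\eqref{secondpoly} when $r > 0$ (and the full equivalence when $-1 < r < 0$, where $C_r$ is \emph{sharper}, i.e.\ $\underline d_{C_r} \le \underline d$, so positivity of $\underline d_{C_r}$ trivially gives positivity of $\underline d$; here the nontrivial direction is $\underline d(E) > 0 \Rightarrow \underline d_{C_r}(E) > 0$).

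For the nontrivial direction I would use Lemma~\ref{LemmaDensInfCalc}: writing $E = \{n_1 < n_2 < \cdots\}$ and $\alpha_k = k^r$, we have $\varphi_\alpha(n) = \sum_{j=1}^n j^r \sim \frac{n^{r+1}}{r+1}$, so
\[
\underline d_{C_r}(E) = \liminf_{k\to\infty} \frac{\sum_{j=1}^k n_j^{\,r}}{\sum_{j=1}^{n_k} j^r}.
\]
If $\underline d(E) = \delta > 0$, then $\liminf_k k/n_k = \delta$, so for large $k$ we have $n_j \ge (\delta/2) j$ for all $j$ in a suitable tail, whence $\sum_{j=1}^k n_j^{\,r} \gtrsim (\delta/2)^r \frac{k^{r+1}}{r+1}$; also $n_k \le 2k/\delta$ for large $k$ (infinitely often, and in fact we only need an appropriate $\limsup/\liminf$ comparison), so $\sum_{j=1}^{n_k} j^r \sim \frac{n_k^{r+1}}{r+1} \le \frac{(2k/\delta)^{r+1}}{r+1}$ along that subsequence, giving a positive lower bound for the ratio. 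Care must be taken: the cleanest route is again to invoke Lemma~\ref{lemmastieltjes} directly with $\alpha_k = k^r$ (the sharper weight when $-1<r<0$) versus $\beta_k = 1$, noting $\alpha_k/\beta_k = k^r$ is decreasing to $0$ precisely when $r < 0$, which yields $\underline d(E) = \underline d_{B}(E) \le \underline d_{C_r}(E)$ for $-1<r<0$; and for $r>0$ one swaps the roles, taking $\alpha_k = 1$, $\beta_k = k^r$, so that $\alpha_k/\beta_k = k^{-r} \downarrow 0$, giving $\underline d_{C_r}(E) = \underline d_B(E) \le \underline d_A(E) = \underline d(E)$ and also $\overline d(E) \le \overline d_{C_r}(E)$, so that $\underline d_{C_r}(E) > 0$ forces, via an elementary comparison of the $\limsup$ and $\liminf$ expressions, $\underline d(E)>0$.

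The main obstacle is the reverse direction for $r > 0$: unlike the sharper-density cases, here Lemma~\ref{lemmastieltjes} only gives $\underline d_{C_r}(E) \le \underline d(E)$ and $\overline d(E) \le \overline d_{C_r}(E)$, not a lower bound on $\underline d(E)$ in terms of $\underline d_{C_r}(E)$. So I would argue by contradiction: suppose $\underline d(E) = 0$ but $\underline d_{C_r}(E) = \delta > 0$. Then there is a sequence $N_i \to \infty$ with $\#(E \cap [1,N_i])/N_i \to 0$; estimating $\sum_{k \le N_i} k^r \indicatrice E(k)$ by placing the (few) elements of $E$ as far to the right as possible in $[1,N_i]$ — i.e.\ among $\{N_i - \#(E\cap[1,N_i]) + 1, \dots, N_i\}$ — gives $\sum_{k\le N_i} k^r \indicatrice E(k) \le \#(E\cap[1,N_i]) \cdot N_i^{\,r}$, hence $(\varphi_\alpha(N_i))^{-1}\sum_{k \le N_i} k^r\indicatrice E(k) \lesssim (r+1)\,\frac{\#(E\cap[1,N_i])}{N_i} \to 0$, contradicting $\underline d_{C_r}(E) = \delta > 0$. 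This closes the loop, and assembling the pieces gives the stated equivalence for every $r > -1$. \boite
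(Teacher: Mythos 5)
Your overall strategy --- split on the sign of $r$, get one implication from Lemma \ref{lemmastieltjes}, and prove the other by hand via Lemma \ref{LemmaDensInfCalc} --- is exactly the paper's, and your treatment of the case $r>0$ is essentially correct: the implication $\underline d_{C_r}(E)>0\Rightarrow\underline d(E)>0$ follows from $\underline d_{C_r}\le\underline d$, and the converse follows from the bounds $n_j\ge j$ and $n_k\le 2k/\delta$ inserted into the quotient of Lemma \ref{LemmaDensInfCalc}, just as in the paper (your final contradiction argument is a correct but redundant second proof of the easy implication for $r>0$). Note, though, that your second paragraph systematically swaps the labels (i) and (ii): the chain $\underline d_{C_{r'}}\le\underline d_{C_r}\le\underline d$ gives \eqref{firstpoly}$\Rightarrow$\eqref{secondpoly}, not the reverse.

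The genuine gap is in the case $-1<r<0$. You assert that there $\underline d_{C_r}\le\underline d$, so that $\underline d_{C_r}(E)>0\Rightarrow\underline d(E)>0$ would be trivial; but the inequality goes the other way. As you yourself derive a few lines later from Lemma \ref{lemmastieltjes} (with $\alpha_k=k^r$, $\beta_k=1$, so $\alpha_k/\beta_k=k^r$ decreases to $0$), for $-1<r<0$ one has $\underline d(E)\le\underline d_{C_r}(E)\le\overline d_{C_r}(E)\le\overline d(E)$: the $C_r$-densities are squeezed \emph{between} the natural densities, so the trivial implication is \eqref{secondpoly}$\Rightarrow$\eqref{firstpoly}, and it is \eqref{firstpoly}$\Rightarrow$\eqref{secondpoly} that requires an argument. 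Your closing contradiction argument does not supply it: the estimate $\sum_{k\le N_i}k^r\indicatrice{E}(k)\le \#(E\cap[1,N_i])\,N_i^{\,r}$ rests on $k^r\le N_i^{\,r}$ for $k\le N_i$, which fails when $r<0$ (the extremal configuration for decreasing weights puts the elements of $E$ at the left, not the right), and you explicitly restrict that argument to $r>0$ in any case. The missing step is the paper's: if $\underline d_{C_r}(E)>0$, Lemma \ref{LemmaDensInfCalc} yields a constant $C>0$ with $\sum_{j=1}^{n_k}j^r\le C\sum_{j=1}^k (n_j)^r\le C\sum_{j=1}^k j^r$ (using $j\le n_j$ and $r<0$), and since the left and right sides are comparable to $(n_k+1)^{r+1}/(r+1)$ and $k^{r+1}/(r+1)$ respectively, with $r+1>0$, the sequence $(n_k/k)$ is bounded, i.e.\ $\underline d(E)>0$. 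Adding this argument closes the gap.
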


\begin{proof} Let $(n_k)\subset \mathbb{N}$ be the increasing sequence of elements of $E$.
We divide the proof in two cases.\\
\textit{Case $r\geq 0$:} 
Lemma \ref{lemmastieltjes} gives 
$\underline{d}_{C_r}(E)\leq \underline{d}(E),$ hence $(\ref{firstpoly})\Rightarrow(\ref{secondpoly}).$ For the other implication, assume that 
$\underline{d}(E)>0.$ This means that the sequence $\left(\frac{n_j}{j}\right)$ is bounded (see Lemma \ref{LemmaDensInfCalc}). 
We deduce that there exists an integer $M\geq 1$ such that for every $k\in\N$,  $k\leq n_k\leq Mk$ and
$$\sum_{j=1}^{n_k}j^r\leq \sum_{j=1}^{Mk}j^r\leq \frac{(Mk+1)^{r+1}}{r+1}\leq 
M^{r+1}\frac{(k+1)^{r+1}}{r+1}.$$
Using the fact that $\sum_{j=1}^{k}j^r\sim \frac{k^{r+1}}{r+1}\sim \frac{(k+1)^{r+1}}{r+1},$ as 
$k$ tends to $+\infty$ (cf Example \ref{examplelogdensity}) we deduce that there exists $C>0$ such that 
$$\sum_{j=1}^{n_k}j^r\leq CM^{r+1}\sum_{j=1}^{k}j^r.$$
Therefore we have 
$$\sum_{j=1}^{n_k}j^r\leq CM^{r+1}\sum_{j=1}^{k}(n_j)^r$$ and 
$\liminf_{k\rightarrow +\infty}\left(\frac{\sum_{j=1}^{k}(n_j)^r}{\sum_{j=1}^{n_k}j^r}\right)>0$ which is sufficient to conclude thanks to Lemma \ref{LemmaDensInfCalc}.\\
\textit{Case $-1<r < 0$:} As in the previous case, Lemma \ref{lemmastieltjes} gives the implication $(\ref{secondpoly})
\Rightarrow (\ref{firstpoly})$ because $\underline{d}_{C_r}(E)\geq \underline{d}(E).$ For the converse, assume that 
$\underline{d}_{C_r}(E)>0.$ According to Lemma \ref{LemmaDensInfCalc}, there exists $C>0$ such that
$$\sum_{j=1}^{n_k}j^r\leq C\sum_{j=1}^k (n_j)^r.$$ Using the inequality $j\leq n_j$  and since $r<0$, we get
$$\frac{(n_k+1)^{r+1}}{r+1}-\frac{1}{r+1}
\leq \sum_{j=1}^{n_k}j^r\leq C\sum_{j=1}^k (n_j)^r\leq 
C\sum_{j=1}^k j^r\leq
C\left(\frac{k^{r+1}}{r+1}-\frac{1}{r+1}+1\right).$$
The positivity of $r+1$ now ensures that the sequence $\left(\frac{n_k}{k}\right)$ is bounded and $\underline{d}(E)>0.$
\end{proof}

Finally let us also extend the definition of frequently hypercyclic operators using the general notion of $A$-densities introduced before. 

\begin{definition}\label{defiA}{\rm Let $A=\left(\alpha_k/\sum_{j=1}^n\alpha_j\right)$ be an admissible matrix. 
Using the notations of Section \ref{sec_intro}, an operator $T\in L(X)$ is said to be $A$-frequently hypercyclic if 
there exists $x\in X$ such that for any non-empty open set $U\subset X,$ the set $N(x,U)$ has positive lower $A$-density.
}
\end{definition}

\section{Frequent hypercyclicity criterion: classical construction}\label{const_classical} According to Lemma 
\ref{lemme_poly}, a frequently hypercyclic operator is necessarily $C_r$-frequently hypercyclic for any $r>-1.$ 
So, even if this is obvious, the Frequent Hypercyclicity Criterion allows to obtain the $B_r$-frequent hypercyclicity too. A careful examination 
of the well-known proof of this criterion leads to a more precise result. Indeed, in the classical proof of the Frequent Hypercyclicity Criterion 
the following constructive lemma plays a prominent role \cite[Lemma 9.5]{Grope}. 

\begin{lemma}\label{Lemgrope} There exist pairwise disjoint subsets $A(l,\nu),$ $l,\nu\geq 1,$ of 
$\mathbb{N}$ of positive lower density such that, for any $n\in A(l,\nu)$ and $m\in A(k,\mu)$, we 
have that $n\geq \nu$ and 
$$\vert n-m\vert\geq \nu+\mu\hbox{ if }n\neq m.$$
\end{lemma}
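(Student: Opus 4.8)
The plan is to build the sets $A(l,\nu)$ by a single combinatorial construction that simultaneously handles both the disjointness requirement and the spacing requirement $|n-m|\geq \nu+\mu$, and then to verify positive lower density afterward. First I would reindex the countable family of pairs $(l,\nu)\in\mathbb{N}\times\mathbb{N}$ by a bijection with $\mathbb{N}$, say enumerate them as $(l_1,\nu_1),(l_2,\nu_2),\dots$ with the property that each pair $(l,\nu)$ receives an index $p=p(l,\nu)$ which is at least $\nu$ — this can be arranged because any enumeration can be slowed down. The key construction is arithmetic-progression-based: for each index $p$ I would try to place $A$ at the residue class of some large modulus. Concretely, set $N_p = 2^{p+1}$ (or any sufficiently fast-growing sequence), and let $A$ associated to the $p$-th pair be contained in $\{n : n\equiv r_p \pmod{N_p}\}$ for a carefully chosen offset $r_p$, intersected with a tail $\{n\geq c_p\}$. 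The point of using a geometric scale of moduli is that two progressions with moduli $N_p < N_q$ ($p\neq q$) that are suitably offset will have all their elements at distance at least (roughly) $N_p/2 \geq 2^p$ apart, which dominates $\nu_p+\nu_q$ once the enumeration guarantees $\nu_p\leq p$; and disjointness comes for free by choosing the offsets to avoid collisions.

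More precisely, the standard trick (as in Grosse-Erdmann–Peris, Lemma 9.5) is the following: take the sets
$$A(l,\nu) \;=\; \bigl\{\, (2k+1)\,2^{p}\;:\; k\geq 0 \,\bigr\}\ \cap\ \{\,n\geq \nu\,\},$$
after identifying $(l,\nu)$ with its index $p=p(l,\nu)\geq \nu$ under the fixed enumeration. Here I would check three things in order. (1) \emph{Disjointness:} the sets $\{(2k+1)2^p : k\geq 0\}$, $p\geq 1$, partition $\mathbb{N}\setminus\{0\}$ by writing each integer uniquely as an odd number times a power of two; intersecting with tails keeps them disjoint. (2) \emph{Lower bound $n\geq\nu$:} immediate from the definition, since we intersect with $\{n\geq\nu\}$ and since $p\geq\nu$ we in fact even get $n\geq 2^p\geq\nu$ for the smallest element. (3) \emph{Spacing:} take $n\in A(l,\nu)$ with index $p$ and $m\in A(k,\mu)$ with index $q$, $n\neq m$. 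If $p=q$ then $n,m$ are distinct elements of $\{(2k+1)2^p\}$, hence differ by a multiple of $2^{p+1}$, so $|n-m|\geq 2^{p+1}\geq 2^p+2^q\geq \nu+\mu$ using $p\geq\nu$, $q\geq\mu$. If $p\neq q$, say $p<q$, then $n$ is an odd multiple of $2^p$ while $m$ is a multiple of $2^{q}$, hence of $2^{p+1}$; so $n-m$ is an odd multiple of $2^p$, in particular nonzero and with $|n-m|\geq 2^p$. That alone is not quite $\nu+\mu$, so here I would instead reorganize the enumeration so that $p=p(l,\nu)$ grows fast enough that $2^{\min(p,q)}\geq \nu+\mu$ whenever the larger index is involved; e.g. demand $p(l,\nu)\geq 2^{\,l+\nu}$, which forces $2^{p}\geq \nu + \mu$ as soon as $p$ is the smaller of the two indices and $q>p$ corresponds to some $(k,\mu)$ with necessarily $\mu\leq q$ — and then a second elementary estimate $|n-m|\ge 2^{p}\ge\nu+\mu$ closes it. (This bookkeeping — choosing the enumeration so both the $p=q$ and $p\ne q$ cases yield $\ge\nu+\mu$ — is the main obstacle, and it is purely a matter of padding the enumeration.)

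Finally, \emph{positive lower density:} the set $\{(2k+1)2^p : k\geq 0\}$ has natural density exactly $2^{-(p+1)}$, since among $\{1,\dots,N\}$ the odd multiples of $2^p$ number $\sim N/2^{p+1}$; intersecting with the tail $\{n\geq\nu\}$ removes only finitely many elements and does not change the density. Hence $\underline{d}\bigl(A(l,\nu)\bigr)=2^{-(p(l,\nu)+1)}>0$ for every $l,\nu$. This completes the verification of all the asserted properties, and the proof is a matter of assembling these four elementary checks after fixing the enumeration $p(l,\nu)$ with the required growth.
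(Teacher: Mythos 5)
Your construction fails at exactly the point you flagged as ``bookkeeping'': the case $p\neq q$ cannot be repaired by any choice of enumeration. If $p<q$, take $m=2^q\in A$ (index $q$) and $n=2^q+2^p=2^p(2^{q-p}+1)\in A$ (index $p$); then $|n-m|=2^p$ exactly, and such pairs occur with positive density (for every odd $j$, the pair $j2^q$ and $j2^q+2^p$), so no tail intersection removes them. You would therefore need $2^p\geq \nu(p)+\mu(q)$ for \emph{every} $q>p$. But any bijective enumeration of $\mathbb{N}\times\mathbb{N}$ places all but finitely many pairs at indices $>p$, hence the values $\mu(q)$ with $q>p$ are unbounded, and the inequality $2^p\geq\nu(p)+\mu(q)$ must fail for some $q>p$. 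The obstruction is structural: your sets $A(l,\nu)$ union to (essentially) all of $\mathbb{N}$, and a family whose union is cofinite in $\mathbb{N}$ cannot satisfy a separation condition $|n-m|\geq\nu+\mu$ in which the required gaps are unbounded. (A minor additional slip: the classes $\{(2k+1)2^p\}$ partition $\mathbb{N}$ only if you allow $p=0$; for $p\geq1$ you only cover the even integers.)

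What is missing is a second, ``spreading'' step, which is the actual content of the paper's proof. There one first partitions the \emph{index set} $\mathbb{N}$ into sets $I(l,\nu)$ (using the position and length of the first block of ones in the dyadic expansion --- a cousin of your $2$-adic valuation classes), sets $\delta_k=\nu$ for $k\in I(l,\nu)$, and then re-embeds the indices via
$$n_k=2\sum_{i=1}^{k-1}\delta_i+\delta_k,$$
so that consecutive gaps satisfy $n_{k+1}-n_k=\delta_k+\delta_{k+1}$ and hence $|n_i-n_j|\geq\delta_i+\delta_j$ automatically for all $i\neq j$. The sets $A(l,\nu)=\{n_k:k\in I(l,\nu)\}$ then inherit the separation by construction, and positive lower density survives because $n_k=O(k)$ (in fact $n_k\sim 4k$) and the $I(l,\nu)$ have positive density. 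Your proposal keeps only the first (partition) step and omits the re-embedding, which is why the spacing condition cannot be met.
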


The proof of this result is based on a specific partition of $\mathbb{N}$ using the dyadic representation 
$n=\sum_{j=0}^{+\infty}a_j2^j=(a_0,a_1,\dots)$ of any positive integer. Actually the authors define the sets 
$I(l,\nu),$ $l,\nu\geq 1,$ as the sets of all $n\in\mathbb{N}$ whose dyadic representation has the form 
$n=(0,\dots,0,1,\dots,1,0,*)$ with $l-1$ leading zeros, exactly followed by $\nu$ ones, then one zero and an 
arbitrary remainder. Let $\delta_k=\nu,$ if $k\in I(l,\nu)$ for some $l\geq 1.$  
Then they construct the following strictly increasing sequence $(n_k)$ of positive integers by setting 
$$n_k=2\sum_{i=1}^{k-1}\delta_i +\delta_k,\quad k\geq 1.$$
This construction clearly ensures that for any integers $i,j,$ with 
$i\ne j,$ the separation condition stated in Lemma \ref{Lemgrope} holds, that is 
$$\vert n_i-n_j\vert\geq \delta_i+\delta_j.$$
Finally they define the sets $A(l,\nu)=\{n_k;k\in I(l,\nu)\}$ and they prove that these sets have positive lower density since $(n_k)$ does and the sets $I(l,\nu)$ 
are arithmetic sequences. Actually we are going to prove that the sequence $(n_k)$ has positive lower $B_2$-density. To do that, we start by giving an exact formula for this sequence 
that will allow to obtain easily its asymptotic behavior. We obtain the following result, whose proof will be given later (see Lemma \ref{retrouve_lemme} below). 

\begin{lemma}\label{lemmaestimgre}
If $k=2^n+\sum_{i=0}^{n-1}\alpha_i2^i$ with $\alpha_i\in\{0;1\}$ for every $0\leq i\leq n-1$, then
$$n_k=4k-2\left(\sum_{i\in I_k} L_i(i+1)\right)-\delta_k,$$
where $I_k$ stands for the set of integers $i$ such that $\alpha_i$ is the first non-zero integer of a block 
(of consecutive non-zero coefficients) having length $L_i$ in the dyadic decomposition of $k$.
\end{lemma}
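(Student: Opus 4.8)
The plan is to prove the formula by unwinding the definition $n_k = 2\sum_{i=1}^{k-1}\delta_i + \delta_k$, so the real work is to find a closed form for the partial sum $S(k-1) := \sum_{i=1}^{k-1}\delta_i$ when $k = 2^n + \sum_{i=0}^{n-1}\alpha_i 2^i$. First I would recall that $\delta_i = \nu$ exactly when $i$ lies in some $I(l,\nu)$, i.e. when the dyadic expansion of $i$ has the shape (some leading zeros)$1\cdots1$($\nu$ ones)$0*$; equivalently, reading the dyadic digits of $i$ from the most significant end, $\nu$ is the length of the \emph{first} maximal block of consecutive $1$'s. So $\delta_i$ counts the length of the leading run of ones in $i$. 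The key combinatorial step is then to evaluate $\sum_{i=1}^{m}\delta_i$. I would do this by grouping the integers $i\le m$ according to the position $l$ of their leading one and the length $\nu$ of that leading block, and summing $\nu$ times the number of such $i$ not exceeding $m$; an alternative, cleaner bookkeeping is to write $\delta_i = \sum_{j\ge 0}\mathbf 1[\text{the top }j+1\text{ digits of }i\text{ are all }1]$ and swap the order of summation, which turns $\sum_{i\le m}\delta_i$ into a sum over $j$ of the number of $i\le m$ whose $j+1$ leading dyadic digits are all ones.

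Carrying this out, the count of $i$ in $[1,m]$ whose top $j+1$ digits are ones splits into a "full ranges" part — intervals of the form $[2^a + \cdots]$ that are entirely below $m$ — contributing geometric-type sums that telescope nicely, and a "boundary" part coming from the particular digits of $m$. It is precisely the boundary part that produces the correction term $\sum_{i\in I_k} L_i(i+1)$: each maximal block of ones in the dyadic decomposition of $m=k$ (or rather of $k-1$, after a short reconciliation of the off-by-one coming from using $k-1$ versus $k$ and from the term $\delta_k$) starting at position $i$ and having length $L_i$ contributes, when one accumulates the partial leading-run lengths across that block, exactly a term proportional to $L_i(i+1)$. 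The clean statement in the lemma — with the blocks read off from $k$ itself and the single extra $\delta_k$ subtracted at the end — is what one gets after collecting these contributions and simplifying $2S(k-1) + \delta_k = 4k - 2\sum_{i\in I_k}L_i(i+1) - \delta_k$.

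Concretely, the steps I would carry out are: (1) restate $\delta_i$ as the length of the leading block of ones in the dyadic expansion of $i$, and verify the indicator decomposition $\delta_i=\sum_{j\ge0}\mathbf 1[i \bmod 2^{\lfloor\log_2 i\rfloor+1}\text{ has top }j{+}1\text{ bits equal to }1]$; (2) sum over $i\le m$ and interchange sums, reducing everything to counting integers in a dyadic window with a prescribed leading pattern; (3) evaluate those counts by peeling the binary digits of $m$ from the top, separating complete dyadic blocks (which give telescoping geometric sums summing to something like $4m$ up to lower-order terms) from the residual contribution tied to the runs of ones in $m$; (4) identify the residual contribution with $\sum_{i\in I_k}L_i(i+1)$ and reconcile the index shift between $m=k-1$ and the data of $k$, absorbing the leftover into the $-\delta_k$ term; (5) assemble $n_k = 2S(k-1)+\delta_k$ and simplify.

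The main obstacle I anticipate is step (3)–(4): correctly handling the boundary digits of $k$ so that the messy sum of partial run-lengths collapses to the elegant $\sum_{i\in I_k}L_i(i+1)$, and in particular getting the off-by-one bookkeeping exactly right (whether a block's contribution is $L_i(i+1)$, $L_i\cdot i$, or $\binom{L_i+1}{2}$-type, and how the $+1$ appears). A careful induction on $k$ — or on the number of ones in its expansion, toggling one digit at a time and tracking how $n_k$ changes — may be the safest way to pin down the constants; indeed the promised later proof via Lemma \ref{retrouve_lemme} likely proceeds by such an induction rather than the direct double-sum computation sketched here, and if the direct route gets unwieldy I would fall back on verifying the formula inductively, checking the base case $k=1$ (where $I_1=\varnothing$, $\delta_1=1$, giving $n_1 = 4-0-1 = 3 = 2\delta_1+\delta_1$... up to the convention) and the step where appending or flipping a bit of $k$ changes both sides consistently.
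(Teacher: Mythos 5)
Your reduction to computing $S(k-1)=\sum_{i=1}^{k-1}\delta_i$ is the right starting point, but the key identity you build on misreads the definition of $\delta_i$. In the paper's convention the dyadic representation is the tuple $(a_0,a_1,\dots)$ with $n=\sum_j a_j2^j$, so the ``$l-1$ leading zeros'' and the block of $\nu$ ones in the definition of $I(l,\nu)$ sit at the \emph{least} significant end: $\delta_i$ is the length of the maximal block of ones beginning at the lowest set bit of $i$, not the length of the run of ones read from the most significant end. For $i=13=(1,0,1,1)$ these differ ($\delta_{13}=1$, whereas your reading gives $2$), and the discrepancy is not cosmetic: with your reading one gets $\sum_{i=1}^{12}\delta_i=17$ and hence $n_{13}=36$, while the correct values are $18$ and $n_{13}=37=4\cdot13-2(1\cdot1+2\cdot3)-1$, matching the lemma. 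Consequently your indicator decomposition $\delta_i=\sum_{j\ge0}\mathbf{1}[\text{top }j{+}1\text{ bits of }i\text{ equal }1]$ and the ensuing count of integers with a prescribed \emph{leading} pattern compute the wrong quantity; the correct version must condition on the position of the lowest set bit, which changes both the ``full range'' and the ``boundary'' parts of your analysis. (Your base case is also off: $n_1=\delta_1=1$ and $I_1=\{0\}$ with $L_0=1$, so the formula reads $4-2-1=1$, not $4-0-1=3$.)

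Beyond this, the part you yourself identify as the main obstacle --- showing that the boundary contribution collapses to $\sum_{i\in I_k}L_i(i+1)$ --- is exactly where the work lies, and it is not carried out. The paper does it (in Section \ref{furth_result}, in greater generality) by a recursive block decomposition rather than a direct double sum: an exact count $\#\{0\le l<2^N:\ \delta_l=j\}=2^{N-j}$ gives $n_{2^N}$ in closed form (Lemma \ref{lemfromuleam}); the self-similarity $\delta_i=\delta_{i-2^n}$, valid when the top bit of $i$ is not attached to its lowest block of ones, yields a shift relation (Lemma \ref{lem2nfdec}); and an induction over the blocks of ones of $k$ assembles the formula (Lemmas \ref{lemdec1}--\ref{lemsumn}), which specialises to the present statement in Lemma \ref{retrouve_lemme}. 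Your proposed fallback, an induction that ``flips one bit at a time'' of $k$, is essentially this shift relation, i.e.\ the hard step itself, and would still have to be proved; as written, the proposal does not yet contain a proof.
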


From this lemma, we deduce the following estimate using the same notations. 

\begin{proposition}\label{prop_dens_b2} The sequence $(n_k)$ satisfies the following estimate $n_k-4k=O(\log ^2(k)).$ Moreover 
this estimate is optimal in the following sense: there exists an increasing sub-sequence $(\lambda_n)$ of positive integers such that the sequence $\frac{n_{\lambda_j}-4\lambda_j}{\log^2 (\lambda_j)}$ converges to a non-zero real number. 
\end{proposition}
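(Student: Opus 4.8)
The plan is to extract everything from the closed formula of Lemma~\ref{lemmaestimgre} (which we may take for granted, its proof being postponed to Lemma~\ref{retrouve_lemme}). Writing $k=2^n+\sum_{i=0}^{n-1}\alpha_i2^i$, so that $n=\lfloor\log_2 k\rfloor$ and $k$ has exactly $n+1$ binary digits, the formula reads
$$4k-n_k=2\sum_{i\in I_k}L_i(i+1)+\delta_k .$$
In particular $4k-n_k\ge 0$, so it suffices to bound the right-hand side from above in general and, for the optimality claim, to make it as large as possible along a suitable subsequence.

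For the $O(\log^2 k)$ estimate I would argue as follows. Every $i\in I_k$ is the starting position of a block of consecutive $1$'s among the $n+1$ digits of $k$, so $i+1\le n+1$; the blocks are pairwise disjoint and are made of digit positions, hence $\sum_{i\in I_k}L_i\le n+1$; and $\delta_k$ is the length of one such block, so $\delta_k\le n+1$. Therefore
$$0\le 4k-n_k\le 2(n+1)\sum_{i\in I_k}L_i+\delta_k\le 2(n+1)^2+(n+1),$$
which is $O(n^2)=O(\log^2 k)$ since $n\le\log_2 k$.

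For the optimality statement I would test the formula on the family $\lambda_j:=\sum_{i=0}^{j}4^i=\frac{4^{j+1}-1}{3}$, whose binary expansion carries a $1$ exactly at the even positions $0,2,\dots,2j$ and a $0$ at every odd position. Each of these $1$'s is isolated, hence forms a block of length $1$, so $I_{\lambda_j}=\{0,2,\dots,2j\}$ with $L_i=1$ for all $i\in I_{\lambda_j}$, giving
$$\sum_{i\in I_{\lambda_j}}L_i(i+1)=\sum_{t=0}^{j}(2t+1)=(j+1)^2 .$$
Since the two lowest digits of $\lambda_j$ are $1,0$, we have $\lambda_j\in I(1,1)$ and $\delta_{\lambda_j}=1$, so $4\lambda_j-n_{\lambda_j}=2(j+1)^2+1$. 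Finally $\log\lambda_j=2(j+1)\log 2+O(1)$, whence $\log^2\lambda_j=4(j+1)^2\log^2 2\,(1+o(1))$ and
$$\frac{n_{\lambda_j}-4\lambda_j}{\log^2\lambda_j}\longrightarrow -\frac{1}{2\log^2 2}\neq 0 .$$

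The only real subtlety — and the place I would be most careful — is the choice of the extremal family. The naive guess $k=2^n-1$ (all digits equal to $1$) produces a single block and only yields $4k-n_k=O(\log k)$, far below the bound; saturating the order $\log^2 k$ genuinely requires $\asymp\log k$ disjoint blocks spread over positions up to $\asymp\log k$, which forces (up to lower-order modifications) the alternating pattern used above. One also has to respect the bookkeeping of blocks in Lemma~\ref{lemmaestimgre}, in particular whether the leading digit $2^n$ is counted as its own block; but any such convention alters $\sum_{i\in I_k}L_i(i+1)$ by at most $O(\log k)$, hence does not affect either the $O(\log^2 k)$ bound or the value of the limit.
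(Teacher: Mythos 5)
Your proof is correct and follows essentially the same route as the paper: both start from the closed formula of Lemma \ref{lemmaestimgre}, bound $\sum_{i\in I_k}L_i(i+1)$ by $O((\log_2 k)^2)$ (the paper via a telescoping estimate giving $(l+1)^2$, you via the slightly cruder $2(n+1)\sum_i L_i$, which is just as good), and exhibit optimality on the very same subsequence $\lambda_j=\sum_{i=0}^{j}4^{i}$ with the same limit $-1/(2\log^2 2)$.
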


\begin{proof} According to Lemma \ref{lemmaestimgre}, we have, for $k=2^l+\sum_{i=0}^{l-1}\alpha_i2^i$ with $\alpha_i\in\{0;1\}$ for every $0\leq i\leq l-1$, 
$$n_k=4k-2\left(\sum_{i \in I_k}L_i (i+1)\right)-\delta_k,$$
where $I_k=\{i\in\mathbb{N};\ \alpha_{i}\ne 0\hbox{ and }\alpha_{i-1}=0\},$ with the conventions $\alpha_{-1}=0$, $\alpha_{l}=1$ and for $i\in I_k,$ 
$L_i=\min\{j; \alpha_{i+j}=0\}.$ 
Obviously we deduce
$$n_k\leq 4k-2\log_2(k)-1.$$ 
Notice that we have the equality $n_k=4k-2(\log_2(k)+1)-1$ for $k=2^l.$ 
\\On the other hand, we can write 
$$\sum_{i\in I_k}L_i (i+1)=\sum_{i_1<i_2<\dots <i_{m_k}}L_{i_j} (i_{j}+1),$$
where $I_k=\{i_1<i_2<\dots <i_{m_k}\}.$ Observe that we have
$$i_n+L_{i_n}+1\leq i_{n+1}\hbox{ for }n=1,\dots,m_{k}-1\hbox{ and }L_{i_{m_{k}}}=l-i_{m_k}+1.$$ 
Since $i_{m_k}\leq l,$ we get
$$\sum_{i_1<i_2<\dots <i_{m_k}}L_{i_j} (i_{j}+1)\leq 
\left(\sum_{j=1}^{m_{k}-1}(i_{j+1}-(i_j+1))(i_j+1)\right)+(l+1-i_{m_k})(i_{m_k}+1)\leq (l+1)^2.$$ 
By construction we have 
$$\delta_k\leq \log_2(k)+1.$$
Since $\log_2(k)\leq l\leq \log_2(k)+1,$ we conclude 
$$ 4k-2(\log_2(k) +2)^2- \log_2(k)-1\leq n_k\leq 4k-2\log_2(k)-1$$
and the estimate $n_k-4k=O(\log^2k)$ holds. 
Finally let us consider $\lambda_j=\sum_{l=0}^j2^{2l}.$ An easy calculation gives 
$$n_{\lambda_j}=4\lambda_j-2\sum_{l=0}^j(2l+1)-1=4\lambda_j-2j^2-4j-3.$$
Since $\lambda_j=(4^{j+1}-1)/3,$ the sequence $\left(\frac{n_{\lambda_j}-4\lambda_j}{\log^2(\lambda_j)}\right)$ converges to a non-zero real number.
\end{proof}

We now prove that the sequence $(n_k)$ constructed above not only has positive lower density but has also positive lower 
$B_2$-density.

\begin{lemma} \label{lemma_b2} We have $\underline{d}_{B_2}((n_k))>0.$

\end{lemma}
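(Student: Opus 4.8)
The plan is to use Lemma \ref{LemmaDensInfCalc} applied to the admissible matrix $B_2=(\alpha_k/\sum_{j=1}^n\alpha_j)$ with $\alpha_1=1$, $\alpha_k=e^{k/\log^2 k}$ for $k\geq 2$. First I would record the asymptotics of the summatory function from Example \ref{examplelogdensity}(\ref{examplelogdensity6}), namely $\varphi_\alpha(k)\sim \log^2(k)\,e^{k/\log^2 k}$, and observe that $(\alpha_n/\varphi_\alpha(n))\to 0$, so the hypotheses of Lemma \ref{LemmaDensInfCalc} are satisfied. Hence $\underline{d}_{B_2}((n_k))=\liminf_{k\to+\infty}\big(\sum_{j=1}^{k}\alpha_{n_j}/\sum_{j=1}^{n_k}\alpha_j\big)=\liminf_{k\to+\infty}\big(\sum_{j=1}^{k}\alpha_{n_j}/\varphi_\alpha(n_k)\big)$, and it suffices to bound this ratio below by a positive constant.

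The key input is Proposition \ref{prop_dens_b2}: we have $n_k=4k+O(\log^2 k)$, and more precisely $4k-2(\log_2 k+2)^2-\log_2 k-1\leq n_k\leq 4k-2\log_2 k-1$. The idea is that, since $\varphi_\alpha$ grows essentially like $e^{k/\log^2 k}$ which is slowly varying at this scale, the additive $O(\log^2 k)$ perturbation of the argument $n_k$ around $4k$ produces only a bounded multiplicative distortion. Concretely I would estimate $\varphi_\alpha(n_k)$ from above: using $n_k\leq 4k$ together with monotonicity of $\varphi_\alpha$, and the asymptotic $\varphi_\alpha(4k)\sim \log^2(4k)e^{4k/\log^2(4k)}$, one gets $\varphi_\alpha(n_k)\leq \varphi_\alpha(4k)\leq C\log^2(k)\,e^{4k/\log^2(4k)}$ for a constant $C$ and $k$ large. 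For the lower bound on the numerator, I would keep only a block of the largest indices $n_j$ with $j$ close to $k$: since $n_j\geq j$ and the weights are increasing, $\sum_{j=1}^k \alpha_{n_j}\geq \sum_{j=\lfloor k/2\rfloor}^{k}\alpha_{n_j}\geq (k/2)\,\alpha_{n_{\lfloor k/2\rfloor}}\geq (k/2)\,e^{(k/2)/\log^2(k/2)}$, using $n_{\lfloor k/2\rfloor}\geq \lfloor k/2\rfloor$. Comparing, the ratio is at least of order $k\,e^{(k/2)/\log^2(k/2)-4k/\log^2(4k)}/\log^2 k$, and the exponent here is $\sim -\tfrac{7k}{2\log^2 k}\to -\infty$, so this crude splitting is \emph{not} good enough.

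So the real work — and the main obstacle — is to choose the retained block of indices more carefully, keeping $j$ in a window $[k-w_k,\,k]$ whose width $w_k$ is small compared to $k$ but large enough that $w_k\,\alpha_{n_{k-w_k}}$ dominates $\varphi_\alpha(n_k)$. For $j$ in such a window, $n_j\geq j\geq k-w_k$ gives $\alpha_{n_j}\geq e^{(k-w_k)/\log^2(k-w_k)}$, and since $n_j\geq n_{k-w_k}\geq 4(k-w_k)-O(\log^2 k)$ by the lower bound in Proposition \ref{prop_dens_b2}, one even has $\alpha_{n_j}\geq \exp\big((4k-O(w_k)-O(\log^2 k))/\log^2(4k)\big)$, matching the order of $e^{4k/\log^2(4k)}$ up to a factor $\exp(-O(w_k)/\log^2 k - O(1))$ provided $w_k=o(k/\log^2 k\cdot \text{something})$; taking for instance $w_k=\lfloor \log^3 k\rfloor$ makes $w_k/\log^2 k=\log k\to\infty$, which is still too lossy, whereas $w_k$ bounded loses the needed linear factor $k$ against the $\log^2 k$ in $\varphi_\alpha(n_k)$. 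The resolution is to exploit that $n_k$ is \emph{above} its own asymptotic leading term only by $-O(\log^2 k)$, i.e. $n_{k-w_k}\geq n_k - (4w_k + O(\log^2 k))$, so $\alpha_{n_j}\geq \alpha_{n_k}\exp\big(-(4w_k+O(\log^2 k))/\log^2(n_k)\big)$; choosing $w_k=\lfloor \log^2 k\rfloor$ gives $\sum_{j>k-w_k}\alpha_{n_j}\geq c\,\log^2(k)\,\alpha_{n_k}$, while $\varphi_\alpha(n_k)=\sum_{j=1}^{n_k}\alpha_j\leq C\log^2(n_k)\,\alpha_{n_k}\leq C'\log^2(k)\,\alpha_{n_k}$ because $\varphi_\alpha(m)\sim \log^2(m)\,\alpha_m$ and the weights near $m=n_k$ are comparable. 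Dividing, $\underline{d}_{B_2}((n_k))\geq c/C'>0$. I would present this in the order: (1) reduce to the $\liminf$ formula via Lemma \ref{LemmaDensInfCalc}; (2) record $\varphi_\alpha(m)\sim \log^2(m)\,e^{m/\log^2 m}$ and hence $\varphi_\alpha(m)\leq C\log^2(m)\,\alpha_m$; (3) use Proposition \ref{prop_dens_b2} to get $n_{k-w_k}\geq n_k-4w_k-O(\log^2 k)$ with $w_k=\lfloor\log^2 k\rfloor$; (4) lower-bound the numerator by the block sum over $j\in(k-w_k,k]$ and combine. The delicate point throughout is that everything must be controlled at the exponential scale $e^{m/\log^2 m}$, where ratios $\alpha_{m'}/\alpha_m$ with $|m'-m|=O(\log^2 m)$ are bounded away from $0$ and $\infty$ — this is precisely why the exponent was chosen to be $2$ (any exponent $r\le 2$ would work in the same way, but $r=2$ is what the sequence $(n_k)$ delivers by Proposition \ref{prop_dens_b2}).
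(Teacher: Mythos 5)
Your proposal is correct and takes essentially the same route as the paper: both reduce to the ratio formula of Lemma \ref{LemmaDensInfCalc} (with denominator $\varphi_\alpha(n_k)\sim\log^2(n_k)e^{n_k/\log^2(n_k)}$), feed in the two-sided estimate $n_k=4k-O(\log^2 k)$ from Proposition \ref{prop_dens_b2}, and exploit that shifting the argument of $e^{m/\log^2 m}$ by $O(\log^2 m)$ changes it by only a bounded multiplicative factor. The sole difference is the last step --- the paper evaluates the full numerator sum asymptotically by summation by parts, obtaining the limit $e^{-C}/4$, whereas you retain only the last $\lfloor\log^2 k\rfloor$ terms and bound each below by a constant multiple of $\alpha_{n_k}$ --- a minor technical variation that yields the same positive lower bound.
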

\begin{proof} Using (\ref{examplelogdensity6}) from Example \ref{examplelogdensity}, we have 
$$\underline{d}_{B_2}(n_k)=\liminf_{k\rightarrow +\infty}\left(\frac{\sum_{j=1}^{k}e^{n_j/\log^{2}(n_j)}}
{\log^{2}(n_k)e^{n_k/\log^{2}(n_k)}}\right).$$ 
According to Proposition \ref{prop_dens_b2}, there exists a constant $C>0$ such that, for $N$ large enough, 
$$\frac{\sum_{j=N}^{k}e^{n_j/\log^{2}(n_j)}}
{\log^{2}(n_k)e^{n_k/\log^{2}(n_k)}}\geq \frac{\sum_{j=N}^{k}e^{(4j-C\log^2(j))/\log^{2}(4j-C\log^2(j))}}
{\log^{2}(4k)e^{4k/\log^{2}(4k)}}.$$
A summation by parts gives
$$\sum_{j=N}^{k}e^{(4j-C\log^2(j))/\log^{2}(4j-C\log^2(j))}\sim \frac{\log^2(k)}{4}
e^{(4k-C\log^2(k))/\log^{2}(4k-C\log^2(k))},\hbox{ as }k\rightarrow +\infty.$$ 
Finally, a similar computation as those needed for Example \ref{examplelogdensity}, yields 
$$\frac{\sum_{j=N}^{k}e^{(4j-C\log^2(j))/\log^{2}(4j-C\log^2(j))}}
{\log^{2}(4k)e^{4k/\log^{2}(4k)}}\sim \frac{e^{-C}}{4},\hbox{ as }k\rightarrow +\infty,$$ 
which finishes the proof.
\end{proof}

Lemma \ref{lemma_b2} allows us to show that the Frequent Hypercyclicity Criterion gives a strengthened result.

\begin{proposition}\label{main_prop} Let $T$ be an operator on a separable Fr\'echet space $X.$ If there is a dense 
subset $X_0$ of $X$ and a map $S:X_0\rightarrow X_0$ such that, for any $x\in X_0,$ 
\begin{enumerate}[(i)]
\item $\displaystyle\sum_{n=0}^{+\infty}T^nx$ converges unconditionally,
\item $\displaystyle\sum_{n=0}^{+\infty}S^nx$ converges unconditionally,
\item $TSx=x,$
\end{enumerate}
then $T$ is $B_2$-frequently hypercyclic. 
\end{proposition}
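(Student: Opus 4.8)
The plan is to mimic the classical proof of the Frequent Hypercyclicity Criterion (as in \cite[Theorem 9.9]{Grope}), but to replace the appeal to Lemma \ref{Lemgrope} with the sharper information provided by Lemma \ref{lemma_b2}. First I would recall the combinatorial skeleton: using the sets $A(l,\nu)$ from Lemma \ref{Lemgrope}, fix a dense sequence $(y_p)_{p\geq 1}$ in $X_0$, and for each $p$ use condition (iii) together with the unconditional convergence in (i) and (ii) to choose the parameter $\nu=\nu(p)$ large enough so that the ``tails'' $\sum_{n\in F}T^n S^{n}$-type contributions are small uniformly over index sets $F$ with $\min F\geq \nu(p)$; this is exactly where the separation condition $|n-m|\geq \nu+\mu$ in Lemma \ref{Lemgrope} is used, to guarantee that when one builds the candidate vector
$$x=\sum_{p\geq 1}\ \sum_{n\in A(l(p),\nu(p))} S^{n} y_p,$$
the series converges and, for each $p$, $T^{n}x$ is close to $y_p$ for every $n\in A(l(p),\nu(p))$. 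The only new ingredient needed is that each return set $N(x,U)$ contains some $A(l(p),\nu(p))$ and that this set has positive lower $B_2$-density.

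So the key step is to upgrade the conclusion ``$A(l,\nu)$ has positive lower density'' to ``$A(l,\nu)$ has positive lower $B_2$-density.'' Here is how I would do it. By construction $A(l,\nu)=\{n_k : k\in I(l,\nu)\}$ where $(n_k)$ is the sequence of Lemma \ref{lemmaestimgre} and $I(l,\nu)$ is an arithmetic progression, say $I(l,\nu)=\{a+bj : j\geq 0\}$ for suitable $a=a(l,\nu)$, $b=b(l,\nu)\geq 1$. By Lemma \ref{lemma_b2} we already know $\underline{d}_{B_2}((n_k))>0$, and by Lemma \ref{LemmaDensInfCalc} applied to the weight $\alpha_k=e^{k/\log^2 k}$ this is equivalent to
$$\liminf_{k\to\infty}\frac{\sum_{j=1}^{k}\alpha_{n_j}}{\sum_{j=1}^{n_k}\alpha_j}>0.$$
Now for $A(l,\nu)$, writing its increasing enumeration as $(n_{a+bj})_{j\geq 0}$, Lemma \ref{LemmaDensInfCalc} gives
$$\underline{d}_{B_2}(A(l,\nu))=\liminf_{m\to\infty}\frac{\sum_{j=0}^{m}\alpha_{n_{a+bj}}}{\sum_{i=1}^{n_{a+bm}}\alpha_i}.$$
Since $n_k=4k+O(\log^2 k)$ by Proposition \ref{prop_dens_b2}, one has $\alpha_{n_k}=e^{n_k/\log^2 n_k}$ with $n_k/\log^2 n_k = 4k/\log^2 k + O(1)$, so $\alpha_{n_k}$ is comparable, up to the bounded factor $e^{O(1)}$, to $e^{4k/\log^2 k}$; the same asymptotic analysis as in Example \ref{examplelogdensity}\eqref{examplelogdensity6} then shows that passing from the full sum $\sum_{j\leq k}\alpha_{n_j}$ to the sum over an arithmetic subprogression $\{a+bj\}$ only costs a bounded multiplicative factor in the liminf (the dominant terms are geometric-like, so summing every $b$-th term changes the sum by a factor bounded above and below). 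Hence $\underline{d}_{B_2}(A(l,\nu))>0$ for every $l,\nu\geq 1$.

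With this in hand, the rest is the standard argument verbatim: the vector $x$ constructed above satisfies that for every $p$ and every $n\in A(l(p),\nu(p))$ we have $T^n x\in U$ whenever $U$ is the prescribed neighbourhood of $y_p$, so $N(x,U)\supseteq A(l(p),\nu(p))$ has positive lower $B_2$-density, and since $(y_p)$ is dense this shows $x$ is a $B_2$-frequently hypercyclic vector for $T$. I expect the main obstacle to be the second step, namely checking carefully that restricting the $B_2$-density computation to an arithmetic subprogression of indices $k$ preserves positivity: unlike the natural density, where passing to an arithmetic progression simply divides the density by $b$, here one must control the ratio $\big(\sum_{j\leq m}\alpha_{n_{a+bj}}\big)\big/\big(\sum_{j\leq bm}\alpha_{n_j}\big)$, which requires the precise $n_k=4k+O(\log^2 k)$ estimate of Proposition \ref{prop_dens_b2} and a summation-by-parts estimate analogous to the one in the proof of Lemma \ref{lemma_b2}. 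Everything else — the selection of $\nu(p)$, the unconditional-convergence tail estimates, and the density-of-$(y_p)$ argument — is routine and identical to the proof of Theorem \ref{frequ_hyp_crit}.
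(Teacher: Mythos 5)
Your proposal is correct and follows essentially the same route as the paper, which likewise just reruns the classical proof of the Frequent Hypercyclicity Criterion after upgrading the sets $A(l,\nu)$ to positive lower $B_2$-density via Lemma \ref{lemma_b2}. In fact you supply more detail than the paper does on the one nontrivial point (that passing to the arithmetic subprogression $I(l,\nu)$ of indices preserves positivity of the lower $B_2$-density, at the cost of a factor comparable to $1/b$), which the paper simply asserts before deferring to the stronger Theorem \ref{main_theorem}.
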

The proof of this result is the same as the classical proof of the Frequent Hypercyclicity Criterion. Indeed, from 
Lemma \ref{lemma_b2}, we can deduce that the sets $A(l,\nu)$ not only have positive lower density but even have positive lower $B_2$-density. 
We won't detail the proof here because we will prove a stronger result in Section \ref{furth_result}.\\

Thanks to Lemma \ref{lemmacomp}, one may actually deduce the following corollary proving that the scale defined by matrices $A_r$ is not fine enough to exhibit the limit in term of densities of the Frequent Hypercyclicity Criterion. 

\begin{corollary}\label{coro_ar}
Under the assumptions of the previous proposition, the operator $T$ is $A_r$-frequently hypercyclic for every $0\leq r<1$.
\end{corollary}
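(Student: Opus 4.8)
The plan is to deduce Corollary~\ref{coro_ar} directly from Proposition~\ref{main_prop} together with the comparison inequalities collected in Lemma~\ref{lemmacomp}. The point is purely formal once one has $B_2$-frequent hypercyclicity in hand: for every subset $E\subset\mathbb N$, Lemma~\ref{lemmacomp} (applied with $t=2$ and $s=s'=r$, using the chain $\underline d_{B_{t'}}(E)\leq \underline d_{A_{s'}}(E)\leq \underline d_{A_s}(E)$) gives in particular
\[
\underline{d}_{A_r}(E)\geq \underline{d}_{B_2}(E)\qquad\text{for every }0\leq r<1,
\]
provided one checks that the hypotheses of Lemma~\ref{lemmacomp} cover the pair $(B_2,A_r)$; since $1<2$ and $0<r<1$, this is exactly one of the displayed inequalities there. (For $r=0$ one uses instead the equality $\underline d_{A_1}=\underline d_{B_0}$ and the monotonicity in the $B$ scale, or simply notes that $\underline d_{A_0}$ is the natural density and invokes frequent hypercyclicity directly.)

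First I would let $x\in X$ be a $B_2$-frequently hypercyclic vector for $T$, whose existence is granted by Proposition~\ref{main_prop} under the stated assumptions. Fix a non-empty open set $U\subset X$. By definition of $B_2$-frequent hypercyclicity, $\underline{d}_{B_2}(N(x,U))>0$. Applying the inequality above with $E=N(x,U)$ yields $\underline{d}_{A_r}(N(x,U))\geq \underline{d}_{B_2}(N(x,U))>0$. Since $U$ was arbitrary, the same vector $x$ witnesses that $T$ is $A_r$-frequently hypercyclic, for every $0\leq r<1$.

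The only genuine verification is that the hypothesis ``$(\alpha_k/\beta_k)$ eventually decreasing to zero'' underlying Lemma~\ref{lemmacomp} indeed holds for the relevant weight sequences, namely $\alpha_k=e^{k^r}$ (the $A_r$-weights) against $\beta_k$ equal to the $B_2$-weights $e^{k/\log^2 k}$: one needs $e^{k^r - k/\log^2 k}\to 0$, i.e. $k^r - k/\log^2 k\to -\infty$, which is clear for $r<1$ since $k/\log^2 k$ dominates $k^r$; eventual monotonicity follows since the exponent $k^r-k/\log^2k$ is eventually decreasing. But this has already been absorbed into Lemma~\ref{lemmacomp}, so in the write-up I would simply cite that lemma. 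Thus there is no real obstacle here: the corollary is a one-line consequence of Proposition~\ref{main_prop} and Lemma~\ref{lemmacomp}, and the remark worth making — and which motivates stating it as a corollary — is the \emph{negative} half: because all the $\underline d_{A_r}$ with $0\le r<1$ lie \emph{above} $\underline d_{B_2}$ in the ordering of Lemma~\ref{lemmacomp}, the $A_r$-scale cannot detect the true threshold reached by the criterion, whence the need for the finer $B$- and $\tilde B$-scales pursued in Section~\ref{furth_result}.
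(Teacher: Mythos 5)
Your proof is correct and is exactly the paper's (implicit) argument: the corollary is stated there as an immediate consequence of Proposition~\ref{main_prop} and the chain of inequalities $\underline{d}_{B_2}(E)\leq\underline{d}_{A_{r}}(E)$ from Lemma~\ref{lemmacomp}, applied to $E=N(x,U)$ for a $B_2$-frequently hypercyclic vector $x$. (Only a minor slip: for $r=0$ the relevant observation is your second alternative, namely that $\underline{d}_{A_0}$ is the natural density, not the identity $\underline{d}_{A_1}=\underline{d}_{B_0}$, which concerns the geometric scale.)
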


The previous result proves that for any $0\leq r<1$, the $A_r$-frequent hypercyclicity phenomenon exists and is even common. On the other hand, 
one may also notice that the geometric rate of growth (i.e. $r=1$) is unreachable in terms of dynamics. More precisely, we have the following result.

\begin{proposition}
There is no $A_1$-frequently hypercyclic operator.
\end{proposition}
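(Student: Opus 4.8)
The plan is to show that for the matrix $A_1 = \left(e^k/\sum_{j=1}^n e^j\right)$, the lower $A_1$-density of \emph{any} infinite set whose complement is also infinite is zero; more precisely, that the only sets of positive lower $A_1$-density are those whose complement is finite. Since a hypercyclic vector $x$ for an operator $T$ on a nontrivial space $X$ must, for two disjoint nonempty open sets $U_1,U_2$, have $N(x,U_1)$ and $N(x,U_2)$ both infinite and disjoint, neither of these sets can have finite complement, so at least one has zero lower $A_1$-density. (Even more simply: $N(x,U)$ and $\mathbb{N}\setminus N(x,U)\supset N(x,V)$ are both infinite for any proper nonempty open $U$ and disjoint nonempty open $V$.) This contradicts $A_1$-frequent hypercyclicity, and the proposition follows.

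So the heart of the matter is the combinatorial estimate on $\underline{d}_{A_1}$. First I would recall from Example \ref{examplelogdensity}\eqref{examplelogdensity5} that $\varphi_\alpha(n) = \sum_{j=1}^n e^j \sim \frac{e}{e-1}e^n$. Note that the hypothesis of Lemma \ref{LemmaDensInfCalc} fails here, since $\alpha_n/\varphi_\alpha(n) \to \frac{e-1}{e} \neq 0$, so I cannot invoke that lemma directly; instead I argue by hand. Let $E = (n_k)$ be infinite with infinite complement. Fix any $k_0$ such that $n_{k_0+1} > n_{k_0}+1$, i.e. there is a gap: some integer $m \notin E$ with $n_{k_0} < m < n_{k_0+1}$ — such $k_0$ exist for arbitrarily large $n_{k_0}$ since $\mathbb{N}\setminus E$ is infinite. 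Then evaluate the defining sum of $\underline{d}_{A_1}$ along the subsequence $N = n_{k_0+1}-1$ (or simply $N=m$): one has
$$\sum_{j=1}^{N} e^j \indicatrice{E}(j) \le \sum_{j=1}^{n_{k_0}} e^j = \varphi_\alpha(n_{k_0}) \sim \frac{e}{e-1}e^{n_{k_0}},$$
while $\varphi_\alpha(N) \ge \varphi_\alpha(n_{k_0}+1) \sim \frac{e}{e-1}e^{n_{k_0}+1}$. Hence the ratio $\sum_{j\le N}e^j\indicatrice{E}(j)/\varphi_\alpha(N)$ is at most $e^{-1}+o(1)$ along this subsequence. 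Since this happens for arbitrarily large such gaps, $\underline{d}_{A_1}(E) \le e^{-1} < 1$.

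This already gives $\underline{d}_{A_1}(E) < 1$, but for the dynamical conclusion I actually need $\underline{d}_{A_1}(N(x,U)) = 0$ for at least one open set, or rather a genuine obstruction. The cleanest route: apply the above not to a single gap but observe that if $\mathbb{N}\setminus E$ is infinite then for infinitely many $k$ we have $n_{k+1} \ge n_k + 1$ with strict gaps accumulating, and choosing $N$ just below a long run of consecutive missing integers drives the ratio down further — but to get exactly $0$ one needs runs of missing integers of unbounded length, which need not hold. The honest statement is therefore the sharper dichotomy: I claim $\underline{d}_{A_1}(E) > 0$ \emph{only if} $\mathbb{N}\setminus E$ is finite. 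Indeed if $\mathbb{N}\setminus E$ is infinite, pick $m_1 < m_2 < \cdots \notin E$; evaluating at $N = m_i - 1$ and bounding $\sum_{j\le m_i-1}e^j\indicatrice{E}(j) \le \sum_{j\le m_i-1}e^j - e^{m_{i-1}} \cdot(\text{nothing useful})$ — better: $\sum_{j\le m_i-1}e^j\indicatrice{E}(j) = \varphi_\alpha(m_i-1) - \sum_{j \le m_i-1, j\notin E} e^j \le \varphi_\alpha(m_i-1) - e^{m_{i-1}}$, so the ratio is at most $1 - e^{m_{i-1}}/\varphi_\alpha(m_i-1)$. If the gaps $m_i - m_{i-1}$ stay bounded this is bounded away from $1$; if they grow, evaluate instead at $N$ just below the largest gap. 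One gets $\liminf \le 1 - \frac{e-1}{e}\cdot e^{-(G-1)}$ where $G = \limsup(m_i-m_{i-1})$, and if $G = \infty$ one still only gets a bound $<1$ from any fixed gap — so to force the liminf to $0$ I use that the \emph{number} of missing integers below $m_i$ tends to infinity: $\sum_{j\le m_i, j\notin E} e^j \ge e^{m_i}\cdot(\text{geometric tail})$ is comparable to $\varphi_\alpha(m_i)$ when infinitely many missing integers cluster near the top. The main obstacle, and the step I would write most carefully, is precisely pinning down this last estimate: showing that infinitely many elements of $\mathbb{N}\setminus E$ force $\underline{d}_{A_1}(E) < 1$ by a definite margin is easy, but the paper presumably wants the full strength that there is \emph{no} $A_1$-frequently hypercyclic operator, which requires the clean dichotomy "$\underline{d}_{A_1}(E)>0 \iff \mathbb{N}\setminus E$ finite." I would prove this by contraposition as sketched, evaluating the Riesz average at indices chosen just before blocks of missing integers and using the exponential weight's concentration at the top of the summation range, then combining with the elementary observation that a hypercyclic orbit's return sets to two disjoint open sets are both coinfinite.
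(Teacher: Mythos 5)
Your proposed combinatorial dichotomy --- that $\underline{d}_{A_1}(E)>0$ if and only if $\mathbb{N}\setminus E$ is finite --- is false, and this is the load-bearing step of your argument. Take $E=2\mathbb{N}$: then for $N$ even,
$$\frac{\sum_{k\leq N}e^k\indicatrice{E}(k)}{\sum_{k\leq N}e^k}=\frac{e^2+e^4+\dots+e^N}{e+e^2+\dots+e^N}\longrightarrow\frac{e-1}{e-e^{-1}}>0,$$
and the odd-$N$ subsequence gives a comparable positive limit, so the even integers have positive lower $A_1$-density while their complement is infinite. The correct characterization, which is what the paper proves, is that $\underline{d}_{A_1}(E)>0$ forces $E$ to have \emph{bounded gaps}: if there were windows $\{N_i-p_i+1,\dots,N_i\}$ disjoint from $E$ with $p_i\to\infty$, then the Riesz average at $N_i$ is at most $(1-e^{-1})\sum_{k\leq N_i-p_i}e^{k-N_i}\leq e^{-p_i}\to 0$. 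Your own computations circle around this (you correctly find that bounded gaps only give a bound $<1$, not $0$), but you then commit to the false ``complement finite'' dichotomy to rescue the argument, and the rescue does not work.

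Because the dichotomy fails, your dynamical conclusion also fails: two disjoint nonempty open sets can perfectly well both have syndetic (hence potentially positive-$A_1$-density) return sets, just as the evens and odds are both syndetic, so ``$N(x,U_1)$ and $N(x,U_2)$ are disjoint and coinfinite'' yields no contradiction. The paper's dynamical step is genuinely different and is needed: once $N(x,U)$ has gaps bounded by $M$, every large iterate satisfies $T^nx\in\bigcup_{k\leq M}T^k(U)$, and since $\sup_{k\leq M}\Vert T^k\Vert<\infty$ one can choose a nonempty open set $V$ far enough from the origin to be disjoint from $\bigcup_{k\leq M}T^k(U)$; the orbit then never visits $V$, contradicting hypercyclicity. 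You would need to replace both your combinatorial lemma and your final dynamical argument by these to obtain a correct proof.
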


\begin{proof} We argue by contradiction. Assume that $T$ is a $A_1$-frequently hypercyclic operator on a Banach space $X$ and $x$ is a $A_1$-frequently hypercyclic vector. Let also $U$ be a non-empty open subset in $X$. Then, by definition and with Example \ref{examplelogdensity}, we get
$$0<\liminf_{N\to+\infty}\sum_{k\leq N}\frac{e^k}{\sum_{j=1}^{N}e^j}\indicatrice{N(x,U)}(k)=\liminf_{N\to+\infty}(1-e^{-1})\sum_{k\leq N}e^{k-N}\indicatrice{N(x,U)}(k).$$
Moreover one may remark that asserting that this limit is non-zero implies that the set $N(x,U)$ has bounded gaps. Indeed, if one suppose that $N(x,U)$ has unbounded gaps then 
there exists a sequence $(N_i)$ and a sequence $(p_i)$ tending to $+\infty$ such that for every $i\in\N$, $\{N_i-p_i+1;N_i-p_i+2;\ldots;N_i\}\cap N(x,U)=\emptyset$. This gives
$$0<\liminf_{i\to+\infty}(1-e^{-1})\sum_{k\leq N_i}e^{k-N_i}\indicatrice{N(x,U)}(k)\leq \liminf_{i\to+\infty}(1-e^{-1})\sum_{k\leq N_i-p_i}e^{k-N_i}=\lim_{i\to+\infty}e^{-p_i}-1=0$$
and this contradiction shows that the set $N(x,U)$ has bounded gaps. Let us denote by $M$ an upper bound of the length of these gaps. It suffices to choose $V$ so far from the origin such that the norm of $T$ forbids $T^k(U)$ from intersecting $V$ for $k\leq M$. This means that the orbit of $x$ will never reach the open set $V$ contradicting the $A_1$-frequent hypercyclicity of $x$.
\end{proof}

\noindent On the other hand, observe that the following result holds. 

\begin{lemma}\label{lemme_dens_br} For every $0<r<1,$ $\underline{d}_{B_r}(n_k)=0.$
\end{lemma}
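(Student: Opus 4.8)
The plan is to exploit Lemma \ref{LemmaDensInfCalc} together with the asymptotic $n_k = 4k + O(\log^2 k)$ from Proposition \ref{prop_dens_b2}. By Lemma \ref{LemmaDensInfCalc} applied with $\alpha_k = e^{k/\log^r k}$ (the weight defining $B_r$), we have
$$\underline{d}_{B_r}((n_k)) = \liminf_{k\to+\infty}\frac{\sum_{j=1}^{k} e^{n_j/\log^r(n_j)}}{\sum_{j=1}^{n_k} e^{j/\log^r(j)}}.$$
Since $n_k \sim 4k$ and $n_j/\log^r(n_j) = 4j/\log^r(4j) + O(\log^{2-r} j)$, the numerator is, by a summation by parts of the same flavour as in Example \ref{examplelogdensity}, asymptotically $\tfrac{1}{4}\log^r(k)\, e^{n_k/\log^r(n_k)}$ up to a bounded multiplicative factor. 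The point is that the denominator $\sum_{j=1}^{n_k} e^{j/\log^r j} \sim \log^r(n_k)\, e^{n_k/\log^r(n_k)}$ also has the dominant exponential factor $e^{n_k/\log^r(n_k)}$, so the exponential parts cancel and the ratio stays bounded — this is exactly why $\underline{d}_{B_2} > 0$ worked. To get $0$ instead of a positive number, I need the $O(\log^2 k)$ error term in $n_k - 4k$ to actually blow up the exponential when divided by $\log^r$ with $r<2$, and in particular with $r<1$.

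The key step is therefore to locate a subsequence along which $n_k$ is \emph{smallest} relative to $4k$, i.e. where $4k - n_k$ is of order $\log^2 k$, and compare it to a generic index where $4k - n_k$ is only of order $\log k$. Concretely, take $\lambda_j = \sum_{l=0}^{j} 2^{2l}$ as in Proposition \ref{prop_dens_b2}, for which $n_{\lambda_j} = 4\lambda_j - 2j^2 - 4j - 3$ and $\log_2 \lambda_j \sim 2j$, so $4\lambda_j - n_{\lambda_j} \sim \tfrac12 \log_2^2 \lambda_j$. I would instead evaluate the liminf along $k = \lambda_j$ in the denominator's range but push the numerator only up to a much smaller index. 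More efficiently: the liminf in the displayed formula is $\le \liminf_j \frac{\sum_{i=1}^{\lambda_j} e^{n_i/\log^r n_i}}{\sum_{i=1}^{n_{\lambda_j}} e^{i/\log^r i}}$, and in the denominator the top term is $e^{n_{\lambda_j}/\log^r(n_{\lambda_j})}$ while, since $n_{\lambda_j}$ is much smaller than $4\lambda_j$, one can also bound the numerator's top term $e^{n_{\lambda_j}/\log^r n_{\lambda_j}}$ and show the numerator is $\le C \log^r(\lambda_j) e^{n_{\lambda_j}/\log^r n_{\lambda_j}}$ whereas the denominator, which runs up to $n_{\lambda_j} \approx 4\lambda_j - 2j^2$, contains terms near its top of size $e^{(4\lambda_j - 2j^2)/\log^r(4\lambda_j)}$. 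The ratio of a typical such denominator term to the numerator top term has exponent
$$\frac{n_{\lambda_j}}{\log^r(n_{\lambda_j})} \quad\text{versus the typical denominator exponent near the top},$$
and since $n_{\lambda_j} \sim 4\lambda_j$ as well (it differs from $4\lambda_j$ only by $O(j^2) = O(\log^2\lambda_j)$, which is negligible compared to $4\lambda_j/\log^r$), I actually expect the cleanest route is the reverse: choose the numerator to stop at the index just below a long run, so that $4k - n_k$ has just jumped down by $\asymp \log^2 k$ relative to the denominator's upper limit.

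The cleanest implementation, and the one I would write up: use that $\underline d_{B_r}$ is monotone by Lemma \ref{lemmacomp} (it decreases as we move down the scale), so it suffices to prove $\underline{d}_{B_r}((n_k)) = 0$ for $r$ close to $1$, and even suffices to produce \emph{one} bad subsequence. Along $k = \lambda_j - 1 = \sum_{l=1}^{j}2^{2l}$ (the index right before $\lambda_j$), the dyadic expansion of $k$ ends in a long block of $1$'s of length $\asymp j \asymp \log_2 k$, forcing, via Lemma \ref{lemmaestimgre}, a term $L_i(i+1) \asymp j \cdot j = \log_2^2 k$ in the sum, so $n_{\lambda_j-1} \le 4(\lambda_j - 1) - c\log^2\lambda_j$ for some $c>0$, while $n_{\lambda_j} = 4\lambda_j + O(\log^2\lambda_j)$ is larger by essentially $4$. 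Then in the ratio
$$\frac{\sum_{i=1}^{\lambda_j - 1} e^{n_i/\log^r n_i}}{\sum_{i=1}^{n_{\lambda_j - 1}} e^{i/\log^r i}},$$
the numerator is $\le C\log^r(\lambda_j)\,e^{n_{\lambda_j-1}/\log^r(n_{\lambda_j-1})}$ by summation by parts, while the denominator is $\gtrsim \log^r(n_{\lambda_j - 1})\,e^{n_{\lambda_j - 1}/\log^r(n_{\lambda_j - 1})}$ — so far that only gives boundedness again. The decisive gain must come from comparing $n_k/\log^r(n_k)$ at the numerator's last index $k = \lambda_j - 1$ against the denominator's last index $n_{\lambda_j - 1}$: these are the \emph{same} quantity, so that is not it either. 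Hence the real mechanism, and the main obstacle to get right, is that $(n_k)$ has \emph{long gaps}: between $n_{\lambda_j - 1}$ and the next value $n_{\lambda_j}$ there is a gap of size $n_{\lambda_j} - n_{\lambda_j-1} = 4 + O(\log^2)$, but more importantly, because the block structure forces $\delta_k$ to be large ($\asymp \log k$) on a positive-density set of $k$'s, the sequence $(n_k)$ skips intervals of length $\asymp \log k$. Evaluating the liminf at $N$ just below such a skipped interval, the numerator stops at $e^{n_k/\log^r n_k}$ while the denominator $\sum_{i\le N}e^{i/\log^r i}$ has grown to include $i$ up to $N = n_{k+1} - 1 = n_k + c\log^r(\cdot)\cdot(\text{something})$... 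The main obstacle, which I would resolve by a careful summation by parts, is precisely to quantify that a gap of length $g_k$ in $(n_k)$ forces the denominator to exceed the numerator by a factor $\exp\!\big(g_k \cdot \tfrac{d}{dN}(N/\log^r N)\big) = \exp(g_k/\log^r N \cdot(1+o(1)))$, and since the gaps satisfy $\limsup g_k/\log^r(n_k) = +\infty$ for $r<1$ (whereas for $r=2$ they don't — $g_k \asymp \log n_k \ll \log^2 n_k$), this blows the ratio up to $+\infty$ along a subsequence, forcing $\underline d_{B_r}((n_k)) = 0$ as the complementary liminf.
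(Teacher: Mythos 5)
After several false starts (which you yourself discard), your final paragraph identifies exactly the mechanism of the paper's proof: along indices $k$ whose dyadic expansion forces $\delta_k\asymp\log_2 k$, the gap $n_{k+1}-n_k=f(\delta_k)+f(\delta_{k+1})\asymp\log n_k$ makes the denominator $\sum_{j\leq n_{k+1}-1}e^{j/\log^r j}$ exceed the numerator $\sum_{j\leq k}e^{n_j/\log^r n_j}\leq\sum_{i\leq n_k}e^{i/\log^r i}$ by a factor $\exp\bigl(\delta_k/\log^r(n_k)\,(1+o(1))\bigr)$, which tends to infinity precisely because $\log^{1-r}(n_k)\to\infty$ for $r<1$. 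The paper implements this with the explicit subsequence $\lambda_k=2^{k+1}-1$ and a Taylor expansion of $\frac{n}{\log^r n}-\frac{n+\delta}{\log^r(n+\delta)}$, which is exactly the ``careful summation by parts'' you defer; so your proposal is correct and essentially the same as the paper's argument.
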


\begin{proof}
We have to estimate the following limit 
$$\underline{d}_{B_r}(n_k)=\liminf_{k\rightarrow +\infty}\left(\frac{\sum_{j=1}^{k}e^{n_j/\log^{r}(n_j)}}
{\sum_{j=1}^{n_{k+1}-1}e^{j/\log^{r}(j)}}\right).$$ 

Remark that by definition of $\delta_k$, there exists an increasing sequence of integers $(\lambda_k)_{k\in\N}$ such that $n_{\lambda_k+1}-n_{\lambda_k}-1=\delta_{\lambda_{k}}
=k+1\sim\log_2(\lambda_k),$ as $k$ tends to $\infty$ (consider for example $\lambda_k=2^{k+1}-1$). 
Then,
$$\underline{d}_{B_r}(n_k)\leq\liminf_{k\rightarrow +\infty}\left(\frac{\sum_{j=1}^{\lambda_k}e^{n_j/\log^{r}(n_j)}}
{\sum_{j=1}^{n_{\lambda_k+1}-1}e^{j/\log^{r}(j)}}\right)\leq\liminf_{k\rightarrow +\infty}\left(\frac{\sum_{j=1}^{n_{\lambda_{k}}}e^{j/\log^{r}(j)}}
{\sum_{j=1}^{n_{\lambda_k+1}-1}e^{j/\log^{r}(j)}}\right).$$
Using the estimate (\ref{examplelogdensity6}) from Example \ref{examplelogdensity}  and the one from Proposition \ref{prop_dens_b2}, we get:
\begin{align*}
\underline{d}_{B_r}(n_k)&\leq\liminf_{k\rightarrow +\infty}\left(\frac{\log^r(n_{\lambda_{k}})e^{n_{\lambda_k}/\log^{r}(n_{\lambda_k})}}
{\log^{r}(n_{\lambda_k+1}-1)e^{n_{\lambda_k}/\log^{r}(n_{\lambda_k})}}\right)&\\
&\leq\liminf_{k\rightarrow +\infty}e^{n_{\lambda_k}/\log^{r}(n_{\lambda_k})-(n_{\lambda_k}+\delta_{\lambda_k})/\log^{r}(n_{\lambda_k}+\delta_{\lambda_k})}&
\end{align*}

We begin by studying the term in the exponent
$$\frac{n_{\lambda_k}}{\log^{r}(n_{\lambda_k})}-\frac{n_{\lambda_k}+\delta_{\lambda_k}}{\log^{r}(n_{\lambda_k}+\delta_{\lambda_k})}=\frac{n_{\lambda_k}}{\log^r\left(n_{\lambda_k}+\delta_{\lambda_k}\right)}\left(\left(1+\frac{\log\left(1+\frac{\delta_{\lambda_k}}{n_{\lambda_k}}\right)}{\log(n_{\lambda_k})} \right)^r-\left(1+\frac{\delta_{\lambda_k}}{n_{\lambda_k}}\right)\right)$$
which reduces to the following thanks to a Taylor expansion:
$$\frac{n_{\lambda_k}}{\log^r\left(n_{\lambda_k}+\delta_{\lambda_k}\right)}\left(\frac{1}{\log(n_{\lambda_k})}-1\right)+o\left(\frac{\delta_{\lambda_k}}{\log^{1+r}(n_{\lambda_k})}\right).$$
Now combining the estimate $\delta_{\lambda_k}\sim\log_2(\lambda_k)$ as $k$ tends to $\infty$ with the one given by Proposition \ref{prop_dens_b2}, we deduce that 
$\delta_{\lambda_k}/\log^{1+r}(n_{\lambda_k}){\rightarrow}0,$ as $k$ tends to $\infty.$ Hence we get
$$e^{n_{\lambda_k}/\log^{r}(n_{\lambda_k})-(n_{\lambda_k}+\delta_{\lambda_k})/\log^{r}(n_{\lambda_k}+\delta_{\lambda_k})}\sim 
e^{\frac{n_{\lambda_k}}{\log^r\left(n_{\lambda_k}+\delta_{\lambda_k}\right)}\left(\frac{1}{\log(n_{\lambda_k})}-1\right)}\underset{k\to+\infty}{\longrightarrow}0.$$
This proves that $\underline{d}_{B_r}(n_k)=0$.
\end{proof}

Notice that Proposition \ref{prop_dens_b2} combined with Lemma \ref{lemme_dens_br} do not allow us to conclude to the $B_r$-frequent hypercyclicity or not in the 
Frequent Hypercyclicity Criterion for $1\leq r<2.$ 

\section{Further results}\label{furth_result} In this section, we are going to improve the conclusion of the Frequent Hypercyclicity Criterion given by Proposition 
\ref{main_prop}. To do this, we will modify the sequence $(n_k)$ used in the proof of Lemma \ref{Lemgrope} to obtain a new sequence possessing a positive $A$-density for an admissible matrix $A$ defining a sharper density than the natural density. \\

Throughout this section, $(a_n)$ will be an increasing sequence of positive integers with $a_1=1$. Using this sequence we define the function $f:\mathbb{N}\rightarrow\mathbb{N},$ by 
$f(j)=m$ for all $j\in\{a_m,\ldots,a_{m+1}-1\}.$ In the spirit of the sequence studied in the previous section, we also define the sequence $(n_k(f))$ by induction: 
$$n_1(f)=f(1)=1\hbox{ and }n_{k}(f)=n_{k-1}(f)+f(\delta_{k-1})+f(\delta_{k})\hbox{ for }k\geq 2.$$ Clearly we obtain the following equality, for all $k\geq 2,$
\begin{equation}\label{Eqnkmod}
n_{k}(f)=2\sum_{i=1}^{k-1}f(\delta_{i})+f(\delta_{k}).
\end{equation}
Let us notice that, if we set $a_m=m$ for every $m\geq 1,$ then the corresponding sequence 
$(n_k(f))$ is the sequence $(n_k)$ of Section \ref{const_classical}. From now on, we will omit the notation $f$ in 
$(n_k(f))$ for sake of readability. Our purpose is to compute an exact formula for the new sequence $(n_k)$ to understand its asymptotic behavior. First of all, 
we obtain an expression for the subsequence $(n_{2^{a_m}}).$ 

\begin{lemma}\label{lemfromuleam}
For all $m\in\N,$ we have 
$$n_{2^{a_m}}=2^{a_m+1}\sum_{i=1}^{m}\frac{1}{2^{a_i-1}}-2m+1.$$
\end{lemma}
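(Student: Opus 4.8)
The plan is to start from the closed formula (\ref{Eqnkmod}) specialised at $k=2^{a_m}$ and to reduce everything to a single combinatorial count. First I would note that the dyadic expansion of $2^{a_m}$ consists of $a_m$ leading zeros followed by a single $1$, so $2^{a_m}\in I(a_m+1,1)$ and hence $\delta_{2^{a_m}}=1$; moreover $f(1)=1$, since $a_1=1$ and $(a_n)$ is strictly increasing, so that $1\in\{a_1,\dots,a_2-1\}$. Plugging this into (\ref{Eqnkmod}) gives
\[
n_{2^{a_m}}=2\sum_{i=1}^{2^{a_m}-1}f(\delta_i)+1,
\]
so the whole statement follows once $\Sigma_N:=\sum_{i=1}^{2^N-1}f(\delta_i)$ has been evaluated at $N=a_m$.

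The core step will be the counting identity
\[
\#\bigl\{\,i:\ 1\le i\le 2^N-1,\ \delta_i=\nu\,\bigr\}=2^{\,N-\nu}\qquad(1\le\nu\le N).
\]
To prove it I would observe that $\delta_i=\nu$ means the dyadic expansion of $i$, read from the least significant bit, consists of $t\ge 0$ zeros, then exactly $\nu$ ones, then a zero, then arbitrary higher bits — equivalently $i=2^{t}\bigl(2^{\nu+1}q+2^{\nu}-1\bigr)$ with $t,q\ge 0$. Counting those that are $<2^{N}$: for each $t\in\{0,\dots,N-\nu-1\}$ there are exactly $2^{\,N-t-\nu-1}$ admissible values of $q$, and the boundary case in which the block of ones already fills up to the top bit $N-1$ (that is, $i\in I(1,\nu)$, typified by $i=2^N-1$) contributes one further element; since $\sum_{t=0}^{N-\nu-1}2^{\,N-t-\nu-1}=2^{\,N-\nu}-1$, the total is $2^{\,N-\nu}$. (An induction on $N$ is an equally viable route.) Grouping $\Sigma_N$ according to the value of $\delta_i$ then yields $\Sigma_N=\sum_{\nu=1}^{N}2^{\,N-\nu}f(\nu)$.

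Finally, taking $N=a_m$, I would use that the definition of $f$ gives $f(\nu)=j$ for $\nu\in\{a_j,\dots,a_{j+1}-1\}$ with $1\le j\le m-1$, while the single leftover index $\nu=a_m$ gives $f(a_m)=m$; hence
\[
\Sigma_{a_m}=\sum_{j=1}^{m-1}j\sum_{\nu=a_j}^{a_{j+1}-1}2^{\,a_m-\nu}+m.
\]
Each inner geometric sum equals $2^{\,a_m+1}\bigl(2^{-a_j}-2^{-a_{j+1}}\bigr)$, and an Abel summation turns $\sum_{j=1}^{m-1}j\bigl(2^{-a_j}-2^{-a_{j+1}}\bigr)$ into $\sum_{j=1}^{m}2^{-a_j}-m\,2^{-a_m}$. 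Substituting, $\Sigma_{a_m}=2^{\,a_m+1}\sum_{j=1}^{m}2^{-a_j}-2m+m$, so
\[
n_{2^{a_m}}=2\Sigma_{a_m}+1=2^{\,a_m+2}\sum_{j=1}^{m}2^{-a_j}-2m+1=2^{\,a_m+1}\sum_{i=1}^{m}\frac{1}{2^{a_i-1}}-2m+1,
\]
which is the asserted formula.

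The only genuinely non-routine ingredient is the count $\#\{i\le 2^N-1:\delta_i=\nu\}=2^{\,N-\nu}$; everything after it is a geometric series followed by a telescoping sum. The delicate point there is exactly the boundary case in which the lowest block of ones already occupies the top available bit (the case $i\in I(1,\nu)$, e.g. $i=2^N-1$), which must be counted. Two quick sanity checks keep the bookkeeping honest: for $m=1$ the formula gives $n_2=4-1=3$, matching $n_2=n_1+f(\delta_1)+f(\delta_2)=1+1+1$; and taking $a_m=m$ (so $f$ is the identity) recovers the sequence $(n_k)$ of Section \ref{const_classical}.
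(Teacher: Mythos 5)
Your proof is correct and follows essentially the same route as the paper's: reduce to $2\sum_{i<2^{a_m}}f(\delta_i)+1$, establish the count $\#\{1\le i\le 2^{N}-1:\delta_i=\nu\}=2^{N-\nu}$, then group the sum by the values of $f$, evaluate the geometric sums and telescope. The only slip is the parenthetical identification of the boundary element as lying in $I(1,\nu)$ — it lies in $I(N-\nu+1,\nu)$ — but this does not affect the count or the conclusion.
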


\begin{proof} Set $\Delta_j^{(m)}=\{0\leq l\leq 2^{a_m}-1:\ \delta_l=j\}.$ 
First let us observe that we have, by definition, for every $1\leq j\leq a_m$, 
$$n_{2^{a_m}}=2\sum_{k=1}^{2^{a_m}-1}f(\delta_k)+f(\delta_{2^{a_m}})=2\sum_{j=1}^{a_m}f(j)\#\Delta_j^{(m)}+f(\delta_{2^{a_m}}).$$
Thus it suffices to compute the cardinal of the set $\Delta_j^{(m)}$.
It easily follows that $\#\Delta_j^{(m)}=1+\sum_{i=0}^{a_m-j-1}2^{a_m-j-i}$. Indeed, we separate the case when the first block of ones in the dyadic decomposition of $l$ ends on $2^{a_m-1}$ and the case when the first block of ones ends before. In the first case, we have no choice, there is only one possibility but in the second case we have a certain number $i$ of zeros at the beginning, then the first block of ones, which is of length $j,$ then one zero (because the first block of ones has to be of length $j$) and then we have $2^{a_m-j-i}$ possible choices as shown below.
$$(\overbrace{\underbrace{0,0,\ldots,0}_{\text{length } i},\underbrace{1,1,\ldots,1,1,0}_{\text{length } j+1},\star,\star,\ldots,\star,\star}^{\text{length } a_m+1},0,0,\ldots).$$ 
A quick calculation leads to $\#\Delta_j^{(m)}=1+\sum_{i=0}^{a_m-j-1}2^{a_m-j-i}=2^{a_m-j}.$ Therefore we get 
$$n_{2^{a_m}}=2\sum_{j=1}^{a_m}f(j)2^{a_m-j}+f(\delta_{2^{a_m}})=2\sum_{j=1}^{a_m} f(j)2^{a_m-j}+1.$$
Now we use the link between the values of $f(j)$ and the position of $j$ compared to the sequence $(a_m)$ to compute the sum:
\begin{align*}
n_{2^{a_m}}&=2\sum_{j=1}^{a_m} f(j)2^{a_m-j}+1&\\
&=2^{a_m+1}\sum_{j=1}^{a_m-1} f(j)2^{-j}+2m+1.
\end{align*}
Let us now split the sum according to the values of $f(j)$:
\begin{align*}
n_{2^{a_m}}&=2^{a_m+1}\sum_{i=1}^{m-1} \left(\sum_{j=a_i}^{a_{i+1}-1}f(j)2^{-j}\right)+2m+1&\\
&=2^{a_m+1}\sum_{i=1}^{m-1} i\left(\sum_{j=a_i}^{a_{i+1}-1}2^{-j}\right)+2m+1&\\
&=2^{a_m+1}\sum_{i=1}^{m-1} i\left(\frac{1}{2^{a_i-1}}-\frac{1}{2^{a_{i+1}-1}}\right)+2m+1&\\
&=2^{a_m+1}\sum_{i=1}^{m} \frac{1}{2^{a_i-1}}-2m+1.
\end{align*}
\end{proof}

\noindent We strengthen the previous lemma as follows. 

\begin{lemma}\label{lemformuleamq}
Let $m\in\N.$ For every $q\in\N$ such that $q<a_m-a_{m-1},$ the following equality holds
$$n_{2^{a_m-q}}=2^{a_m-q+1}\sum_{j=1}^{m-1}\frac{1}{2^{a_j-1}}-2(m-1)+1.$$
\end{lemma}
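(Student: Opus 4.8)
The statement I want to prove is Lemma~\ref{lemformuleamq}, which extends Lemma~\ref{lemfromuleam} from the index $2^{a_m}$ to the slightly smaller indices $2^{a_m-q}$ with $q<a_m-a_{m-1}$. The natural approach is to re-run the combinatorial computation that proved Lemma~\ref{lemfromuleam}, keeping careful track of the effect of lowering the top exponent from $a_m$ to $a_m-q$. The key observation is that since $q<a_m-a_{m-1}$, every integer $j$ in the range $1\le j\le a_m-q$ still satisfies $j< a_m$, so in fact $j\le a_{m}-1$ and more importantly the function $f$ takes only the values $1,2,\dots,m-1$ on this range (the value $m$, which $f$ assumes on $\{a_m,\dots,a_{m+1}-1\}$, never appears because $a_m-q>a_{m-1}\ge a_m-q$... more precisely $a_m-q > a_{m-1}$ means $f(a_m-q)=m-1$ if $a_m-q\ge a_{m-1}$, and in any case $f(j)\le m-1$ for all $j\le a_m-q<a_m$). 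This is exactly why the answer only involves the sum up to $m-1$ rather than $m$.

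\medskip

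\noindent\textbf{Main steps.} First I would introduce $\Delta_j^{(m,q)}=\{0\le l\le 2^{a_m-q}-1:\ \delta_l=j\}$ in analogy with the proof of Lemma~\ref{lemfromuleam}, and write
$$n_{2^{a_m-q}}=2\sum_{j=1}^{a_m-q}f(j)\#\Delta_j^{(m,q)}+f(\delta_{2^{a_m-q}}).$$
Second, I would redo the dyadic-block counting argument: an integer $0\le l\le 2^{a_m-q}-1$ has a dyadic decomposition of length $a_m-q$, and the count of those whose \emph{first} block of ones has length exactly $j$ is $\#\Delta_j^{(m,q)}=2^{a_m-q-j}$ by the same ``leading zeros / block of $j$ ones / one zero / arbitrary tail'' bookkeeping used before (the geometric sum $1+\sum_{i=0}^{a_m-q-j-1}2^{a_m-q-j-i}$ collapses to $2^{a_m-q-j}$). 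Also $\delta_{2^{a_m-q}}=1$ so $f(\delta_{2^{a_m-q}})=f(1)=1$. Third, substitute and factor:
$$n_{2^{a_m-q}}=2\sum_{j=1}^{a_m-q}f(j)2^{a_m-q-j}+1=2^{a_m-q+1}\sum_{j=1}^{a_m-q}f(j)2^{-j}+1.$$
Fourth, split the sum $\sum_{j=1}^{a_m-q}f(j)2^{-j}$ according to the blocks $[a_i,a_{i+1}-1]$ on which $f\equiv i$; since $q<a_m-a_{m-1}$ the index $a_m-q$ lies in the block $[a_{m-1},a_m-1]$, so the sum runs over full blocks $i=1,\dots,m-2$ plus a partial last block on which $f\equiv m-1$. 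Using $\sum_{j=a_i}^{a_{i+1}-1}2^{-j}=2^{-(a_i-1)}-2^{-(a_{i+1}-1)}$ for the full blocks and $\sum_{j=a_{m-1}}^{a_m-q}2^{-j}=2^{-(a_{m-1}-1)}-2^{-(a_m-q)}$ for the partial one, an Abel-summation telescoping (exactly as in the last display chain of Lemma~\ref{lemfromuleam}'s proof) should collapse everything to
$$2^{a_m-q+1}\left(\sum_{j=1}^{m-1}\frac{1}{2^{a_j-1}}-\frac{m-1}{2^{a_m-q}}\right)+1 = 2^{a_m-q+1}\sum_{j=1}^{m-1}\frac{1}{2^{a_j-1}}-2(m-1)+1,$$
which is the claimed formula.

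\medskip

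\noindent\textbf{Where the difficulty lies.} The argument is essentially a repetition of the proof of Lemma~\ref{lemfromuleam}, so no genuinely new idea is needed; the only delicate point is keeping the bookkeeping honest in the partial last block, i.e. making sure that the hypothesis $q<a_m-a_{m-1}$ is exactly what guarantees $a_{m-1}\le a_m-q$ (so the last block is $[a_{m-1},a_m-q]$, over which $f$ is constant equal to $m-1$) and that the partial telescoping term $-2^{-(a_m-q)}$ combines with the coefficient $m-1$ to produce the constant $-2(m-1)$ after multiplying by $2^{a_m-q+1}$. I would double-check the edge case $q=a_m-a_{m-1}-1$ (the largest allowed value, where the partial block is a single point or two points) and the small-$m$ cases to confirm the indexing. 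Alternatively, one could deduce this lemma directly from Lemma~\ref{lemfromuleam} by noting $n_{2^{a_m-q}}=n_{2^{a_m}}-2\sum_{l=2^{a_m-q}}^{2^{a_m}-1}f(\delta_l)$ and evaluating that difference via the same $\#\Delta_j$ counts; but the direct recomputation above is cleaner and less error-prone, so that is the route I would take.
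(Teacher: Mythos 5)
Your proposal is correct and follows essentially the same route as the paper: the paper's proof likewise just reruns the block-counting argument of Lemma~\ref{lemfromuleam} with top exponent $a_m-q$, obtains $n_{2^{a_m-q}}=2^{a_m-q+1}\sum_{j=1}^{a_m-q}f(j)2^{-j}+1$, and splits the sum into the full blocks $i=1,\dots,m-2$ plus the partial block $[a_{m-1},a_m-q]$ on which $f\equiv m-1$ before telescoping. Your identification of the role of the hypothesis $q<a_m-a_{m-1}$ (ensuring $f\le m-1$ on the whole range and a nonempty partial last block) and the final telescoping computation match the paper exactly.
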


\begin{proof} 
This proof works along the same lines as the proof of Lemma \ref{lemfromuleam}. Thus, we adapt the preceding proof. It yields 
\begin{align*}
n_{2^{a_m-q}}&=2\sum_{j=1}^{2^{a_{m}-q}-1} f(\delta_j)+f(\delta_{2^{a_m-q}})&\\
&=2\sum_{j=1}^{a_m-q} f(j)2^{a_m-q-j}+1&\\
&=2^{a_m-q+1}\left(\left(\sum_{i=1}^{m-2} \sum_{j=a_i}^{a_{i+1}-1}i2^{-j}\right)+\sum_{j=a_{m-1}}^{a_m-q}(m-1)2^{-j}\right)+1&\\
&=2^{a_m-q+1}\sum_{j=1}^{m-1}\frac{1}{2^{a_j-1}}-2(m-1)+1.
\end{align*}

\end{proof}

\begin{lemma}\label{lem2nfdec} 
For $k=2^n+\sum_{i=0}^{n-2}\alpha_i2^i$ with $\alpha_i\in\{0;1\},$ $0\leq i\leq n-2$, and 
$(\alpha_0,\ldots,\alpha_{n-2})\ne (0,\ldots,0),$ we have 

$$n_{k-2^n}=2\sum_{i=2^n+1}^{k-1}f(\delta_i)+f(\delta_k).$$
For $k=2^n+2^{n-1}+\sum_{i=0}^{n-2}\alpha_i2^i$ with $\alpha_i\in\{0;1\},$ $0\leq i\leq n-2$, we have 
$$n_{k-2^n}=2\sum_{i=2^n+1}^{k-1}f(\delta_i)-f(\delta_{k-2^n})-2(f(L)-1)+2f(\delta_k),$$
where $L$ is the length of the block of one's containing the coefficient one of $2^n$ in the dyadic decomposition of $k$.
\end{lemma}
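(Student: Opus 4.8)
The plan is to establish both identities by the same bookkeeping device used in the proofs of Lemmas \ref{lemfromuleam} and \ref{lemformuleamq}, namely by counting, for each value $j$, how many indices $i$ in the relevant range satisfy $\delta_i=j$, and then summing $f(j)$ against those multiplicities. Recall that $\delta_i$ is determined by the position and length of the first block of ones in the dyadic expansion of $i$: writing $i=(0,\dots,0,1,\dots,1,0,\ast)$ with $l-1$ leading zeros and then exactly $\nu$ ones, we have $\delta_i=\nu$. The key structural observation is that when $k$ has its top bit at $2^n$, the indices $i$ with $2^n< i< k$ are exactly the integers whose dyadic expansion agrees with that of $k$ in bit $n$ (set to one) and is smaller below; and the quantity $\sum_{i=2^n+1}^{k-1}f(\delta_i)$ is precisely $\tfrac12\bigl(n_{k}-f(\delta_k)\bigr)-\bigl(\tfrac12(n_{2^n}-f(\delta_{2^n}))\bigr)$ shifted appropriately once one sorts out the endpoints. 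So the strategy is: express everything in terms of the partial sums $\sigma(k):=\sum_{i=1}^{k-1}f(\delta_i)$, use $n_k=2\sigma(k)+f(\delta_k)$ from \eqref{Eqnkmod}, and reduce both claims to an identity relating $\sigma(k-2^n)$, $\sigma(k)$, $\sigma(2^n)$.

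For the first identity, the hypothesis is that bit $n-1$ of $k$ is zero. Then the map $i\mapsto i-2^n$ is a bijection from $\{2^n+1,\dots,k-1\}$ onto $\{1,\dots,k-2^n-1\}$, and I claim it preserves $\delta$: indeed, if $2^n<i<k$ and bit $n-1$ of $k$ is zero, then the leading one of $i$ at position $n$ is immediately followed by a zero (bit $n-1$ of $i$ is zero because $i<k$ forces agreement with $k$ down through bit $n-1$... more carefully, $i<k$ and both have bit $n$ equal to one means $i$ and $k$ first differ at some bit $<n$, and since bit $n-1$ of $k$ is $0$, bit $n-1$ of $i$ is also $0$), so the first block of ones of $i$ is the single bit at position $n$, giving $\delta_i=1$; meanwhile $i-2^n<2^{n-1}$ so the leading one of $i-2^n$ is at some position $\le n-2$, and in fact the block structure of $i-2^n$ below position $n-2$ is identical to that of $i$. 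Hence $\delta_{i}$ need not equal $\delta_{i-2^n}$ verbatim — rather, one checks that $\sum_{i=2^n+1}^{k-1}f(\delta_i)=\sum_{i=1}^{k-2^n-1}f(\delta_i)+\bigl(\text{correction from the removed leading one}\bigr)$, and the correction is engineered to vanish because $f(\delta_k)$ is the same whether computed from $k$ or from $k-2^n$ (the first block of ones of $k$, sitting at position $n$ with bit $n-1$ zero, has length one, so $\delta_k=1=\delta_{k-2^n}$ only if $k-2^n$ also starts with an isolated one — but this is not assumed). The honest route is: directly compute $n_{k-2^n}=2\sigma(k-2^n)+f(\delta_{k-2^n})$ and compare term by term with $2\sum_{i=2^n+1}^{k-1}f(\delta_i)+f(\delta_k)$ using the $\delta$-preserving bijection on the overlapping range together with the endpoint matching $\delta_k=\delta_{k-2^n}$, which does hold here precisely because bit $n-1$ of $k$ is zero (so the first block of $k$ is $\{n\}$, length one, and removing $2^n$ turns the expansion into that of $k-2^n$ whose first block is the first block of the $\alpha$'s — their $f\circ\delta$ values being linked by the shift).

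For the second identity, bit $n-1$ of $k$ is one, so $2^n$ and $2^{n-1}$ are both in the first block of ones of $k$, of total length $L$; then $\delta_k=L$ while $\delta_{k-2^n}$ sees a first block shortened by one, so $\delta_{k-2^n}=L-1$ (when $n\ge 2$ and the block genuinely had length $L$), and $f(\delta_k)=f(L)$, $f(\delta_{k-2^n})=f(L-1)$. Now the bijection $i\mapsto i-2^n$ from $\{2^n+1,\dots,k-1\}$ to $\{1,\dots,k-2^n-1\}$ no longer preserves $\delta$ for those $i$ whose first block of ones includes bit $n$: for such $i$, removing $2^n$ shortens its first block by one, changing $\delta_i$ from some value $j$ to $j-1$. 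Counting these exceptional $i$ — they are exactly the integers in $(2^n,k)$ whose bits $n,n-1,\dots$ start with a run of ones — and tracking the net change $\sum (f(j-1)-f(j))$ over them produces the correction term $-f(\delta_{k-2^n})-2(f(L)-1)+2f(\delta_k)=-f(L-1)-2f(L)+2f(L)+2=-f(L-1)+2$; matching this against the explicit telescoping is the computation to carry out. I would organize it as: (a) split $\sum_{i=2^n+1}^{k-1}f(\delta_i)$ into the part over $i$ with leading block containing $2^{n-1}$ and the complementary part; (b) on the complementary part the shift preserves $\delta$; (c) on the first part, use Lemma \ref{lemformuleamq}-style block counting to evaluate it in closed form and likewise its image under the shift.

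The main obstacle, and where care is needed, is the endpoint and exceptional-index bookkeeping in part (c): getting the multiplicities of the "exceptional" indices right, and confirming that the telescoping of $f$-values over a block of consecutive $\delta$-values collapses to exactly $-f(\delta_{k-2^n})-2(f(L)-1)+2f(\delta_k)$ rather than an off-by-one variant. Everything else is a mechanical rerun of the counting already performed for Lemmas \ref{lemfromuleam} and \ref{lemformuleamq}, so I would present those reductions briskly and spend the detail on the block-telescoping identity.
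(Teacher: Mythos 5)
Your proposal has a genuine gap, and it stems from reading the convention for $\delta_i$ backwards. In this paper the dyadic representation is written $i=(a_0,a_1,\dots)$ with the \emph{least} significant bit first, so the ``first block of ones'' whose length defines $\delta_i$ is the lowest block, not the block containing the top bit $2^n$. With the correct reading the first identity is immediate: for $2^n<i\le k$ with bit $n-1$ of $k$ equal to zero, $i-2^n$ is a nonzero integer smaller than $2^{n-1}$, so the lowest block of ones of $i$ sits among positions $0,\dots,n-2$ and is untouched by removing $2^n$; hence $\delta_i=\delta_{i-2^n}$ for \emph{every} such $i$, including $i=k$, and the sum re-indexes directly to $2\sum_{j=1}^{k-2^n-1}f(\delta_j)+f(\delta_{k-2^n})=n_{k-2^n}$. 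Under your reading ($\delta_i=1$ for all such $i$, with first block $\{n\}$) the claimed endpoint identity $\delta_k=\delta_{k-2^n}$ is false and the shift does not preserve $\delta$ --- exactly the inconsistency you notice but never resolve; the ``honest route'' you invoke is not carried out and cannot succeed under that reading.

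The same confusion derails the second part. With the correct convention, $\delta_k$ and $\delta_{k-2^n}$ are \emph{not} $L$ and $L-1$ in general (for $k=2^n+2^{n-1}+1$ one has $\delta_k=1$ while $L=2$), and the exceptional indices are not ``all $i$ whose bits $n,n-1,\dots$ start with a run of ones'' but precisely the pure runs $i=\sum_{l=0}^{p}2^{n-l}$ with $p\in\{1,\dots,L-1\}$: any set bit below such a run would relocate the first block away from position $n$. For those indices $\delta_i=p+1$ and $\delta_{i-2^n}=p$, so the total discrepancy is $\sum_{p=1}^{L-1}\bigl(f(p+1)-f(p)\bigr)=f(L)-f(1)=f(L)-1$, where $a_1=1$ is what guarantees the telescoping starts at $f(1)=1$. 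Substituting $\sum_{i=2^n+1}^{k}f(\delta_i)=\sum_{j=1}^{k-2^n}f(\delta_j)+f(L)-1$ into $2\sum_{i=2^n+1}^{k-1}f(\delta_i)=2\sum_{i=2^n+1}^{k}f(\delta_i)-2f(\delta_k)$ and using \eqref{Eqnkmod} in the form $2\sum_{j=1}^{k-2^n}f(\delta_j)=n_{k-2^n}+f(\delta_{k-2^n})$ yields the stated formula. This is precisely the step you flag as ``the computation to carry out'' and leave undone, so even setting aside the convention issue the argument is incomplete at its only nontrivial point.
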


\begin{proof} We begin by proving the first assertion. We have 
$$n_{k-2^n}=2\sum_{i=1}^{k-2^n-1}f(\delta_i)+f(\delta_{k-2^n}).$$
Since $k=2^n+\sum_{i=0}^{n-2}\alpha_i2^i,$ observe that for all $2^{n}+1\leq i\leq k-1$ the dyadic decomposition 
of $i$ contains a one for some $2^l$ with $0\leq l\leq n-2$ and the coefficient of $2^{n-1}$ is zero. 
Therefore, the first block of ones in the dyadic decomposition of $i$ does not contain 
the coefficient of $2^n$. Thus, for every such $2^n+1\leq i\leq k-1$, we have $\delta_i=\delta_{i-2^n}$. This proves the first part of the lemma since $\delta_k=\delta_{k-2^n}$.
\smallskip

To prove the second assertion, we begin by observing that 
$$2\sum_{i=2^n+1}^{k-1}f(\delta_i)=2\sum_{i=2^n+1}^{k}f(\delta_i)-2f(\delta_k).$$ 
Then, as above if the index $i$ is such that the coefficient of $2^n$ does not belong to the first block of ones (in the dyadic decomposition of $i$) then $\delta_i=\delta_{i-2^n}$.
On the other hand, if the coefficient of $2^n$ belongs to the first block of ones and 
since we have $a_{n-1}=1$ then $i$ has to be of the form $i=\sum_{l=0}^{p}2^{n-l}$ for $p\in\{1,\ldots,L-1\}$ and $\delta_i=\delta_{i-2^n}+1$. Now let us pick a particular index $i$ of the form 
$i=\sum_{l=0}^{p}2^{n-l}$ with $p\in\{1,\ldots,L-1\}.$ We consider two cases:\\
\noindent Case 1: for every $j\in\{2,\ldots,f(L)\}$, we have $p+1\ne a_j$. Then 
we have $f(\delta_i)=f(p+1)=f(\delta_i-1)=f(\delta_{i-2^n}).$\\
\noindent Case 2: there exists an integer $j\in\{2,\ldots,f(L)\}$ with $p+1= a_j.$ Then 
we get $f(\delta_i)=f(a_j)=j=f(\delta_i-1)+1=f(\delta_{i-2^n})+1$.\\
Finally we deduce
\begin{align*}
2\sum_{i=2^n+1}^{k-1}f(\delta_i)=2\sum_{i=2^n+1}^{k}f(\delta_i)-2f(\delta_k)&=
2\left(\sum_{i=1}^{k-2^n}f(\delta_i)+(f(L)-1)\right)-2f(\delta_k)&\\
&=n_{k-2^n}+f(\delta_{k-2^n})+2(f(L)-1)-2f(\delta_k).&
\end{align*}
\end{proof}

From Lemma \ref{lem2nfdec} we deduce the following result.

\begin{lemma}\label{lemdec1}
Let $L$ be any non-zero integer and $q$ be an integer. If $k=\sum_{j=0}^{L-1}2^{q+j}+k'$ with $0\leq k'<2^{q-1}$ then either $k'\ne 0$ and
$$n_k=n_{k'}+\sum_{j=0}^{L-1}n_{2^{q+j}}+2\sum_{j=2}^{L}(f(j)-1)+L,$$
or $k'=0$ and
$$n_k=\sum_{j=0}^{L-1}n_{2^{q+j}}+2\sum_{j=2}^{L}(f(j)-1)+L-f(L).$$
\end{lemma}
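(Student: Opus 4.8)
The statement to prove is Lemma \ref{lemdec1}, which expresses $n_k$ in terms of $n_{k'}$ and the ``block pieces'' $n_{2^{q+j}}$ when the dyadic decomposition of $k$ begins with a block of $L$ consecutive ones starting at position $q$ (i.e. $k=\sum_{j=0}^{L-1}2^{q+j}+k'$ with $0\le k'<2^{q-1}$). The natural approach is induction on the block length $L$, using Lemma \ref{lem2nfdec} as the engine for peeling off one power of two at a time from the top of the decomposition. Recall that Lemma \ref{lem2nfdec} tells us exactly how $n_{k-2^n}$ relates to a truncated sum $2\sum_{i=2^n+1}^{k-1}f(\delta_i)+f(\delta_k)$, distinguishing the case where $2^{n-1}$ does not appear (leading bit isolated) from the case where it does (block of length $\ge 2$).

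\textbf{Base case and inductive step.} For $L=1$ the hypothesis is $k=2^q+k'$ with $0\le k'<2^{q-1}$, so the coefficient of $2^{q-1}$ in $k$ is zero and the leading one is isolated. This is precisely the first assertion of Lemma \ref{lem2nfdec} (with $n=q$): one gets $n_{k'}=n_{k-2^q}=2\sum_{i=2^q+1}^{k-1}f(\delta_i)+f(\delta_k)$, while $n_k=2\sum_{i=1}^{k-1}f(\delta_i)+f(\delta_k)=2\sum_{i=1}^{2^q}f(\delta_i)+2\sum_{i=2^q+1}^{k-1}f(\delta_i)+f(\delta_k)$; since $2\sum_{i=1}^{2^q}f(\delta_i)+f(\delta_{2^q})=n_{2^q}$ and $f(\delta_{2^q})=f(1)=1$, one reads off $n_k=n_{k'}+n_{2^q}+1$ when $k'\neq 0$, which matches the claimed formula since the sum $2\sum_{j=2}^{1}(\cdots)$ is empty and $L=1$. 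The case $k'=0$ is handled the same way, the only change being that the ``$f(\delta_k)$'' term now coincides with $f(L)=f(1)$, producing the $-f(L)$ correction. For the inductive step, write $k=2^{q+L-1}+\tilde k$ where $\tilde k=\sum_{j=0}^{L-2}2^{q+j}+k'$ still begins with a block, now of length $L-1$, at position $q$; apply the second assertion of Lemma \ref{lem2nfdec} with $n=q+L-1$ to relate $n_k$ to $n_{\tilde k}$ together with $n_{2^{q+L-1}}$ and the correction term $+2(f(L)-1)$ coming from the block of length $L$ containing the top bit, then invoke the inductive hypothesis on $\tilde k$.

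\textbf{Bookkeeping.} The main — though entirely routine — obstacle is keeping track of the additive constants: each application of Lemma \ref{lem2nfdec} in the ``block'' case contributes a term $2(f(L)-1)$ (with the appropriate $L$ being the length of the block through the current top bit, which is constant along the induction) together with a discrepancy between $f(\delta_k)$ and $f(\delta_{k-2^n})$, and these must be summed telescopically over $j=2,\dots,L$ to yield $2\sum_{j=2}^L(f(j)-1)$, plus the $+L$ coming from the $L$ contributions of $f(\delta_{2^{q+j}})=f(1)=1$. In the degenerate case $k'=0$ one must check that the last peeling (when $\tilde k$ reaches $2^q$, i.e. a block of length one with $k'=0$) replaces one of these $+1$ terms by $1-f(L)$; careful comparison shows this is exactly the stated $-f(L)$ correction. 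I would also double-check that the hypothesis $k'<2^{q-1}$ is genuinely used: it guarantees the zero bit at position $q-1$ needed so that the block of ones at the top has length exactly $L$ and that the truncated-sum identities of Lemma \ref{lem2nfdec} apply verbatim at each stage. Once the constants are matched the proof is complete.
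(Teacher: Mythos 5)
Your proposal is correct and follows essentially the same route as the paper: induction on $L$, with the base case read off from the first assertion of Lemma \ref{lem2nfdec} and the inductive step obtained by peeling off the top power $2^{q+L-1}$ via its second assertion, then matching the additive constants and treating $k'=0$ separately. One small wording slip: the block length entering the correction term $2(f(\cdot)-1)$ is \emph{not} constant along the induction — it decreases from $L$ down to $2$ as you peel — which is exactly why these corrections telescope to $2\sum_{j=2}^{L}(f(j)-1)$ as you correctly state.
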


\begin{proof}
We proceed by induction on $L.$ 
For $L=1$, set $k=2^q+k'$. First, observe that if $k'=0$, the result is clear by Lemma \ref{lem2nfdec}. So assume that $0<k'<2^{q-1}.$ We divide $n_k$ into two sums
$$n_k=\left(2\sum_{i=1}^{2^q-1}f(\delta_i)+f(\delta_{2^q})\right)+\left(f(\delta_{2^q})+2\sum_{i=2^q+1}^{k-1}f(\delta_i)+f(\delta_{k})\right).$$
It suffices to apply Lemma \ref{lem2nfdec} to obtain 
$$n_k=n_{2^q}+f(\delta_{2^q})+n_{k'}=n_{2^q}+1+n_{k'}$$ 
and we have the desired conclusion. Now choose $L\geq 2$ and suppose that the result holds for every integer $l,$ with $1\leq l\leq L-1.$ 
By Lemma \ref{lem2nfdec}, we get
\begin{align*}
n_k&=\sum_{i=1}^{k-1}f(\delta_i)+f(\delta_k) &\\
&=\sum_{i=1}^{2^{q+L-1}-1}f(\delta_i)+2f(\delta_{2^{q+L-1}})+2\sum_{i=2^{q+L-1}+1}^{k-1}f(\delta_i)+f(\delta_{k})&\\
&=n_{2^{q+L-1}}+f(\delta_{2^{q+L-1}})+n_{k-2^{q+L-1}}+f(\delta_{k-2^{q+L-1}})+2(f(L)-1)-f(\delta_k).
\end{align*}
We have $f(\delta_{2^{q+L-1}})=1.$ Moreover suppose that $0<k'<2^{q-1}$, then the block of ones containing the coefficient 
one of $2^{q+L-1}$ is not the first one, thus $\delta_{k}=\delta_{k'}=\delta_{k-2^{q+L-1}}.$ Using the induction hypothesis, we obtain
\begin{align*}
n_k&=n_{2^{q+L-1}}+n_{k-2^{q+L-1}}+2(f(L)-1)+1&\\
&=n_{k'}+\sum_{j=0}^{L-2}n_{2^{q+j}}+2\sum_{j=2}^{L-1}(f(j)-1)+L-1+n_{2^{q+L-1}}+2(f(L)-1)+1&\\
&=n_{k'}+\sum_{j=0}^{L-1}n_{2^{q+j}}+2\sum_{j=2}^{L}(f(j)-1)+L.
\end{align*}
On the other hand, in the case $k'=0$, the induction hypothesis gives
\begin{align*}
n_k&=n_{2^{q+L-1}}+1+n_{k-2^{q+L-1}}+f(L-1)+2(f(L)-1)-f(L)&\\
&=\sum_{j=0}^{L-2}n_{2^{q+j}}+2\sum_{j=2}^{L-1}(f(j)-1)+(L-1)-f(L-1)\\&\quad\quad +n_{2^{q+L-1}}+1+f(L-1)+2(f(L)-1)-f(L)&\\
&=\sum_{j=0}^{L-1}n_{2^{q+j}}+2\sum_{j=2}^{L}(f(j)-1)+L-f(L).
\end{align*}
This completes the proof. 
\end{proof}

From Lemma \ref{lemdec1}, we immediately get the following result since $f(1)=1$.

\begin{lemma}\label{lemdecsum}  Let $L_1,\ldots, L_r$ be non-zero integers and $q_1,\ldots,q_r$ be integers such that 
$q_i+L_i<q_{i+1}$ for every $1\leq i\leq r-1$. For $k=\sum_{i=1}^{r}\sum_{j=0}^{L_i-1}2^{q_i+j}$ 
we have
$$n_k=\sum_{i=1}^{r}\sum_{j=0}^{L_i-1}n_{2^{q_i+j}}+2\sum_{i=1}^{r}\sum_{j=1}^{L_i}f(j)-\sum_{i=1}^{r}L_i-f(L_1).$$
\end{lemma}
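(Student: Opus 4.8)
The plan is to prove the identity by induction on the number $r$ of blocks, using Lemma \ref{lemdec1} to strip off the topmost block at each step. The only preparatory observation I need is the elementary rewriting, valid for any non-zero integer $L$ since $f(1)=1$,
$$2\sum_{j=2}^{L}(f(j)-1)+L=2\sum_{j=1}^{L}f(j)-L,\qquad 2\sum_{j=2}^{L}(f(j)-1)+L-f(L)=2\sum_{j=1}^{L}f(j)-L-f(L),$$
which recasts the two formulas of Lemma \ref{lemdec1} in the shape appearing in the target statement.

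For the base case $r=1$ I would simply note that $k=\sum_{j=0}^{L_1-1}2^{q_1+j}$ is the $k'=0$ instance of Lemma \ref{lemdec1} with $L=L_1$, $q=q_1$, which after the rewriting above gives exactly the claimed formula.

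For the inductive step, assuming the result for $r-1$ blocks, I would write $k=\sum_{j=0}^{L_r-1}2^{q_r+j}+k'$ with $k'=\sum_{i=1}^{r-1}\sum_{j=0}^{L_i-1}2^{q_i+j}$ and check the hypotheses of Lemma \ref{lemdec1}: since the highest power of two occurring in $k'$ is $2^{q_{r-1}+L_{r-1}-1}$, the separation assumption $q_{r-1}+L_{r-1}<q_r$ forces $k'<2^{q_{r-1}+L_{r-1}}\le 2^{q_r-1}$, and $k'\ne 0$ because $r-1\ge 1$ and every $L_i$ is non-zero. Then the $k'\ne 0$ branch of Lemma \ref{lemdec1} expresses $n_k$ in terms of $n_{k'}$, $\sum_{j=0}^{L_r-1}n_{2^{q_r+j}}$, and a correction term equal to $2\sum_{j=1}^{L_r}f(j)-L_r$; substituting the induction hypothesis for $n_{k'}$ and merging the finite sums produces precisely the asserted identity, the stray term $-f(L_1)$ being inherited unchanged from the base case and never regenerated.

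There is no genuine obstacle: the whole weight of the argument sits in Lemma \ref{lemdec1}. The only two places that deserve a line of care are verifying that the tail $k'$ is small enough to serve as the ``$k'$'' of Lemma \ref{lemdec1} — this is exactly where the hypothesis $q_i+L_i<q_{i+1}$ enters — and the routine bookkeeping with the correction terms, which rests solely on $f(1)=1$.
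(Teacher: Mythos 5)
Your proof is correct and matches the paper's intended argument: the paper simply asserts that the lemma follows immediately from Lemma \ref{lemdec1} since $f(1)=1$, and your induction on $r$ — base case from the $k'=0$ branch, inductive step peeling off the top block via the $k'\neq 0$ branch — is exactly the routine verification being left to the reader. The two points you flag (the gap condition $q_{r-1}+L_{r-1}<q_r$ forcing $k'<2^{q_r-1}$, and the rewriting $2\sum_{j=2}^{L}(f(j)-1)+L=2\sum_{j=1}^{L}f(j)-L$) are precisely the details that need checking.
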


We are ready to obtain a general formula for the sequence $(n_k).$ Let us introduce some notations. 
\begin{notations}\label{nota_sequences}{\rm  Let $L_1,\ldots, L_r$ be non-zero integers and $q_1,\ldots,q_r$ be integers such that $q_i+L_i<q_{i+1}$ for every $1\leq i\leq r-1$. 
We define the integers $m_i,t_i,s_i$ and $p_i$ as follows:
\begin{enumerate}
\item $m_i$ is the greatest integer such that $a_{m_i}\leq q_i+L_i-1,$
\item $t_i=q_i+L_i-1-a_{m_i},$
\item $p_i=\#\{l\in\mathbb{N}: l<m_i\hbox{ and } q_i\leq a_l\leq q_i+L_i-1\},$
\item $s_i=L_i-1-t_i-(a_{m_i}-a_{m_i-p_i}).$
\end{enumerate}
}
\end{notations}
\noindent To understand these notations, we give the following representation.

\vspace{-2.8cm}

\includegraphics[width=14cm]{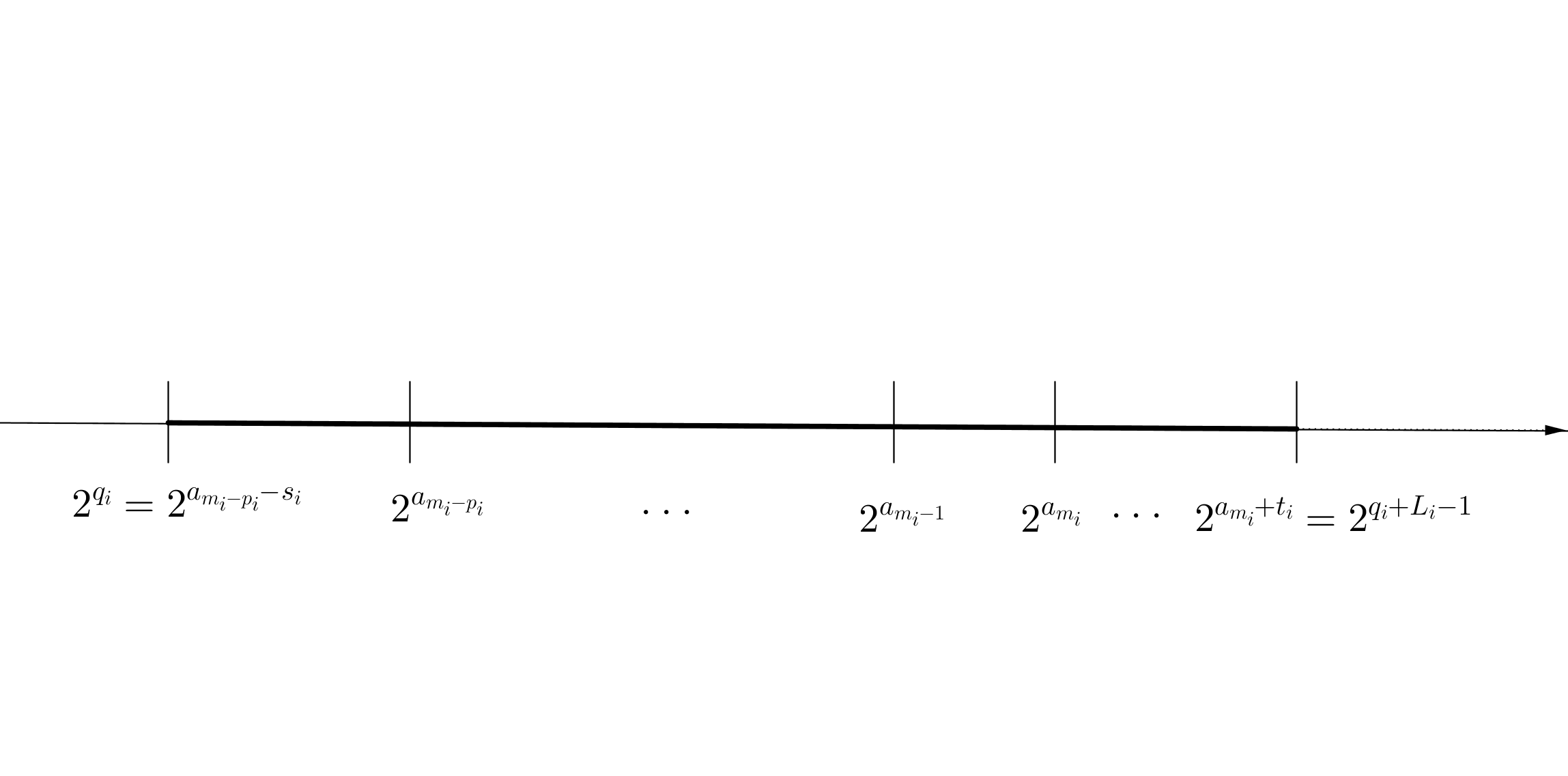}

\vspace{-1.5cm}

\noindent Now we state an explicit formula for the sequence $(n_k(f)).$ This will allow us to obtain a good asymptotic formula for this sequence.

\begin{lemma}\label{lemsumn} Using the notations (\ref{nota_sequences}), for $k=\sum_{i=1}^{r}\sum_{j=0}^{L_i-1}2^{q_i+j}$ we have
\begin{align*}
n_k(f)&=2k\left(\sum_{l=1}^{+\infty}\frac{1}{2^{a_l-1}}\right)-\sum_{i=1}^{r}\left(\sum_{u=0}^{p_i-1}\left(\sum_{j=1}^{a_{m_i-u}-a_{m_i-(u+1)}}2^{a_{m_i-u}-j+1}\right)\left(\sum_{l=m_i-u}^{+\infty}\frac{1}{2^{a_l-1}}\right)\right.&\\
&\left.+\left(\sum_{j=0}^{t_i}2^{a_{m_i}+j+1}\right)\left(\sum_{l=m_i+1}^{+\infty}\frac{1}{2^{a_l-1}}\right)
+\left(\sum_{l=1}^{s_i}2^{a_{m_i-p_i}-l+1}\right)\left(\sum_{l=m_i-p_i}^{+\infty}\frac{1}{2^{a_l-1}}\right)\right)&\\
&-2\sum_{i=1}^{r}\left(\sum_{j=a_{m_i-p_i}-s_i}^{a_{m_i}+t_i}f(j)-\sum_{j=1}^{L_i}f(j)\right)-f(L_1).&\\
\end{align*}
\end{lemma}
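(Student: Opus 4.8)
The plan is to derive the formula in Lemma \ref{lemsumn} by combining the summation formula from Lemma \ref{lemdecsum} with the explicit evaluations of the subsequence terms $n_{2^{q}}$ obtained in Lemmas \ref{lemfromuleam} and \ref{lemformuleamq}. Recall that Lemma \ref{lemdecsum} already gives, for $k=\sum_{i=1}^r\sum_{j=0}^{L_i-1}2^{q_i+j}$,
$$n_k=\sum_{i=1}^{r}\sum_{j=0}^{L_i-1}n_{2^{q_i+j}}+2\sum_{i=1}^{r}\sum_{j=1}^{L_i}f(j)-\sum_{i=1}^{r}L_i-f(L_1),$$
so that the last line of the claimed identity is immediately accounted for once we establish the value of the inner double sum $\sum_{j=0}^{L_i-1}n_{2^{q_i+j}}$ for each fixed $i$. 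Thus the whole proof reduces to a block-by-block computation: fixing one index $i$ and writing $q=q_i$, $L=L_i$, $m=m_i$, $t=t_i$, $p=p_i$, $s=s_i$, I need to show that
$$\sum_{j=0}^{L-1}n_{2^{q+j}}=\frac{2^{q}(2^{L}-1)\cdot 2}{?}\cdots$$ — more precisely that it equals the $i$-th summand of the first two lines of the stated formula, namely $(2\cdot 2^q(2^L-1))\sum_{l}2^{1-a_l}$ corrected by the three ``boundary'' sums indexed by $u$, by $t_i$, and by $s_i$. (Note $\sum_{i=1}^r 2\cdot 2^q(2^L-1)\sum_l 2^{1-a_l}=2k\sum_l 2^{1-a_l}$ since $k=\sum_i 2^{q_i}(2^{L_i}-1)$, which explains the leading term.)

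Here is how I would organize that block computation. The exponents $q+j$ for $0\le j\le L-1$ range over a window $[q,q+L-1]$; along this window the function $f$ (which counts how many $a_l$'s lie below) is piecewise constant, jumping by $1$ exactly at those $a_l$ that fall inside the window — there are $p$ such interior thresholds $a_{m-p+1}<\dots<a_{m}$, together with the endpoint behavior governed by $t$ (how far $q+L-1$ sits past $a_m$) and $s$ (how far $q$ sits before the relevant threshold $a_{m-p}$). For each exponent $2^{q+j}$ I substitute whichever of Lemma \ref{lemfromuleam} or Lemma \ref{lemformuleamq} applies: when $q+j$ equals some $a_{m'}$ I use $n_{2^{a_{m'}}}=2^{a_{m'}+1}\sum_{l=1}^{m'}2^{1-a_l}-2m'+1$, and when $q+j=a_{m'}-\text{(something)}$ strictly between consecutive thresholds I use the formula of Lemma \ref{lemformuleamq}, which in both cases has the shape $n_{2^{e}}=2^{e+1}\sum_{l=1}^{m(e)}2^{1-a_l}+(\text{linear correction in }m(e))$ where $m(e)$ is the number of $a_l$'s that are $\le e$. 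Summing a geometric series $\sum 2^{e+1}$ over each maximal sub-window on which $m(e)$ is constant produces exactly the three bracketed geometric factors in the statement (the $u$-indexed family, the $t$-indexed one, the $s$-indexed one), each multiplied by the corresponding tail $\sum_{l=m-u}^{\infty}2^{1-a_l}$ — the tails appear because $\sum_{l=1}^{m(e)}2^{1-a_l}=\sum_{l=1}^{\infty}2^{1-a_l}-\sum_{l>m(e)}2^{1-a_l}$, and regrouping the $-\sum_{l>m(e)}$ pieces across the window telescopes into tail sums starting at the successive thresholds. The linear-in-$m$ correction terms, summed over the window, reassemble into $-2\bigl(\sum_{j=a_{m-p}-s}^{a_m+t}f(j)\bigr)$, which is precisely the term subtracted in the last line of the formula (combined with the $2\sum_j f(j)$ already present from Lemma \ref{lemdecsum}, leaving the stated difference).

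The main obstacle I expect is purely bookkeeping: correctly identifying, for a fixed block $i$, the decomposition of the window $[q_i,q_i+L_i-1]$ into the sub-intervals on which $m(e)$ is constant, and checking that the endpoints of those sub-intervals are named exactly by the quantities $t_i,p_i,s_i$ of Notation \ref{nota_sequences} — in particular that the ``leftover'' length $s_i=L_i-1-t_i-(a_{m_i}-a_{m_i-p_i})$ really measures the portion of the window lying below the lowest interior threshold $a_{m_i-p_i}$, and that $p_i$ counts exactly the interior thresholds. The figure included in the text is meant to make this transparent. Once the window decomposition is pinned down, every remaining step is a finite geometric summation plus a telescoping rearrangement of the tail series, together with collecting the linear corrections; none of these presents any analytic difficulty. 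I would therefore present the proof as: (1) invoke Lemma \ref{lemdecsum} to reduce to evaluating $\sum_{j=0}^{L_i-1}n_{2^{q_i+j}}$ per block; (2) for a fixed block, partition the exponent window according to the thresholds $a_l$ inside it, reading off $t_i,p_i,s_i$; (3) on each piece substitute Lemma \ref{lemfromuleam} or \ref{lemformuleamq} and sum the resulting geometric series, using the tail identity $\sum_{l\le m(e)}2^{1-a_l}=\sum_{l\ge 1}2^{1-a_l}-\sum_{l>m(e)}2^{1-a_l}$ to produce the tail factors; (4) collect the linear-in-$m$ corrections into the $f$-sum over $[a_{m_i-p_i}-s_i,\,a_{m_i}+t_i]$; (5) sum over $i$ and note $\sum_i 2^{q_i}(2^{L_i}-1)=k$ to obtain the leading term $2k\sum_l 2^{1-a_l}$, finishing the proof.
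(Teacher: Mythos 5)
Your proposal follows essentially the same route as the paper's proof: reduce via Lemma \ref{lemdecsum} to the per-block sums $\sum_{j=0}^{L_i-1}n_{2^{q_i+j}}$, split each window into the $t_i$-, $u$- and $s_i$-indexed pieces dictated by Notation \ref{nota_sequences}, substitute Lemmas \ref{lemfromuleam} and \ref{lemformuleamq}, rewrite the partial sums $\sum_{l\le m}2^{1-a_l}$ as the full series minus its tails to extract the leading term $2k\sum_l 2^{1-a_l}$, and collect the linear corrections into $\sum_{j=a_{m_i-p_i}-s_i}^{a_{m_i}+t_i}f(j)$. This is exactly the paper's argument, so only the bookkeeping you already flagged remains to be written out.
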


\begin{proof} We only prove the lemma for $t_i<L_i,$ the other case being similar but simpler. We use the notations (\ref{nota_sequences}) to write
\begin{equation}\label{equation_n2}
\sum_{j=0}^{L_i-1}n_{2^{q_i+j}}=\sum_{j=0}^{L_i-1}n_{2^{a_{m_i}+t_i-j}}=\sum_{j=0}^{t_i}n_{2^{a_{m_i}+j}}+\sum_{j=1}^{L_i-1-s_i-t_i}n_{2^{a_{m_i}-j}}+\sum_{j=1}^{s_i}n_{2^{a_{m_i-p_i}-j}}.
\end{equation}

It remains to compute these three sums. We begin by the second one, dropping for the moment the index $i$ for sake of readability. Using Lemma \ref{lemformuleamq}, 
we write
\begin{align*}
\sum_{j=1}^{L-1-s-t}n_{2^{a_{m}-j}}&=\sum_{u=0}^{p-1}\sum_{j=1}^{a_{m-u}-a_{m-(u+1)}}n_{2^{a_{m-u}-j}}\\
&=\sum_{u=0}^{p-1}\sum_{j=1}^{a_{m-u}-a_{m-(u+1)}}\left(2^{a_{m-u}-j+1}\sum_{l=1}^{m-(u+1)}\frac{1}{2^{a_l-1}}-2(m-(u+1))+1\right).
\end{align*}
Thus, we deduce
\begin{equation}\label{equation_n3}
\begin{array}{rcl}\displaystyle\sum_{j=0}^{L-1-s-t}n_{2^{a_{m}-j}}&=&
\displaystyle\sum_{u=0}^{p-1}\left(\sum_{j=1}^{a_{m-u}-a_{m-(u+1)}}2^{a_{m-u}-j+1}\right)\left(\sum_{l=1}^{m-(u+1)}\frac{1}{2^{a_l-1}}\right)\\
&&\displaystyle -2\sum_{u=1}^{p}(m-u)(a_{m-(u-1)}-a_{m-u})+L-s-t.\end{array}
\end{equation}

In the same spirit we compute the first and third sums as follows
\begin{equation}\label{equation_n4}
\sum_{j=0}^{t}n_{2^{a_m}+j}=\left(\sum_{j=0}^{t}2^{a_m+j+1}\right)\left(\sum_{l=1}^{m}\frac{1}{2^{a_l-1}}\right)-2m(t+1)+t+1
\end{equation} 
and
\begin{equation}\label{equation_n5}\sum_{j=1}^{s}n_{2^{a_{m-p}-j}}=\left(\sum_{l=1}^{s}2^{a_{m-p}-l+1}\right)\left(\sum_{l=1}^{m-(p+1)}\frac{1}{2^{a_l-1}}\right)-2(m-(p+1))s+s.
\end{equation}

Moreover since we have by definition $t_i<a_{m_{i+1}}-a_{m_i}$ and $s_i<a_{m_i-p_i}-a_{m_i-p_i-1},$ when we gather equations (\ref{equation_n3}), (\ref{equation_n4}) and (\ref{equation_n5}), we have to compute the following sum
\begin{equation}\label{equation_n6}\begin{array}{l}
\displaystyle\sum_{u=1}^{p_i}(m_i-u)(a_{m_i-(u-1)}-a_{m_i-u})+m_i(t_i+1)+(m_i-(p_i+1))s_i\\
\quad =\displaystyle\sum_{u=1}^{p_i}\sum_{j=0}^{a_{m_i-(u-1)}-a_{m_i-u}-1}f(a_{m_i-u}+j)+
\sum_{j=0}^{t_i}f(a_{m_i}+j)+\sum_{j=1}^{s_i}f(a_{m_i-p_i}-j)\\
\quad =\displaystyle\sum_{j=a_{m_i-p_i}-s_i}^{a_{m_i}+t_i}f(j).\end{array}
\end{equation}
Thus thanks to Lemma \ref{lemdecsum} and the equations (\ref{equation_n2}), (\ref{equation_n3}), (\ref{equation_n4}), 
(\ref{equation_n5}), (\ref{equation_n6}), we deduce
\begin{align*}
n_k&=\sum_{i=1}^{r}\sum_{j=0}^{L_i-1}n_{2^{q_i+j}}+2\sum_{i=1}^{r}\sum_{j=1}^{L_i}f(j)-\sum_{i=1}^{r}L_i-f(L_1)\\
&=\sum_{i=1}^{r}\left(\sum_{u=0}^{p_i-1}\left(\sum_{j=1}^{a_{m_i-u}-a_{m_i-(u+1)}}2^{a_{m_i-u}-j+1}\right)\left(\sum_{l=1}^{m_i-(u+1)}\frac{1}{2^{a_l-1}}\right)+\left(\sum_{j=0}^{t_i}2^{a_{m_i}+j+1}\right)\left(\sum_{l=1}^{m_i}\frac{1}{2^{a_l-1}}\right)\right.\\
&\left.+\left(\sum_{l=1}^{s_i}2^{a_{m_i-p_i}-l+1}\right)\left(\sum_{l=1}^{m_i-(p_i+1)}\frac{1}{2^{a_l-1}}\right)-2\sum_{j=a_{m_i-p_i}-s_i}^{a_{m_i}+t_i}f(j)+L_i\right)\\&+2\sum_{i=1}^{r}\sum_{j=1}^{L_i}f(j)-\sum_{i=1}^{r}L_i-f(L_1).
\end{align*}
We remark that a $\sum_{i=1}^{r}L_i$ comes out from the first sum and cancels the term lying in the end of the preceding equality, we also gather the sums over $f(j)$ and we get:
\begin{align*}
n_k&=\sum_{i=1}^{r}\left(\sum_{u=0}^{p_i-1}\left(\sum_{j=1}^{a_{m_i-u}-a_{m_i-(u+1)}}2^{a_{m_i-u}-j+1}\right)\left(\sum_{l=1}^{m_i-(u+1)}\frac{1}{2^{a_l-1}}\right)+\left(\sum_{j=0}^{t_i}2^{a_{m_i}+j+1}\right)\left(\sum_{l=1}^{m_i}\frac{1}{2^{a_l-1}}\right)\right.&\\
&\left.+\left(\sum_{l=1}^{s_i}2^{a_{m_i-p_i}-l+1}\right)\left(\sum_{l=1}^{m_i-(p_i+1)}\frac{1}{2^{a_l-1}}\right)\right)+2\sum_{i=1}^{r}\left(\sum_{j=1}^{L_i}f(j)-\sum_{j=a_{m_i-p_i}-s_i}^{a_{m_i}+t_i}f(j)\right)-f(L_1).&
\end{align*}
Then, we express the partial sums $\sum_{l=1}^{N}\frac{1}{2^{a_l-1}}$ as the series minus its remainder of order $N$ which yields: 
\begin{align*}
&n_k=\left(\sum_{l=1}^{+\infty}\frac{1}{2^{a_l-1}}\right)\sum_{i=1}^{r}\left(\sum_{u=0}^{p_i-1}\left(\sum_{j=1}^{a_{m_i-u}-a_{m_i-(u+1)}}2^{a_{m_i-u}-j+1}\right)+\sum_{j=0}^{t_i}2^{a_{m_i}+j+1}+\sum_{l=1}^{s_i}2^{a_{m_i-p_i}-l+1}\right)&\\
&-\sum_{i=1}^{r}\left(\sum_{u=0}^{p_i-1}\left(\sum_{j=1}^{a_{m_i-u}-a_{m_i-(u+1)}}2^{a_{m_i-u}-j+1}\right)\left(\sum_{l=m_i-u}^{+\infty}\frac{1}{2^{a_l-1}}\right)+\left(\sum_{j=0}^{t_i}2^{a_{m_i}+j+1}\right)\left(\sum_{l=m_i+1}^{+\infty}\frac{1}{2^{a_l-1}}\right)\right.&\\
&\left.+\left(\sum_{l=1}^{s_i}2^{a_{m_i-p_i}-l+1}\right)\left(\sum_{l=m_i-p_i}^{+\infty}\frac{1}{2^{a_l-1}}\right)\right)+2\sum_{i=1}^{r}\left(\sum_{j=1}^{L_i}f(j)-\sum_{j=a_{m_i-p_i}-s_i}^{a_{m_i}+t_i}f(j)\right)-f(L_1).&
\end{align*}
Now it suffices to remark that coming back to notations with $q_i$'s, then $k$ can be expressed in the following way 
$$k=\sum_{i=1}^{r}\left(\sum_{u=0}^{p_i-1}\left(\sum_{j=1}^{a_{m_i-u}-a_{m_i-(u+1)}}2^{a_{m_i-u}-j}\right)+\sum_{j=0}^{t_i}2^{a_{m_i}+j}+\sum_{l=1}^{s_i}2^{a_{m_i-p_i}-l}\right).$$ 
Thus we obtain:
\begin{align*}
&n_k=2k\left(\sum_{l=1}^{+\infty}\frac{1}{2^{a_l-1}}\right)-\sum_{i=1}^{r}\left(\sum_{u=0}^{p_i-1}\left(\sum_{j=1}^{a_{m_i-u}-a_{m_i-(u+1)}}2^{a_{m_i-u}-j+1}\right)\left(\sum_{l=m_i-u}^{+\infty}\frac{1}{2^{a_l-1}}\right)\right.&\\
&\left.+\left(\sum_{j=0}^{t_i}2^{a_{m_i}+j+1}\right)\left(\sum_{l=m_i+1}^{+\infty}\frac{1}{2^{a_l-1}}\right)
+\left(\sum_{l=1}^{s_i}2^{a_{m_i-p_i}-l+1}\right)\left(\sum_{l=m_i-p_i}^{+\infty}\frac{1}{2^{a_l-1}}\right)\right)&\\
&+2\sum_{i=1}^{r}\left(\sum_{j=1}^{L_i}f(j)-\sum_{j=a_{m_i-p_i}-s_i}^{a_{m_i}+t_i}f(j)\right)-f(L_1).&
\end{align*}

\end{proof}

Observe that if we set $(a_m)_m=(m)_m,$ then $f(j)=j$ for every integer $j$ and Lemma \ref{lemsumn} takes the following form.

\begin{lemma}\label{retrouve_lemme} In the aforementioned case, if $k=2^n+\sum_{i=0}^{n-1}\alpha_i2^i$ with $\alpha_i\in\{0;1\}$ 
for every $0\leq i\leq n-1$, then
$$n_k=4k-2\left(\sum_{i\in I_k} L_i(i+1)\right)-\delta_k,$$
where $I_k$ stands for the set of integers $i$ such that $\alpha_i$ is the first non-zero integer of a block (of consecutive non-zero coefficients) having length $L_i$ in the dyadic decomposition of $k$.
\end{lemma}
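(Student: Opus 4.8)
The plan is to obtain Lemma~\ref{retrouve_lemme} as a direct specialization of the general formula of Lemma~\ref{lemsumn} to the case $(a_m)_m=(m)_m$. When $a_m=m$ we have $f(j)=j$ for all $j$, and the series governing the leading term is $\sum_{l\geq 1}2^{-(a_l-1)}=\sum_{l\geq 1}2^{-(l-1)}=2$, so the leading term of $n_k$ becomes exactly $2k\cdot 2=4k$. First I would rewrite the integer $k=2^n+\sum_{i=0}^{n-1}\alpha_i2^i$ of the statement in the block form $k=\sum_{i=1}^{r}\sum_{j=0}^{L_i-1}2^{q_i+j}$ with $0\leq q_1<q_2<\cdots<q_r$ and $q_i+L_i<q_{i+1}$: the positions $q_i$ are precisely the elements of $I_k$, the length $L_i$ is the length of the block of consecutive ones starting at $q_i$, and $\delta_k=L_1$ is the length of the lowest-order block of ones.

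Next I would compute the auxiliary quantities of Notations~\ref{nota_sequences} in this case. Since $a_{m_i}\leq q_i+L_i-1$ becomes an equality, one gets $m_i=q_i+L_i-1$, hence $t_i=0$, $p_i=\#\{q_i,q_i+1,\ldots,q_i+L_i-2\}=L_i-1$, and $s_i=L_i-1-t_i-(a_{m_i}-a_{m_i-p_i})=L_i-1-(L_i-1)=0$. With these values, each of the three ``geometric sum times tail'' products in Lemma~\ref{lemsumn} collapses to a constant: using $\sum_{l\geq N}2^{-(l-1)}=2^{2-N}$, the $u$-th summand of the first bracket equals $2^{m_i-u}\cdot 2^{2-(m_i-u)}=4$, so the whole bracket contributes $4p_i=4(L_i-1)$; the $t_i$-term equals $2^{m_i+1}\cdot 2^{1-m_i}=4$; and the $s_i$-term is empty since $s_i=0$. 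Thus the middle block of Lemma~\ref{lemsumn} contributes $-\sum_{i=1}^{r}\bigl(4(L_i-1)+4\bigr)=-4\sum_{i=1}^{r}L_i$.

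It remains to handle the $f$-sums. Since $a_{m_i-p_i}-s_i=q_i$ and $a_{m_i}+t_i=q_i+L_i-1$, this part of the formula is $-2\sum_{i=1}^{r}\bigl(\sum_{j=q_i}^{q_i+L_i-1}j-\sum_{j=1}^{L_i}j\bigr)$, and an elementary evaluation of the two arithmetic progressions gives $\sum_{j=q_i}^{q_i+L_i-1}j-\sum_{j=1}^{L_i}j=L_i(q_i-1)$. Together with the last term $-f(L_1)=-L_1=-\delta_k$, we obtain
$$n_k=4k-4\sum_{i=1}^{r}L_i-2\sum_{i=1}^{r}L_i(q_i-1)-\delta_k=4k-2\sum_{i=1}^{r}L_i(q_i+1)-\delta_k,$$
which is exactly $4k-2\sum_{i\in I_k}L_i(i+1)-\delta_k$ once one translates back to the $I_k$-notation. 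The only real care needed is the bookkeeping: one should check that the degenerate cases $p_i=0$ (an isolated one, $L_i=1$) and $q_1=0$ ($k$ odd) are still covered — the termwise identities above remain valid in both — and that $\delta_k$ indeed equals $L_1$, the length of the lowest block of ones. I expect this reconciliation of indices and the sign tracking in the arithmetic sums to be the most delicate point, although it requires no idea beyond the substitution itself.
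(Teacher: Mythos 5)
Your proposal is correct and follows exactly the paper's route: it specializes Lemma \ref{lemsumn} to $a_m=m$ (so $f=\mathrm{id}$, $t_i=s_i=0$, $p_i=L_i-1$, $m_i=q_i+L_i-1$), evaluates each geometric-sum-times-tail product as the constant $4$ to get the $-4\sum_i L_i$ contribution, and reduces the $f$-sums to the arithmetic-progression difference $L_i(q_i-1)$, yielding $n_k=4k-2\sum_i L_i(q_i+1)-\delta_k$. The paper's own proof is the same computation written more tersely (with $i$ standing for the block position $q_i$), and it is equally brief about the degenerate cases you flag.
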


\begin{proof} Using the notations of Lemma \ref{lemsumn}, we have: $t_i=s_i=0,$ $L_i=p_i+1,$ $m_i=i+L_i-1.$ Therefore we deduce 
\begin{align*}
n_k&=4k-4\sum_{i=1}^r(p_i+1)-2\sum_{i=1}^r\left(\sum_{j=i}^{i+L_i-1}j-\frac{L_i(L_i+1)}{2}\right)-L_1\\
&=4k-4\sum_{i=1}^rL_i-\sum_{i=1}^r(2i+L_i-1)L_i+\sum_{i=1}^rL_i(L_i+1)-L_1\\
&=4k-2\sum_{i=1}^rL_i(i+1)-L_1.
\end{align*}
This last inequality gives the result since $L_1=\delta_k.$ 
\end{proof}

Lemma \ref{retrouve_lemme} is exactly Lemma \ref{lemma_b2} announced in the previous section.\\
 
Let us return to the general situation, using the notation $(n_k(f))$ again. 
From Lemma \ref{lemsumn}, we deduce the following estimate on the sequence $(n_k(f))$ for specific 
choices of functions $f.$

\begin{lemma}\label{lemestimation} Using the previous notations, assume that $a_m=2^{2^{\adots^{2^m}}},$ where 
$2$ appears $s$ times ($s\geq 1$). Then the associated function $f_s:\mathbb{N}\rightarrow \mathbb{N}$ is given by 
$f_s(j)=m,$ for $j\in\{a_m,\dots,a_{m+1}-1\},$ and the following estimate holds:
$$2k\left(\sum_{l=1}^{+\infty}\frac{1}{2^{a_l-1}}\right)-2\log_2(k)f_s(\lfloor\log_2(k)\rfloor)
-14\log_2(k)-8f_s(\lfloor\log_2(k)\rfloor)\leq n_k^{(s)}\leq 2k\left(\sum_{l=1}^{+\infty}\frac{1}{2^{a_l-1}}\right),$$
with $n_k(f_s)=n_k^{(s)}.$
\end{lemma}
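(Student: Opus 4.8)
The plan is to read both inequalities directly off the exact formula for $n_k(f)$ furnished by Lemma~\ref{lemsumn}, specialised to $f=f_s$. Writing $k=\sum_{i=1}^{r}\sum_{j=0}^{L_i-1}2^{q_i+j}$, that formula presents $n_k^{(s)}$ as the main term $2k\bigl(\sum_{l\ge 1}2^{1-a_l}\bigr)$ minus three blocks: a ``geometric'' block $\mathcal{G}$ assembled from partial geometric sums of powers of $2$ multiplied by tails $\sum_{l\ge N}2^{1-a_l}$; an ``$f$-block'' $\mathcal{F}:=2\sum_{i=1}^{r}\bigl(\sum_{j=a_{m_i-p_i}-s_i}^{a_{m_i}+t_i}f_s(j)-\sum_{j=1}^{L_i}f_s(j)\bigr)$; and the single term $f_s(L_1)$. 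So it suffices to check that $\mathcal{G}$, $\mathcal{F}$ and $f_s(L_1)$ are all $\ge 0$ (which gives the upper bound $n_k^{(s)}\le 2k\sum_{l\ge 1}2^{1-a_l}$), and then to bound $\mathcal{G}+\mathcal{F}+f_s(L_1)$ from above by $2\log_2(k)f_s(\lfloor\log_2(k)\rfloor)+14\log_2(k)+8f_s(\lfloor\log_2(k)\rfloor)$ for $k$ large.

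The observation that makes everything transparent is that the index range $\{a_{m_i-p_i}-s_i,\dots,a_{m_i}+t_i\}$ consists of exactly the $L_i$ consecutive integers $q_i,q_i+1,\dots,q_i+L_i-1$: from Notation~\ref{nota_sequences} one has $a_{m_i-p_i}-s_i=q_i$, and $(a_{m_i}+t_i)-(a_{m_i-p_i}-s_i)+1=a_{m_i}-a_{m_i-p_i}+t_i+s_i+1=L_i$. Hence $\bigcup_i\{q_i,\dots,q_i+L_i-1\}$ is precisely the set of positions of the ones in the binary expansion of $k$, and $\mathcal{F}=2\sum_{j:\,\mathrm{bit}_j(k)=1}f_s(j)-2\sum_{i}\sum_{j=1}^{L_i}f_s(j)$. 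Since $f_s$ is non-decreasing and the lower endpoints satisfy $q_i\ge 1$ (the position-$0$ bit being harmless, so one may assume $k$ even), a termwise comparison gives $\sum_{j=q_i}^{q_i+L_i-1}f_s(j)\ge\sum_{j=1}^{L_i}f_s(j)$, so $\mathcal{F}\ge 0$; $\mathcal{G}\ge 0$ trivially as a sum of products of nonnegative numbers; and $f_s(L_1)\ge 0$. This already yields the upper bound.

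For the lower bound I would estimate the three blocks separately. Each of the (at most $\lfloor\log_2(k)\rfloor+1$) ones of $k$ lies at a position $\le\lfloor\log_2(k)\rfloor$, so $\sum_{j:\,\mathrm{bit}_j(k)=1}f_s(j)\le\sum_{j=1}^{\lfloor\log_2(k)\rfloor}f_s(j)\le\lfloor\log_2(k)\rfloor f_s(\lfloor\log_2(k)\rfloor)$, whence $\mathcal{F}\le 2\log_2(k)f_s(\lfloor\log_2(k)\rfloor)$; this is the dominant correction. The block $\mathcal{G}$, despite its triple-sum appearance, is only of lower order: because $(a_l)$ is rapidly increasing one has $\sum_{l\ge N}2^{1-a_l}\le 2^{2-a_N}$, while each inner geometric sum of the type $\sum_j 2^{a-j+1}$ is $<2^{a+1}$ (for the middle summand one uses $t_i\le a_{m_i+1}-a_{m_i}-1$ so that $\sum_{j=0}^{t_i}2^{a_{m_i}+j+1}<2^{a_{m_i+1}+1}$), so multiplying a factor $<2^{a+1}$ against a tail $\le 2^{2-a}$ bounds every product in $\mathcal{G}$ by $2^{3}=8$. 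There are $p_i$ products of the first type in block $i$ with $\sum_i p_i\le f_s(\lfloor\log_2(k)\rfloor)$ (the number of $a_l$ in $[0,\lfloor\log_2(k)\rfloor]$), plus at most two more per block with $r\le\lfloor\log_2(k)\rfloor+1$ blocks, so $\mathcal{G}\le 8f_s(\lfloor\log_2(k)\rfloor)+16(\lfloor\log_2(k)\rfloor+1)=O\bigl(\log_2(k)+f_s(\lfloor\log_2(k)\rfloor)\bigr)$. Finally $f_s(L_1)\le f_s(\lfloor\log_2(k)\rfloor)$. Adding these bounds and absorbing the lower-order constants gives the claimed inequality for all sufficiently large $k$. (The degenerate case $t_i\ge L_i$, where block $i$ contains no $a_l$, is handled as in the proof of Lemma~\ref{lemsumn} and is simpler.)

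The one genuinely delicate point is the size hierarchy: $\mathcal{G}$ looks at first glance comparable to $\mathcal{F}$, and the argument rests on recognising that the rapid growth of $(a_l)$ makes each tail $\sum_{l\ge N}2^{1-a_l}$ comparable to its leading term $2^{1-a_N}$, which then cancels the $2^{a_N}$-type geometric factor multiplying it and collapses every summand of $\mathcal{G}$ to an absolute constant. Once this cancellation is in place, matching the explicit constants $14$ and $8$ is routine book-keeping; the tower growth of $(a_l)$ is otherwise used only to pin down $f_s$, hence the precise form of the error term $2\log_2(k)f_s(\lfloor\log_2(k)\rfloor)$.
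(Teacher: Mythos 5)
Your proof follows essentially the same route as the paper's: both read the two bounds off the exact formula of Lemma \ref{lemsumn}, collapse each summand of the geometric block to an absolute constant via the tail estimate for $\sum_{l\ge N}2^{1-a_l}$, and isolate $2\log_2(k)f_s(\lfloor\log_2(k)\rfloor)$ as the dominant correction (the paper controls the $f$-block by subadditivity of $f_s$ where you use monotonicity together with the identification of the index range with $\{q_i,\dots,q_i+L_i-1\}$ --- an immaterial difference). The only slippage is quantitative: your per-product bound of $8$ and count of roughly $16r$ extra products yield $16\log_2(k)+9f_s(\lfloor\log_2(k)\rfloor)+O(1)$ rather than $14\log_2(k)+8f_s(\lfloor\log_2(k)\rfloor)$, so to reach the stated constants you need the paper's sharper tail estimate $\sum_{l\ge N}2^{-a_l}\le\frac{7}{4}2^{-a_N}$ (giving $7$ per product), not merely the ``routine book-keeping'' you invoke.
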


\begin{proof} We need Lemma \ref{lemsumn} and its notations. The proof of the upper bound is obvious. 
For the lower bound, observe first that the subadditivity of $f_s$ implies that for $k=\sum_{i=1}^{r}\sum_{j=0}^{L_i-1}2^{q_i+j},$ 
$$\sum_{j=1}^{L_i}f_s(j)-\sum_{j=a_{m_i-p_i}-s_i}^{a_{m_i}+t_i}f_s(j)=
\sum_{j=1}^{L_i}\left(f_s(j)-f_s(a_{m_i-p_i}-s_i-1+j)\right)\geq -L_if_s(a_{m_i-p_i}-s_i).$$
In addition, since for every $u\geq 1$, we have $a_{u}+2<a_{u+2}$ and $\sum_{l=q}^{+\infty}2^{-j}= 2^{1-q}$, we obtain:
\begin{align*}
\left(\sum_{j=0}^{t_i}2^{a_{m_i}+j+1}\right)\left(\sum_{l=m_i+1}^{+\infty}\frac{1}{2^{a_l-1}}\right)&=4\left(\sum_{j=0}^{t_i}2^{a_{m_i}+j}\right)\left(\sum_{l=m_i+1}^{+\infty}\frac{1}{2^{a_l}}\right)&\\
&\leq 4\left(\sum_{j=0}^{t_i}2^{a_{m_i}+j}\right)\left(\frac{1}{2^{a_{m_i+1}}}+\frac{1}{2^{a_{m_i+1}+1}}+\frac{1}{2^{a_{m_i+1}+2}}\right)&\\
&\leq \frac{7}{2^{a_{m_i+1}-a_{m_i}}}\left(2^{t_i+1}-1\right)&\\
&\leq \frac{7}{2^{a_{m_i+1}-(a_{m_i}+t_i+1)}}&\\
&\leq7.&
\end{align*}
In the same spirit, we also have
\begin{align*}
\left(\sum_{l=1}^{s_i}2^{a_{m_i-p_i}-l+1}\right)\left(\sum_{l=m_i-p_i}^{+\infty}\frac{1}{2^{a_l-1}}\right)&=4\left(\sum_{l=1}^{s_i}2^{a_{m_i-p_i}-l}\right)\left(\sum_{l=m_i-p_i}^{+\infty}\frac{1}{2^{a_l}}\right)&\\
&\leq 4\left(\sum_{l=1}^{s_i}2^{a_{m_i-p_i}-l}\right)\left(\frac{1}{2^{a_{m_i-p_i}}}+\frac{1}{2^{a_{m_i-p_i}+1}}+\frac{1}{2^{a_{m_i-p_i}+2}}\right)&\\
&\leq 7\left(\sum_{l=1}^{s_i}2^{-l}\right)&\\
&\leq 7.&
\end{align*}
The same method gives again
$$\sum_{u=0}^{p_i-1}\left(\sum_{j=1}^{a_{m_i-u}-a_{m_i-(u+1)}}2^{a_{m_i-u}-j+1}\right)\left(\sum_{l=m_i-u}^{+\infty}\frac{1}{2^{a_l-1}}\right)\leq7\sum_{u=0}^{p_i-1}\left(\sum_{j=1}^{a_{m_i-u}-a_{m_i-(u+1)}}2^{-j}\right)\leq 7p_i.$$
Finally we gather these estimates and we obtain 
\begin{align*}
n_k^{(s)}&\geq 2k\left(\sum_{l=1}^{+\infty}\frac{1}{2^{a_l-1}}\right)-
7\sum_{i=1}^{r}(2+p_i)-2\sum_{i=1}^{r}L_if_s(a_{m_i-p_i}-s_i)-f_s(L_1)&\\
&\geq 2k\left(\sum_{l=1}^{+\infty}\frac{1}{2^{a_l-1}}\right)-7(2r+m_r)  -2\left(\sum_{i=1}^{r}L_i\right)f_s(a_{m_r}+t_r)-f_s(L_1).
\end{align*}
Using the fact that $a_{m_r}+t_i=q_r+L_r-1\leq \log_2(k)<q_r+L_r$ and $k=\sum_{i=1}^{r}\sum_{j=0}^{L_i-1}2^{q_i+j},$ we get
\begin{align*}
n_k^{(s)}&\geq 2k\left(\sum_{l=1}^{+\infty}\frac{1}{2^{a_l-1}}\right)-7(2\log_2(k)+f_s(\lfloor\log_2(k)\rfloor))
-2\log_2(k)f_s(\lfloor\log_2(k)\rfloor)-f_s(\lfloor\log_2(k)\rfloor)&\\
&\geq 2k\left(\sum_{l=1}^{+\infty}\frac{1}{2^{a_l-1}}\right)-2\log_2(k)f_s(\lfloor\log_2(k)\rfloor) -14\log_2(k)-
8f_s(\lfloor\log_2(k)\rfloor).
\end{align*}
This finishes the proof.
\end{proof}

We now prove that the sequence $(n_k^{(s)})$ constructed above not only has positive lower density but has also positive lower 
$\tilde{B}_s$-density, for every $s\geq 2.$

\begin{lemma}\label{dens_tilde} We have $\underline{d}_{\tilde{B}_s}((n_k^{(s)}))>0.$
\end{lemma}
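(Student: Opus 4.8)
The plan is to mimic exactly the proof of Lemma~\ref{lemma_b2}, replacing the polynomial $\log^2$-correction by the $\tilde B_s$-weight and the estimate of Proposition~\ref{prop_dens_b2} by the sharper estimate of Lemma~\ref{lemestimation}. Write $\varphi_s(k)=\sum_{l=1}^k e^{l/h_s(l)}$ for the summatory function attached to $\tilde B_s$, so that by Example~\ref{examplelogdensity}\eqref{examplelogdensity7} one has $\varphi_s(k)\sim h_s(k)e^{k/h_s(k)}$. By Lemma~\ref{LemmaDensInfCalc} (whose hypotheses are clearly met since $e^{k/h_s(k)}/\varphi_s(k)\to 0$) we have
\[
\underline{d}_{\tilde B_s}\bigl((n_k^{(s)})\bigr)=\liminf_{k\to+\infty}\frac{\sum_{j=1}^{k}e^{n_j^{(s)}/h_s(n_j^{(s)})}}{\varphi_s(n_k^{(s)})}.
\]
First I would record, from Lemma~\ref{lemestimation}, that $n_k^{(s)}=\sigma_s\, k+R_k$ where $\sigma_s=2\sum_{l\ge 1}2^{1-a_l}$ is a fixed constant and the remainder satisfies $|R_k|=O\!\bigl(\log_2(k)\,f_s(\lfloor\log_2 k\rfloor)\bigr)$; in particular $R_k=o(k/h_s(k))$ and even $R_k/h_s(n_k^{(s)})$ stays bounded, which is the one fact about the correction term that makes everything go through. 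Throwing away the first $N$ terms and using monotonicity, it suffices to bound from below, for $N$ fixed and large,
\[
\frac{\sum_{j=N}^{k}e^{n_j^{(s)}/h_s(n_j^{(s)})}}{h_s(n_k^{(s)})\,e^{n_k^{(s)}/h_s(n_k^{(s)})}}.
\]

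The core computation is a summation by parts, identical in spirit to the one behind Example~\ref{examplelogdensity}\eqref{examplelogdensity7}: since $j\mapsto e^{j/h_s(j)}$ grows like $e^{j/h_s(j)}$ with $(j/h_s(j))'\sim 1/h_s(j)$, one gets $\sum_{j=N}^{k}e^{(\sigma_s j+R_{?})/h_s(\sigma_s j+R_?)}\sim h_s(\sigma_s k)\,e^{n_k^{(s)}/h_s(n_k^{(s)})}/\sigma_s\cdot e^{c}$ for a suitable bounded quantity $c$ coming from $R_k$; dividing by $h_s(n_k^{(s)})e^{n_k^{(s)}/h_s(n_k^{(s)})}$ and using $h_s(\sigma_s k)\sim h_s(k)\sim h_s(n_k^{(s)})$ (because $h_s$ is slowly varying and $n_k^{(s)}\asymp k$), the whole ratio converges to a strictly positive constant of the form $e^{-c_0}/\sigma_s$. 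Hence the $\liminf$ is strictly positive.

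The step I expect to be the main obstacle is controlling the error term in the exponent uniformly: one must check that
\[
\frac{n_j^{(s)}}{h_s(n_j^{(s)})}-\frac{\sigma_s j}{h_s(\sigma_s j)}=\frac{R_j}{h_s(\sigma_s j)}+O\!\Bigl(\frac{\sigma_s j\,|R_j|}{j\,h_s(j)^2}\cdot\frac{1}{\log j}\Bigr)
\]
remains bounded (not merely $o(j/h_s(j))$) as $j\to\infty$, so that the ratio of the two exponentials stays pinched between two positive constants. This is where the explicit bound $|R_j|\le 2\log_2(j)f_s(\lfloor\log_2 j\rfloor)+14\log_2 j+8f_s(\lfloor\log_2 j\rfloor)$ of Lemma~\ref{lemestimation} is used in an essential way: dividing by $h_s(\sigma_s j)=\log(\sigma_s j)\log^{(s)}(\sigma_s j)$, the factor $\log_2 j$ is absorbed by $\log(\sigma_s j)$, and $f_s(\lfloor\log_2 j\rfloor)$ — which is of order $\log^{(s)}(j)$ by the tower definition $a_m=2^{2^{\adots^{2^m}}}$ — is absorbed by $\log^{(s)}(\sigma_s j)$, leaving a bounded quantity. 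Once this uniform boundedness is in hand, a Taylor expansion as in the proof of Lemma~\ref{lemma_b2} together with the summation by parts finishes the argument, giving $\underline{d}_{\tilde B_s}((n_k^{(s)}))>0$.
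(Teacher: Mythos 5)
Your proof is correct and follows essentially the same route as the paper's: reduce via Lemma \ref{LemmaDensInfCalc} and (\ref{examplelogdensity7}) of Example \ref{examplelogdensity}, use Lemma \ref{lemestimation} to write $n_k^{(s)}$ as a linear term minus a remainder that is $O(h_s(k))$, and conclude by summation by parts that the ratio tends to a positive constant of the form $e^{-C_2}/C_1$. The only quibble is that $f_s(\lfloor\log_2 j\rfloor)$ is in fact of order $\log^{(s+1)}(j)$ rather than $\log^{(s)}(j)$, which only makes the absorption into $h_s$ easier.
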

\begin{proof} According to (\ref{examplelogdensity7}) from Example \ref{examplelogdensity}, we write
$$\underline{d}_{B_s}(n_k^{(s)})=
\liminf_{k\rightarrow +\infty}\left(
\frac{\sum_{j=1}^{k}e^{n_j^{(s)}/h_s(n_j^{(s)})}}
{h_s(n_k^{(s)})e^{n_k^{(s)}/h_s(n_k^{(s)})}
}\right).$$
 
Observe that Lemma \ref{lemestimation} ensures the existence of two constants $C_1,C_2>1$ such that for $N$ large enough, 
$$\frac{\sum_{j=N}^{k}e^{n_j^{(s)}/h_s(n_j^{(s)})}}
{h_s(n_k^{(s)})e^{n_k^{(s)}/
h_s(n_k^{(s)})}}
\geq 
\frac{\sum_{j=N}^{k}e^{(C_1j-C_2h_s(j))/h_s(C_1j-C_2h_s(j))}}
{h_s(C_1 k)e^{C_1 k/h_s(C_1 k)}}.$$

A summation by parts gives
$$\sum_{j=N}^{k}e^{(C_1j-C_2h_s(j))/h_s(C_1j-C_2h_s(j))}\sim 
\frac{h_s(k)}{C_1} e^{(C_1k-C_2h_s(k))/h_s(C_1k-C_2h_s(k))},\hbox{ as }k\rightarrow +\infty.$$ 
Then, a similar computation as those needed for (\ref{examplelogdensity7}) from Example \ref{examplelogdensity} leads to the following estimate 
$$\frac{\sum_{j=N}^{k}e^{(C_1j-C_2h_s(j))/h_s(C_1j-C_2h_s(j))}}
{h_s(C_1 k)e^{C_1 k/h_s(C_1 k)}}\sim \frac{e^{-C_2}}{C_1},\hbox{ as }k\rightarrow +\infty,$$ 
which gives the desired conclusion.
\end{proof}

This allows to prove the following combinatorial lemma which extends Lemma \ref{Lemgrope}.

\begin{lemma}\label{Lemgropef} There exist pairwise disjoint subsets $B^{(s)}(l,\nu),$ $l,\nu\geq 1,$ of 
$\mathbb{N}$ having positive $\tilde{B}_s$ density such that, for any $n\in B^{(s)}(l,\nu)$ and $m\in B^{(s)}(k,\mu)$, we 
have that $n\geq f_l(\nu)$ and 
$$\vert n-m\vert\geq f_l(\nu)+f_l(\mu)\hbox{ if }n\neq m.$$
\end{lemma}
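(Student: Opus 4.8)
The plan is to mimic the classical construction behind Lemma~\ref{Lemgrope} (as reviewed in Section~\ref{const_classical}), but replacing the weights attached to the dyadic blocks by the function $f_s$ and using the sequence $(n_k^{(s)})$ in place of the original $(n_k)$. First I would fix $s\ge 2$ and recall the partition of $\mathbb{N}$ into the sets $I(l,\nu)$, $l,\nu\ge 1$, consisting of the integers whose dyadic expansion reads $(0,\dots,0,1,\dots,1,0,\ast)$ with $l-1$ leading zeros followed by exactly $\nu$ ones and then one zero; these are (translated copies of) arithmetic progressions. As before, for $k\in I(l,\nu)$ set $\delta_k=\nu$, so that $\delta_k\le\log_2(k)+1$, and let $(n_k^{(s)})=(n_k(f_s))$ be the sequence defined by the induction $n_1(f_s)=f_s(1)=1$, $n_k(f_s)=n_{k-1}(f_s)+f_s(\delta_{k-1})+f_s(\delta_k)$. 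Then define $B^{(s)}(l,\nu):=\{\,n_k^{(s)}\ :\ k\in I(l,\nu)\,\}$.

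Next I would verify the separation property. By construction, for $i\ne j$ one has, telescoping the induction and using that $f_s$ is nonnegative and increasing,
$$|n_i^{(s)}-n_j^{(s)}|\ \ge\ f_s(\delta_i)+f_s(\delta_j),$$
exactly the analogue of the inequality $|n_i-n_j|\ge\delta_i+\delta_j$ recalled after Lemma~\ref{Lemgrope}; indeed if, say, $i<j$ then $n_j^{(s)}-n_i^{(s)}=f_s(\delta_i)+2\sum_{l=i+1}^{j-1}f_s(\delta_l)+f_s(\delta_j)\ge f_s(\delta_i)+f_s(\delta_j)$. Hence for $n=n_i^{(s)}\in B^{(s)}(l,\nu)$ (so $\delta_i=\nu$) and $m=n_j^{(s)}\in B^{(s)}(k,\mu)$ (so $\delta_j=\mu$) with $n\ne m$ we get $|n-m|\ge f_s(\nu)+f_s(\mu)$; wait --- here the statement uses $f_l(\nu)+f_l(\mu)$, so one must be slightly careful about which index the function carries. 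I would address this by noting that the family is built from a single $f_s$ and reading the claim with $f_l=f_s$ for the relevant labels, or, if genuinely a nested family indexed by $l$ is intended, by taking the diagonal construction over $l$ exactly as in \cite[Lemma 9.5]{Grope}; in either case the monotonicity of the functions involved makes the separation bound only stronger. Likewise $n=n_i^{(s)}\ge n_1^{(s)}\ge f_s(\nu)$ can be arranged by discarding finitely many small indices from each $I(l,\nu)$, which does not affect the density (arithmetic progressions retain their lower density under removal of finitely many terms). Pairwise disjointness is immediate since $k\mapsto n_k^{(s)}$ is strictly increasing and the $I(l,\nu)$ are pairwise disjoint.

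Finally I would establish the density statement. Since each $I(l,\nu)$ is an arithmetic progression, $\underline{d}_{\tilde{B}_s}\big(B^{(s)}(l,\nu)\big)$ is a positive multiple of $\underline{d}_{\tilde{B}_s}\big((n_k^{(s)})\big)$ up to the comparison afforded by Lemma~\ref{LemmaDensInfCalc}: writing $I(l,\nu)=\{k_1<k_2<\cdots\}$ with $k_p\sim c\,p$ for a constant $c=c(l,\nu)>1$, Lemma~\ref{LemmaDensInfCalc} gives
$$\underline{d}_{\tilde{B}_s}\big(B^{(s)}(l,\nu)\big)=\liminf_{p\to+\infty}\frac{\sum_{j=1}^{p}e^{n_{k_j}^{(s)}/h_s(n_{k_j}^{(s)})}}{\sum_{j=1}^{n_{k_p}^{(s)}}e^{j/h_s(j)}},$$
and the same summation-by-parts estimate as in the proof of Lemma~\ref{dens_tilde} --- which only used the two-sided bound of Lemma~\ref{lemestimation} on $n_k^{(s)}$, namely $n_k^{(s)}= \Theta(k)$ with a controlled lower-order correction of size $O(\log_2(k)\,f_s(\lfloor\log_2(k)\rfloor))$ --- shows this $\liminf$ is a positive constant times $e^{-C_2}/C_1>0$. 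Thus $\underline{d}_{\tilde{B}_s}\big(B^{(s)}(l,\nu)\big)>0$ for every $l,\nu$. The main obstacle I anticipate is purely bookkeeping: reconciling the index on $f$ in the statement ($f_l$) with the single function $f_s$ actually used, and making sure the thinning of the progressions $I(l,\nu)$ needed to force $n\ge f_l(\nu)$ is compatible across all pairs $(l,\nu)$ simultaneously; both are handled by the standard diagonal argument of \cite[Lemma 9.5]{Grope} together with the monotonicity of $f_s$, so no genuinely new estimate beyond Lemmas~\ref{lemestimation} and~\ref{dens_tilde} is required.
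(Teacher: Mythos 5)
Your proposal is correct and follows essentially the same route as the paper: define $B^{(s)}(l,\nu)=\{n_k^{(s)}:k\in I(l,\nu)\}$, get the separation by telescoping $n_j^{(s)}-n_m^{(s)}=f_s(\delta_m)+2\sum_{i=m+1}^{j-1}f_s(\delta_i)+f_s(\delta_j)$, and transfer the density estimate of Lemma~\ref{dens_tilde} to the arithmetic progressions $I(l,\nu)$ via Lemma~\ref{lemestimation}; the $f_l$ in the statement is indeed to be read as $f_s$, as the paper's own proof and its use in Theorem~\ref{main_theorem} confirm. The only superfluous step is the thinning used to force $n\geq f_s(\nu)$ (and the intermediate bound $n_i^{(s)}\geq n_1^{(s)}\geq f_s(\nu)$ is vacuous since $n_1^{(s)}=1$): the defining formula $n_k^{(s)}=2\sum_{i=1}^{k-1}f_s(\delta_i)+f_s(\delta_k)\geq f_s(\delta_k)=f_s(\nu)$ already gives this for every $k\in I(l,\nu)$ with no terms discarded.
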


\begin{proof} We consider the sequence $(n_{k}^{(s)})$ constructed above and also sets $I(l,\nu)$ constructed in \cite{Grope} 
that we recalled just after Lemma \ref{Lemgrope}. We also define $B^{(s)}(l,\nu):=\{n_{k}^{(s)}; k\in I(l,\nu)\}.$ These sets are clearly pairwise disjoint 
since the sets $I(l,\nu)$ are, and the sequence is $(n_{k}^{(s)})$ increasing. Moreover by definition of 
the sets $I(l,\nu)$, that are arithmetic sequences, and Lemma \ref{lemestimation}, the conclusion of Lemma \ref{dens_tilde} remains true, 
i.e. the sets $B^{(s)}(l,\nu)$ 
have positive lower $\tilde{B}_s$-density. 
Then by definition of $n_{k}^{(s)}$ from (\ref{Eqnkmod}), we get $n_{k}^{(s)}\geq f_s(\delta_k)=f_s(\nu)$.
Finally, if $n_{j}^{(s)}\in B^{(s)}(l,\nu)$ and $n_{m}^{(s)}\in B^{(s)}(k,\mu)$ with $j>m$, then
$$n_{j}^{(s)}-n_{m}^{(s)}=f(\delta_m)+2\sum_{i=m+1}^{j-1}f(\delta_i)+f(\delta_j)\geq f(\mu)+f(\nu).$$
\end{proof}

This strengthened version of Lemma \ref{Lemgrope} allows us to give a stronger conclusion to the so-called Frequent Hypercyclicity Criterion whose proof will be only sketched since it is an adaptation of the classical proof given in \cite[Theorem 9.9]{Grope}. 

\begin{theorem}\label{main_theorem} Let $T$ be an operator on a separable Fr\'echet space $X.$ If there is a dense 
subset $X_0$ of $X$ and a map $S:X_0\rightarrow X_0$ such that, for any $x\in X_0,$ 
\begin{enumerate}[(i)]
\item $\displaystyle\sum_{n=0}^{\infty}T^nx$ converges unconditionally,
\item $\displaystyle\sum_{n=0}^{\infty}S^nx$ converges unconditionally,
\item $TSx=x,$
\end{enumerate}
then $T$ is $\tilde{B}_s$-frequently hypercyclic, for every $s\geq 2.$ 
\end{theorem}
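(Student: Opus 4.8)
The plan is to mimic the classical proof of the Frequent Hypercyclicity Criterion (see \cite[Theorem 9.9]{Grope}), replacing the combinatorial input Lemma \ref{Lemgrope} by its strengthened version Lemma \ref{Lemgropef}. Fix $s\geq 2$ and a dense sequence $(y_l)_{l\geq 1}$ in $X$. Fix also a decreasing basis of balanced open neighborhoods $(U_\nu)_{\nu\geq 1}$ of $0$ chosen so that the unconditional convergence of $\sum T^n x$ and $\sum S^n x$ is quantitatively controlled on a countable dense set; more precisely, working with the vectors $y_l$ and using conditions (i) and (ii), one selects inductively the $U_\nu$ so that for all $n$, $T^n(\text{partial tail sums of }S^j y_l)$ and $S^n(\cdots)$ stay small. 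This is exactly the standard preparation and I will not reproduce it.

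Next I would invoke Lemma \ref{Lemgropef} to obtain the pairwise disjoint sets $B^{(s)}(l,\nu)$ with positive lower $\tilde B_s$-density and the separation property $|n-m|\geq f_l(\nu)+f_l(\mu)$ for $n\in B^{(s)}(l,\nu)$, $m\in B^{(s)}(k,\mu)$, $n\neq m$, together with $n\geq f_l(\nu)$. Then I set
$$x=\sum_{l,\nu\geq 1}\ \sum_{n\in B^{(s)}(l,\nu)} T^{\,?}\!\big(\text{correction of } y_l\big),$$
more precisely the usual candidate: for $n\in B^{(s)}(l,\nu)$ one adds the term $S^{n}u_{l,\nu}$, where $u_{l,\nu}\in X_0$ is a suitable approximation of $y_l$, and one checks using the separation property and the quantitative unconditional convergence that the double series converges in $X$ and that $T^n x$ is close to $y_l$ whenever $n\in B^{(s)}(l,\nu)$ (the near-diagonal term gives $y_l$, the contributions from other blocks are small because of the large gaps $f_l(\nu)+f_l(\mu)$, which is precisely where the separation condition is used). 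Consequently, for every $l$ and every $\nu$, the set $N(x,y_l+U_\nu)$ contains $B^{(s)}(l,\nu)$, hence has positive lower $\tilde B_s$-density.

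Finally, since $(y_l)$ is dense and the $U_\nu$ form a neighborhood basis of $0$, any nonempty open set $U\subset X$ contains some $y_l+U_\nu$, so $N(x,U)\supset N(x,y_l+U_\nu)\supset B^{(s)}(l,\nu)$ and therefore $\underline{d}_{\tilde B_s}(N(x,U))>0$. Thus $x$ is a $\tilde B_s$-frequently hypercyclic vector for $T$, which is the assertion. The only genuinely new ingredient compared to \cite[Theorem 9.9]{Grope} is Lemma \ref{Lemgropef}, whose proof we gave above; the rest is the verbatim adaptation of the classical argument, and the one point requiring a small amount of care is that the separation bounds now involve $f_l(\nu)+f_l(\mu)$ rather than $\nu+\mu$, but since $f_l$ is increasing and $f_l(\nu)\geq 1$, the same telescoping estimates on the tails go through unchanged. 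For this reason we only sketch the argument and refer to \cite{Grope} for the remaining details.
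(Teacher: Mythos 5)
Your proposal takes essentially the same route as the paper: both reduce the theorem to the classical proof of the Frequent Hypercyclicity Criterion in \cite[Theorem 9.9]{Grope}, substituting Lemma \ref{Lemgropef} for Lemma \ref{Lemgrope}, choosing $N_l$ by unconditional convergence, and defining the candidate vector as a sum of the form $\sum_n S^n z_n$ over the sets $B^{(s)}(l,\cdot)$. One point in your sketch needs correcting, however: the reason the weaker separation $|n-m|\geq f_s(\nu)+f_s(\mu)$ still suffices is \emph{not} that ``$f_s$ is increasing and $f_s(\nu)\geq 1$'' --- that gives no smallness at all, since the telescoping tail estimates require the gaps to exceed $N_l$ and $N_k$; rather, one uses that $f_s$ is unbounded and, for each $l$, replaces the classical choice $\nu=N_l$ by $\nu=M_l$ with $f_s(M_l)\geq N_l$ (so the sets actually used are $B^{(s)}(l,M_l)$, with $M_l$ necessarily enormous), which is precisely the choice the paper makes explicit.
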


\begin{proof} Let $(y_n)$ be a dense sequence from $X_0$ that is dense in $X$. Let $\Vert .\Vert$ denote an $F$-norm that defines the topology of $X.$ 
The unconditional convergence of the series $(ii)$ and $(iii)$ allows to find, for every $l\in\N$, an integer $N_l\geq 1$ such that for every $j\leq l$ and every finite set $F\subset \{N_l;N_l+1;\ldots\}$,
$$\left\Vert \sum_{n\in F}T^{n}y_l\right\Vert\leq \frac{1}{l2^l}\text{ and }\left\Vert \sum_{n\in F}S^{n}y_l\right\Vert\leq \frac{1}{l2^l}.$$
Now let $(M_l)$ be an increasing sequence such that $f_s(M_l)\geq N_l$ and $(f_s(M_l))$ is increasing. We also define
$$B^{(s)}:=\bigcup_{l=1}^{+\infty}B^{(s)}(l,M_l)$$
and 
$$z_n=y_l\text{ if } n\in B^{(s)}(l,M_l).$$
Finally we claim that 
$$x=\sum_{n\in\N}S^n(z_n)$$
defines a $\tilde{B}_s$-frequently hypercyclic vector for $T$.
From this point, the proof is just an adaptation of the proof of the Frequent Hypercyclicity Criterion from \cite{Grope} replacing Lemma \ref{Lemgrope} by Lemma \ref{Lemgropef} stated above.
\end{proof}

We may also deduce the following corollary using Lemma \ref{lemmacomp}.

\begin{corollary}
Under the assumptions of the previous proposition, the operator $T$ is $B_r$-frequently hypercyclic for every $r>1$.
\end{corollary}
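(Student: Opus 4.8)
The plan is to deduce the corollary directly from Theorem \ref{main_theorem} together with the comparison inequalities of Lemma \ref{lemmacomp}. Recall that Theorem \ref{main_theorem} tells us that an operator $T$ satisfying the three hypotheses of the Frequent Hypercyclicity Criterion is $\tilde{B}_s$-frequently hypercyclic for every integer $s\geq 2$; that is, for every non-empty open set $U\subset X$ there is a single vector $x$ (the one constructed in the proof of the theorem) such that $\underline{d}_{\tilde{B}_s}(N(x,U))>0$. So the only thing left to check is that positive lower $\tilde{B}_s$-density forces positive lower $B_r$-density for each real $r>1$, for a suitable choice of $s$ depending on $r$.

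First I would fix $r>1$ and pick an integer $s$ large enough that $s\geq 2$ and, in the notation of Lemma \ref{lemmacomp}, the matrix $\tilde{B}_s$ lies above $B_r$ in the scale; concretely one checks from Definition \ref{def1} that the weight sequences $\alpha_k = e^{k/h_s(k)}$ and $\beta_k = e^{k/\log^r(k)}$ satisfy that $\alpha_k/\beta_k$ is eventually decreasing to zero whenever $h_s$ grows slower than $\log^r$ eventually, which happens as soon as $s$ is chosen with $r < 1 \cdot \log^{(s-1)}$-type growth dominating — more simply, Lemma \ref{lemmacomp} already records the ordering $\underline{d}_{\tilde{B}_{l'}}(E)\leq \underline{d}_{\tilde{B}_l}(E)\leq \underline{d}_{B_t}(E)\leq \underline{d}_{B_{t'}}(E)$ for $2\leq l\leq l'$ and $1<t\leq t'$, so it suffices to invoke that chain. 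Applying the relevant inequality from Lemma \ref{lemmacomp} to the set $E=N(x,U)$ gives $\underline{d}_{B_r}(N(x,U))\geq \underline{d}_{\tilde{B}_s}(N(x,U))>0$.

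Since the vector $x$ produced by Theorem \ref{main_theorem} works simultaneously for all non-empty open sets $U$, the inequality above shows that the same $x$ witnesses $B_r$-frequent hypercyclicity of $T$. As $r>1$ was arbitrary, $T$ is $B_r$-frequently hypercyclic for every $r>1$, which is the assertion of the corollary.

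There is essentially no obstacle here: the entire content is in Theorem \ref{main_theorem} and in the density comparison Lemma \ref{lemmacomp}. The only mild point requiring care is the bookkeeping of which $\tilde{B}_s$ dominates a given $B_r$ in the scale — one should simply note that Lemma \ref{lemmacomp} already arranges all of the $\tilde{B}_l$ ($l\geq 2$) above all of the $B_t$ ($t>1$), so any $s\geq 2$ suffices and no delicate tuning of $s$ against $r$ is needed.
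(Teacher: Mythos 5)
Your proof is correct and follows exactly the route the paper intends: Theorem \ref{main_theorem} gives $\tilde{B}_s$-frequent hypercyclicity for any $s\geq 2$, and the chain of inequalities in Lemma \ref{lemmacomp} transfers positive lower $\tilde{B}_s$-density of each return set $N(x,U)$ to positive lower $B_r$-density for every $r>1$. (A minor remark: in your parenthetical re-derivation via Lemma \ref{lemmastieltjes} the roles of $\alpha_k$ and $\beta_k$ are swapped --- to get $\underline{d}_{\tilde{B}_s}\leq\underline{d}_{B_r}$ one takes $\beta_k=e^{k/h_s(k)}$ and $\alpha_k=e^{k/\log^r(k)}$ so that $\alpha_k/\beta_k$ decreases to zero --- but this is immaterial since you ultimately invoke the ordering recorded in Lemma \ref{lemmacomp} directly.)
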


\section{A frequently hypercyclic operator which is not $A_r$-frequently hypercyclic}\label{fhc_op} 
In this final section, we are going to show that there exists frequently hypercyclic operator, that 
do not belong to the class of $A_r$-frequently hypercyclic operator, for any $0<r\leq 1.$ According to Proposition 
\ref{main_prop} or Theorem \ref{main_theorem} such an operator 
cannot satisfy the Frequent Hypercyclicity Criterion. To build it, we are going to use 
several ideas of the work \cite{Bayru}, where the authors provide some counterexamples to
questions regarding frequent hypercyclicity.\\

In a recent paper, Bayart and Ruzsa gave a characterization of frequently hypercyclic weighted 
shifts on the sequence spaces $\ell^p$ and $c_0$. We recall here their result on $c_0(\N)$ that will be useful in 
the following \cite[Therorem 13]{Bayru}.

\begin{theorem}\label{TheoBay}
Let $w=(\omega_n)_{n\in\N}$ be a bounded sequence of positive integers. Then $B_w$ is frequently hypercyclic on $c_0(\N)$ if and only if there exist a sequence $(M(p))$ of positive real numbers tending to $+\infty$ and a sequence $(E_p)$ of subsets of $\N$ such that:
\begin{enumerate}[(a)]
\item For any $p\geq 1$, $\underline{d}\left(E_p\right)>0$;
\item For any $p,q\geq1$, $p\neq q$, $\left(E_p+[0,p]\right)\cap\left(E_q+[0,q]\right)=\emptyset$;
\item $\lim_{n\to\infty,\ n\in E_p+[0,p]} \omega_1\cdots \omega_n=+\infty$;
\item For any $p,q\geq1$, for any $n\in E_p$ and any $m\in E_q$ with $m>n$, for any $t\in\{0,\ldots,q\}$,
$$\omega_1\cdots \omega_{m-n+t}\geq M(p)M(q).$$
\end{enumerate}
\end{theorem}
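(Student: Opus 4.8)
The plan is to pass to coordinates and then treat the two implications separately. Writing $B_w e_m=\omega_m e_{m-1}$ and $\Pi_k:=\omega_1\cdots\omega_k$, one has $(B_w^n x)_j=(\Pi_{j+n}/\Pi_j)\,x_{j+n}$, so a single entry $x_{n+j}$ governs the $j$-th coordinate of \emph{every} iterate $B_w^n x$ with $n<n+j$; moreover $(\Pi_k)$ is non-decreasing because the weights are positive integers, and $\Pi_k\to+\infty$ once (c) holds. Every estimate below is a direct consequence of this dictionary.

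For necessity, let $x$ be a frequently hypercyclic vector. I would choose a sequence $M(p)\uparrow+\infty$ inductively, both ``generically'' and growing fast (the precise requirement surfaces within the argument), set $W:=\sup_n\omega_n$, $\varepsilon_p:=1/(2WM(p))$, $y^{(p)}:=M(p)(e_1+\cdots+e_p)$, $F_p:=N(x,B(y^{(p)},\varepsilon_p))$, and then thin $F_p$ to a set $E_p$ of positive lower density whose consecutive elements are at least $G_p$ apart, with $G_p$ large enough that $\Pi_{G_p}\ge M(p)^2$. Then (a) is immediate. For (c): for $n\in E_p$ and $1\le j\le p$, $|(B_w^n x)_j-M(p)|<\varepsilon_p$ forces $|x_{n+j}|\ge(M(p)-\varepsilon_p)\Pi_j/\Pi_{n+j}$, so $x\in c_0$ gives $\Pi_m\to+\infty$ along $E_p+[0,p]$ by monotonicity. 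For (b): if $n\in E_p$, $m\in E_q$ and $n+s=m+t$, reading the common entry off both targets forces $M(p)\Pi_s$ and $M(q)\Pi_t$ to be within their combined error of one another; since $\Pi_s/\Pi_t$ runs, for fixed $s,t$, over a countable set, choosing the $M(p)$ to avoid the corresponding forbidden intervals makes this impossible unless $p=q$, whence $(E_p+[0,p])\cap(E_q+[0,q])=\varnothing$; in particular $m-n>p$ when $p\ne q$ and $m>n$. Finally (d): for $p=q$ it holds because consecutive elements of $E_p$ are $G_p$ apart and $\Pi_{G_p}\ge M(p)^2$; for $p\ne q$, the entry $x_{m+1}$ is forced by the target at time $m$ to have modulus $\ge(M(q)-\varepsilon_q)/\Pi_{m+1}$, while the same entry reappears in $B_w^n x$ at coordinate $m+1-n>p$, where the target at time $n$ (which vanishes there) forces modulus $<\varepsilon_p$; comparing the two bounds yields $\Pi_{m-n+t}\ge M(p)M(q)$ for every $t\ge0$.

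For sufficiency, given $(E_p)$ and $(M(p))$ satisfying (a)--(d), I would build a frequently hypercyclic vector by superposition. Fix a dense sequence $(z_i)$ of finitely supported vectors; since a set of positive lower density contains a positive-lower-density subset with arbitrarily large gaps, replace each $E_p$ by such a subset $E_p'$ with consecutive elements more than $2p$ apart and with $\overline{d}(E_p')$ decreasing to $0$ as fast as one wishes (conditions (a)--(d) persist). Assign, injectively, to each requirement $(z_i,1/k)$ a scale $p$ with $p\ge\max\supp(z_i)$ and $M(p)$ large relative to $\|z_i\|$, $k$ and the weight products already involved; write $z_p$ for the target assigned to $p$, choose thresholds $N_p$ so large that the stage-$p$ contribution below is $O(2^{-p})$, and set
$$x:=\sum_{p}\ \sum_{\substack{n\in E_p'\\ n\ge N_p}}b_{p,n},\qquad (b_{p,n})_{n+s}:=(z_p)_s\,\frac{\Pi_s}{\Pi_{n+s}}\quad(0\le s\le p),$$
so that $B_w^n b_{p,n}$ coincides with $z_p$ on $\supp(z_p)$. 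By (b) the action windows $n+\supp(b_{p,n})$ are pairwise disjoint, so each entry of $x$ carries at most one bump and, at a time $n\in E_p'$, no other scale disturbs the coordinates $1,\dots,p$ where the target lives; by (c) a planted entry at position $m$ has modulus $O(\Pi_p/\Pi_m)$ with $m\to+\infty$ inside $E_p+[0,p]$, so $x\in c_0$; and (d) controls the residual interference in the coordinates $>p$: a bump centred after $n$ contributes at most $\|z_q\|\Pi_q/(M(p)M(q))$, which the adaptive choice of scales keeps below $1/k$, whereas a bump centred before $n$ must sit far ahead of $n$, so by the small upper density of the $E_q'$ it spoils only a set of times of lower density less than $\underline{d}(E_p')$. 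Discarding that negligible set leaves a positive-lower-density subset of $E_p'$ on which $\|B_w^n x-z_i\|<1/k$; density of $(z_i)$ and $1/k\to0$ then make $x$ frequently hypercyclic.

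The main obstacle is the sufficiency direction, precisely the simultaneous bookkeeping that makes $x$ lie in $c_0$ \emph{and} keeps every cross-scale interference below the prescribed error: this is what forces, all at once, the truncations $N_p$, the extra thinnings of the $E_p$ with controlled upper densities, the adaptive matching of targets to scales (demanding requirements must be served at large scales, supports must be spread out), and the discarding of the sparse ``bad'' return times; reconciling all of these is the technical core. The necessity direction, by contrast, is essentially a bookkeeping of the constraints the targets impose on the coordinates of $x$, the one delicate point being the inductive, generic choice of the $M(p)$ that secures (b).
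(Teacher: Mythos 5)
This statement is not proved in the paper at all: it is quoted verbatim from Bayart and Ruzsa \cite[Theorem 13]{Bayru} and then used as a black box in Section \ref{fhc_op}, so there is no internal proof to compare yours against. Judged on its own terms, your reconstruction follows the standard Bayart--Ruzsa strategy and is essentially sound: the coordinate dictionary $(B_w^nx)_j=(\Pi_{j+n}/\Pi_j)x_{j+n}$, the extraction of $E_p$ from return sets to $M(p)(e_1+\cdots+e_p)$ with a generic inductive choice of the $M(p)$, and the superposition of bumps $b_{p,n}$ with disjoint action windows for sufficiency are the right moves, and (a), (c), (d) come out as you describe.

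Two points need repair. First, you lean repeatedly on monotonicity of $\Pi_k=\omega_1\cdots\omega_k$ (to convert ``gaps $\geq G_p$'' into $\Pi_{m-n+t}\geq\Pi_{G_p}\geq M(p)^2$ for the $p=q$ case of (d), and to bound $\Pi_s\leq\Pi_p$ in the sufficiency estimates). This is only legitimate under the literal reading ``positive integers'', which is surely a misprint: the theorem is applied in this very paper to weights $w_n\in[1/2,2]$, for which $\Pi_k$ is not monotone. For general bounded weights the $p=q$ case of (d) must instead be read off coordinates, exactly as you do for $p\neq q$: after thinning so that $m-n>p$, coordinate $m-n+t$ of $B_w^nx$ lies outside the support of the target while coordinate $t$ of $B_w^mx$ lies inside it, and comparing the two readings of $x_{m+t}$ gives $\Pi_{m-n+t}\geq(M(p)-\varepsilon_p)\Pi_t/\varepsilon_p$ with no monotonicity needed. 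Second, in the sufficiency direction your treatment of ``bumps centred before $n$'' is both unnecessary and, as written, unsound: a bump $b_{q,m}$ with $m<n$ can only interfere with $B_w^nx$ if $n<m+q$, i.e. $n\in m+[0,q]$, which is already excluded by (b) when $q\neq p$ and by your $2p$-gap thinning when $q=p$, so no return times need to be discarded. That is fortunate, because the discarding you propose rests on controlling the upper density of the countable union $\bigcup_q(E_q'+[1,q])$ by the individual $\overline{d}(E_q')$, and upper density is not countably subadditive. With these corrections, plus a word on the $s=0$ or $t=0$ edge cases in the derivation of (b) (where one of the two coordinate readings falls outside the target's support and yields a smallness rather than a closeness constraint), the argument is complete.
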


In the same paper, the authors also provide examples of a $\mathcal{U}$-frequently hypercyclic weighted shift which is not frequently hypercyclic and of a frequently hypercyclic weighted shift which is not distributionally chaotic. In what follows, we modify these constructions in order to provide a frequently hypercyclic weighted shift on $c_0(\N)$ which is not $A_r$-frequently hypercyclic for any $0<r<1$.
To that purpose, we will need the following lemma \cite[Lemma 1]{Bayru}:

\begin{lemma} There exist 
$a>1$ and $\varepsilon>0$ such that  
$\overline{d}\left(\cup_{u\geq1}I_{u}^{a,4\varepsilon}\right)<1$ and, for any integer $u>v\geq  1$,
$$I_{u}^{a,2\varepsilon}\cap I_{v}^{a,2\varepsilon}=\emptyset, \ I_{u}^{a,2\varepsilon}-I_{v}^{a,2\varepsilon}\subset I_{u}^{a,4\varepsilon},$$
where $I_u^{a,\varepsilon}=[(1-\varepsilon)a^u,(1+\varepsilon)a^u].$
\end{lemma}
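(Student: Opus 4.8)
The plan is to make the choice of $a$ and $\varepsilon$ completely explicit and then to read off the three assertions from elementary estimates on the endpoints of the intervals $I_u^{a,\eta}=[(1-\eta)a^u,(1+\eta)a^u]$. Concretely, I would first fix $\varepsilon>0$ small enough (any $\varepsilon<1/20$ works) and then choose $a\geq(1+2\varepsilon)/(2\varepsilon)$; note that this forces $a\geq 2$ and $a>(1+2\varepsilon)/(1-2\varepsilon)$, the two inequalities used below. (For the minimal choice $a=(1+2\varepsilon)/(2\varepsilon)$ one has the pleasant identity $a/(a-1)=1+2\varepsilon$.)

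For the separation property $I_u^{a,2\varepsilon}\cap I_v^{a,2\varepsilon}=\emptyset$ when $u\neq v$: since these intervals are arranged along the half-line in increasing order of their index, it suffices to separate two consecutive ones, i.e.\ to have $(1+2\varepsilon)a^u<(1-2\varepsilon)a^{u+1}$, which is exactly $a>(1+2\varepsilon)/(1-2\varepsilon)$. For the inclusion $I_u^{a,2\varepsilon}-I_v^{a,2\varepsilon}\subset I_u^{a,4\varepsilon}$ (with $u>v$), one computes that the difference of the two intervals is $[(1-2\varepsilon)a^u-(1+2\varepsilon)a^v,\ (1+2\varepsilon)a^u-(1-2\varepsilon)a^v]$. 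Its right endpoint is at most $(1+2\varepsilon)a^u\leq(1+4\varepsilon)a^u$, and its left endpoint is at least $(1-4\varepsilon)a^u$ as soon as $2\varepsilon a^u\geq(1+2\varepsilon)a^v$; since $u-v\geq 1$ this follows from $a\geq(1+2\varepsilon)/(2\varepsilon)$.

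For the density bound I would count integers directly. Each $I_u^{a,4\varepsilon}$ contains at most $8\varepsilon a^u+1$ integers. Given $N$, let $U$ be the number of indices $u$ with $(1-4\varepsilon)a^u\leq N$; every interval of index larger than $U$ then lies entirely to the right of $N$, so
$$\#\Big([1,N]\cap\bigcup_{u\geq1}I_u^{a,4\varepsilon}\Big)\leq\sum_{u=1}^{U}(8\varepsilon a^u+1)\leq\frac{a}{a-1}\,8\varepsilon a^U+U,$$
while $N\geq(1-4\varepsilon)a^U$. Dividing and letting $N\to\infty$ (hence $U\to\infty$), and using $a/(a-1)\leq 2$, gives $\overline{d}\big(\bigcup_{u}I_u^{a,4\varepsilon}\big)\leq\frac{8\varepsilon}{1-4\varepsilon}\cdot\frac{a}{a-1}\leq\frac{16\varepsilon}{1-4\varepsilon}<1$ by the choice $\varepsilon<1/20$.

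The only point that calls for a bit of care is this last estimate: the requirement $a\geq(1+2\varepsilon)/(2\varepsilon)$ forces $a$ to grow as $\varepsilon\to 0$, so one must check that the geometric‑series factor $a/(a-1)$ does not blow up. It does not — it stays bounded by $2$ — so shrinking $\varepsilon$ still drives the upper density strictly below $1$ while keeping the separation and difference conditions in force. Everything else is just monotonicity of $u\mapsto a^u$ together with bookkeeping on interval endpoints.
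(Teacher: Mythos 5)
Your proof is correct. The paper does not actually prove this lemma --- it is quoted from Bayart and Ruzsa (\cite{Bayru}, Lemma 1), with only the remark that one should take $a$ very large and $\varepsilon$ very small --- and your explicit choice $\varepsilon<1/20$, $a\geq(1+2\varepsilon)/(2\varepsilon)$ implements exactly that philosophy: the endpoint computations for the disjointness and for the difference-set inclusion are right, and the counting argument for the upper density (including the observation that $a/(a-1)$ stays bounded by $2$ even though $a\to\infty$ as $\varepsilon\to 0$) is sound.
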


The philosophy of the previous lemma is that it suffices to choose $a$ very large and at the same time $\varepsilon$ very small to obtain the result stated. This allows us to strengthen this lemma demanding also that the following 
condition holds:
\begin{equation}\label{cond1}
\frac{1-\varepsilon}{1+\varepsilon}a>1.
\end{equation}

From now on, we suppose that $a$ and $\varepsilon$ are given by the previous lemma with the additional 
condition $(\ref{cond1}).$

Let also $(b_p)$ be an increasing sequence of integers such that
\begin{equation}\label{cond2}
\sum_{q\geq 1}\frac{(4q+1)(2q+1)}{b_q}e^{2q}<\infty\text{ and }  b_p\geq 8p.
\end{equation}

Finally, let $(A_p)$ be any syndetic partition of $\N$ and 
$$E_p=\cup_{u\in A_p}\left(I_{u}^{a,\varepsilon}\cap\left(b_p\N+[0,p]\right)\right).$$

Bayart and Ruzsa construct such sets and they prove that these sets have positive lower density 
\cite[Lemma 2]{Bayru}. Then, for the same reasons we have $\underline{d}(E_p)>0.$ 
Further, the following lemma is almost the same as \cite[Lemma 3]{Bayru} once again and it still holds 
in our context:

\begin{lemma}
Let $p,q\geq1$, $n\in E_p$, $m\in E_q$ with $n\neq m$. Then $\vert n-m\vert>\max(p,q)$.
\end{lemma}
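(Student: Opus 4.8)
The plan is to mimic \cite[Lemma 3]{Bayru} verbatim, exploiting only the combinatorial structure of the sets $E_p$ and the separation property of the covering intervals $I_u^{a,\varepsilon}$. First I would reduce to the case where $n$ and $m$ come from intervals indexed by possibly different $u$'s: write $n\in I_u^{a,\varepsilon}$ with $u\in A_p$ and $m\in I_v^{a,\varepsilon}$ with $v\in A_q$. If $u=v$, then $p=q$ since $(A_p)$ is a partition, and moreover $n\equiv r\pmod{b_p}$, $m\equiv r'\pmod{b_p}$ for some $r,r'\in[0,p]$; since $n\neq m$ and $b_p\geq 8p>2p$ (by the second part of \eqref{cond2}), the two residues cannot force $n=m$, so $|n-m|\geq b_p-p\geq 7p>\max(p,q)$, which even gives a much stronger bound than needed.

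Next I would treat the case $u\neq v$, say $u>v$. Here the key input is the separation $I_u^{a,2\varepsilon}\cap I_v^{a,2\varepsilon}=\emptyset$ from the cited lemma of Bayart--Ruzsa, which a fortiori gives $I_u^{a,\varepsilon}\cap I_v^{a,\varepsilon}=\emptyset$; more quantitatively, since these are intervals around $a^u$ and $a^v$ respectively with $a^u\geq a\cdot a^v$ and $\frac{1-\varepsilon}{1+\varepsilon}a>1$ by the extra condition \eqref{cond1}, the gap between $I_v^{a,\varepsilon}$ and $I_u^{a,\varepsilon}$ is at least $(1-\varepsilon)a^u-(1+\varepsilon)a^v=(1+\varepsilon)a^v\bigl(\tfrac{1-\varepsilon}{1+\varepsilon}a^{u-v}-1\bigr)>0$, and in particular $|n-m|\geq (1-\varepsilon)a^v(\tfrac{1-\varepsilon}{1+\varepsilon}a-1)$. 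The point is that this lower bound grows like $a^v$, hence tends to $+\infty$ with $v$, so it exceeds $\max(p,q)$ as soon as $v$ (equivalently $u$) is large enough; for the finitely many small values of $v$ one adjusts $a$ (it is already taken ``very large'' in the cited lemma) so that even the smallest gap $(1-\varepsilon)a(\tfrac{1-\varepsilon}{1+\varepsilon}a-1)$ dominates the relevant $p,q$, exactly as in \cite{Bayru}. Since $\max(p,q)$ is controlled by the syndeticity of the partition $(A_p)$ relative to the index $u$ — an element of $I_u^{a,\varepsilon}$ can only belong to $E_p$ for $p$ with $u\in A_p$, and a syndetic partition forces such a $p$ to be $O(u)$ while the gap is exponential in $u$ — the bound $|n-m|>\max(p,q)$ follows for all $u>v$.

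I expect the only genuine subtlety to be bookkeeping the interplay between the two roles that $p,q$ can play: they index the partition class containing $u$ (hence are comparable to $\log_a u$ by syndeticity) and they also appear as the ``$+[0,p]$'' shift and the modulus $b_p$. The clean way is to first dispose of $u=v$ using $b_p\geq 8p$ and then, for $u\neq v$, observe that the exponential gap $\gtrsim a^{\min(u,v)}$ trivially beats any quantity that is $O(\max(u,v))=O(\max(\log n,\log m))$, which $\max(p,q)$ certainly is. So the ``hard part'' is really just making precise that $\max(p,q)$ is logarithmically small compared to $|n-m|$ in the mixed case; once that is stated, the inequality is immediate from \eqref{cond1} and the disjointness in the Bayart--Ruzsa lemma. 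No new idea beyond \cite[Lemma 3]{Bayru} is needed, so I would simply write ``the proof is identical to that of \cite[Lemma 3]{Bayru}, using \eqref{cond1} in place of the weaker separation there,'' after spelling out the two cases above.
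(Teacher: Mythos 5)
The paper offers no written proof of this lemma --- it simply asserts that the argument of \cite[Lemma 3]{Bayru} carries over --- so your reconstruction has to stand on its own, and as written it has two genuine gaps. First, in the case $u=v$ (hence $p=q$) your bound $|n-m|\geq b_p-p$ only covers the subcase where $n$ and $m$ lie in \emph{different} translates of $b_p\N$. With $E_p=\bigcup_{u\in A_p}\bigl(I_u^{a,\varepsilon}\cap(b_p\N+[0,p])\bigr)$ as printed, a single interval $I_u^{a,\varepsilon}$ (of length $2\varepsilon a^u$) contains entire blocks $jb_p+[0,p]$, and two distinct elements of the same block are at distance at most $p$, not more than $p$. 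The phrase ``the two residues cannot force $n=m$'' does not address this. The statement is only saved if one takes $E_p\subset\bigcup_{u\in A_p}(I_u^{a,\varepsilon}\cap b_p\N)$, as in Bayart--Ruzsa, in which case distinct elements in the same interval differ by at least $b_p\geq 8p$; you should either adopt that reading explicitly or flag the discrepancy, but you cannot pass over the same-block subcase in silence.

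Second, in the case $u>v$ your control of $\max(p,q)$ is wrong. Syndeticity of each $A_p$ gives no relation whatsoever between the label $p$ and the elements of $A_p$: a syndetic partition may perfectly well have $u\in A_p$ with $p$ astronomically larger than $u$, so ``$p=O(u)$ by syndeticity'' is false, and ``$\max(p,q)=O(\max(\log n,\log m))$'' is unjustified. The control that actually works comes from \eqref{cond2}: since $n\in b_p\N+[0,p]$ with $\N$ the positive integers, $n\geq b_p\geq 8p$, hence $p\leq n/8\leq(1+\varepsilon)a^u/8$ and likewise $q\leq(1+\varepsilon)a^v/8$ --- a \emph{linear}, not logarithmic, bound, and one you never invoke in this case. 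This matters because your lower bound on the gap, of order $a^{\min(u,v)}=a^v$, is then insufficient: $p$ can be of order $a^u\gg a^v$. One must instead use $n-m\geq(1-\varepsilon)a^u-(1+\varepsilon)a^{u-1}$ and compare it with $(1+\varepsilon)a^u/8$, which succeeds once $a$ is large and $\varepsilon$ small (this is slightly more than the bare inequality \eqref{cond1}, but is available from the ``$a$ large, $\varepsilon$ small'' choice in the Bayart--Ruzsa lemma). Finally, ``adjusting $a$ for the finitely many small $v$'' is not a legitimate move at this stage --- $a$ and $\varepsilon$ are fixed before the $E_p$ are built --- and in any case the problematic regime is not small $v$ but large $p$ relative to $v$.
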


In particular, $(E_p+[0,p])\cap(E_q+[0,q])=\emptyset$ if $p\neq q$.\\
Thus, the sequence of sets $(E_p)$ satisfy conditions $(a)$ and $(b)$ from Theorem \ref{TheoBay}.

We now turn to the construction of the weights of the weighted shift we are looking for. For this construction, we also draw our inspiration from constructions made in \cite{Bayru}. We set:
$$w_{0}^{p}\cdots w_{k-1}^{p}=\begin{cases}1 &\text{ if }k\notin b_p\N+[-4p,4p]\\
2^p &\text{ if }k\in b_p\N+[-2p,2p]
\end{cases}$$
and for every $k\in\N$, $\frac{1}{2}\leq w_{k}^{p}\leq 2$.
Then for $p,q\geq1$, $u\in A_p$ and $v\in A_q$ with  $u>v$ we define
$$w_{0}^{u,v}\cdots w_{k-1}^{u,v}=\begin{cases}1 &\text{ if }k\notin I_{u}^{a,4\varepsilon}\\
\max(2^p,2^q) &\text{ if }k\in I_{u}^{a,\varepsilon}-I_{v}^{a,\varepsilon}+[0,p]
\end{cases}$$
and for every $k\in\N$, $\frac{1}{2}\leq w_{k}^{u,v}\leq 2$.

We are now able to give the definition of the weight $w$. This one is constructed in order to satisfy the following equality:
$$w_{0}\cdots w_{n-1}=\max_{p,u,v}\left(w_{0}^{p}\cdots w_{n-1}^{p},w_{0}^{u,v}\cdots w_{n-1}^{u,v}\right).$$
It is clear by construction that for every $n\in\N$, $\frac{1}{2}\leq w_n\leq2$, so the weighted backward shift $B_w$ is bounded and invertible. Moreover this construction satisfies condition $(c)$ in Theorem \ref{TheoBay}.

Since we want to prove that $B_w$ is frequently hypercyclic, the only condition left to prove is condition $(d)$ from Theorem \ref{TheoBay}.
Thus let $p,q\geq1$, $n\in E_p$ and $m\in E_q$ with $m>n$ and $t\in[0,q]$. Then we have two cases:
\medskip

$\bullet$ If $p=q$, then $m-n+t
\in b_q\N+[-q,2q]$ and the definition of $w$ ensures that $w_{0}\cdots w_{m-n+t}\geq 2^{q}$.

\medskip
$\bullet$ If $p\neq q$, then there exists $u>v$ such that $n\in I_{v}^{a,\varepsilon}$ and $m\in I_{u}^{a,\varepsilon}$. Thus, by definition of $w$,
$$w_{0}\cdots w_{m-n+t}\geq \max\left(2^p;2^q\right)\geq 2^{\frac{p+q}{2}}\geq \lfloor2^{\frac{p}{2}}\rfloor\cdot\lfloor2^{\frac{q}{2}}\rfloor.$$

Now, one may define $M(p):=\lfloor2^{\frac{p}{2}}\rfloor$ and each case above satisfies condition $(d)$ from Theorem \ref{TheoBay}. Thus we have proved that the weighted shift $B_w$ is frequently hypercyclic.\\

We now turn to the $A_r$-frequent hypercyclicity of $B_w$. We are going to prove by contradiction that $B_w$ is not $A_r$-frequently hypercyclic for every $0<r<1$.\\
Let us suppose that $B_w$ is $A_r$-frequently hypercyclic and that $E\subset \N$ is such that $\underline{d}_{A_r}(E)>0$ and $\lim_{n\to\infty, n\in E} w_1\cdots w_n=+\infty$. Such a set exists since $B_w$ is $A_r$-frequently hypercyclic. Indeed it suffices to consider $E=\{n\in\N: \Vert B_{w}^{n}(x)-e_0\Vert\leq \frac{1}{2}\}$ where $x$ is a $A_r$-frequently hypercyclic vector.

For every $p\geq 1$, we consider the set:
$$F_p=\{n\in E: w_1\cdots w_n>2^p\}.$$
This set is a cofinite subset of $E$, so it has the same lower $A_r$-density.
We also consider an increasing enumeration $(n_k)$ of $A_p$.

Then $$\underline{d}_{A_r}(F_p)\leq \liminf_{k\to\infty}\left(
\displaystyle\frac{\displaystyle\sum_{n\leq (1+\varepsilon)a^{n_k},\atop  n\in F_p}e^{n^r}+
\sum_{(1+\varepsilon)a^{n_{k}}< n\leq (1-\varepsilon)a^{n_{k+1}},\atop  n\in \cup_{q>p}\left(b_q\N+[-2q,2q]\right)}e^{n^r}}
{\displaystyle\sum_{n\leq (1-\varepsilon)a^{n_{k+1}}}e^{n^r}}\right).$$

Moreover, since we have $\sum_{n\leq N}e^{n^r}\sim\frac{1}{r}N^{1-r}e^{N^r}$, as 
$N$ tends to $\infty,$ we get
$$\underline{d}_{A_r}(F_p)\leq \liminf_{k\to\infty}\left(
\displaystyle
\frac{\frac{1}{r}\left((1+\varepsilon)a^{n_k}\right)^{1-r}e^{\left((1+\varepsilon)a^{n_k}\right)^r}+
\displaystyle\sum_{(1+\varepsilon)a^{n_{k}}< n\leq (1-\varepsilon)a^{n_{k+1}},\atop  n\in \cup_{q>p}\left(b_q\N+[-2q,2q]\right)}e^{n^r}}
{\frac{1}{r}\left((1-\varepsilon)a^{n_{k+1}}\right)^{1-r}e^{\left((1-\varepsilon)a^{n_{k+1}}\right)^r}}\right).$$
A straightforward computation using inequality (\ref{cond1}) proves that the first term on the right-hand side tends to $0.$ We now focus on the second term:
\begin{align*}
\underline{d}_{A_r}(F_p)&\leq \liminf_{k\to\infty}\left(
\frac{\displaystyle\sum_{(1+\varepsilon)a^{n_{k}}< n\leq (1-\varepsilon)a^{n_{k+1}},\atop n\in \cup_{q>p}\left(b_q\N+[-2q,2q]\right)}e^{n^r}}{\frac{1}{r}\left((1-\varepsilon)a^{n_{k+1}}\right)^{1-r}e^{\left((1-\varepsilon)a^{n_{k+1}}\right)^r}}\right)\\
&\leq\liminf_{k\to\infty}\frac{\sum_{q>p}(4q+1)\sum_{j=1}^{\frac{(1-\varepsilon)a^{n_{k+1}}}{b_q+2q}}e^{\left(jb_q+2q\right)^r}}{\frac{1}{r}\left((1-\varepsilon)a^{n_{k+1}}\right)^{1-r}e^{\left((1-\varepsilon)a^{n_{k+1}}\right)^r}}.\\
\end{align*}
An classical calculation ensures that we have 
$$\sum_{j=1}^{\frac{(1-\varepsilon)a^{n_{k+1}}}{b_q+2q}}e^{\left(jb_q+2q\right)^r}\sim
\frac{\left(\frac{b_q(1-\varepsilon) a^{n_{k+1}}}{b_q+2q}+2q\right)^{1-r}}{r b_q}e^{\left(\frac{b_q(1-\varepsilon) a^{n_{k+1}}}{b_q+2q}+2q\right)^r},$$ as 
$k$ tends to $\infty.$ Thus we obtain
\begin{align*}
\underline{d}_{A_r}(F_p)&\leq\liminf_{k\to\infty}\frac{\sum_{q>p}(4q+1)\left(\frac{b_q(1-\varepsilon) a^{n_{k+1}}}{b_q+2q}+2q\right)^{1-r}e^{\left(\frac{b_q(1-\varepsilon) a^{n_{k+1}}}{b_q+2q}+2q\right)^r}}{b_q\left((1-\varepsilon)a^{n_{k+1}}\right)^{1-r}e^{\left((1-\varepsilon)a^{n_{k+1}}\right)^r}}\\
&\leq \liminf_{k\to\infty}\sum_{q>p}\frac{4q+1}{b_q}\left(\frac{b_q}{b_q+2q}+\frac{2q}{(1-\varepsilon) a^{n_{k+1}}}\right)^{1-r}e^{\left((1-\varepsilon) a^{n_{k+1}}+2q\right)^r-\left((1-\varepsilon)a^{n_{k+1}}\right)^r}\\
&\leq\sum_{q>p}\frac{4q+1}{b_q}(1+2q)^{1-r}e^{(2q)^r}\\
&\leq\sum_{q>p}\frac{4q+1}{b_q}(1+2q)e^{2q}.
\end{align*}
Recall that this does not require any property on $p$ so we can let $p$ tend to infinity which, thanks to (\ref{cond2}), implies that $\underline{d}_{A_r}(E)=\lim_{p\to\infty} \underline{d}_{A_r}(F_p)=0$, hence we obtain a contradiction.
Thus the weighted shift $B_w$ is not $A_r$-frequently hypercyclic. From this construction together with Corollary \ref{coro_ar}, we deduce the following 
result.

\begin{theorem}\label{counterexample} There exists a frequently hypercyclic operator being not $A_r$-frequently hypercyclic, for 
any $0<r\leq 1,$ hence which does not satisfy the Frequent Hypercyclicity Criterion.
\end{theorem}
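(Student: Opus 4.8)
The statement packages together the construction developed in this section, so the plan is to assemble the pieces and dispose of the endpoint $r=1$. First I would record that the weighted backward shift $B_w$ on $c_0(\N)$, defined by $w_0\cdots w_{n-1}=\max_{p,u,v}\bigl(w_0^{p}\cdots w_{n-1}^{p},\,w_0^{u,v}\cdots w_{n-1}^{u,v}\bigr)$, has all weights in $[1/2,2]$ and hence is bounded and invertible. The sets $E_p=\cup_{u\in A_p}\bigl(I_u^{a,\varepsilon}\cap(b_p\N+[0,p])\bigr)$ satisfy $\underline d(E_p)>0$ and, by the last lemma above, the separation $\vert n-m\vert>\max(p,q)$, so conditions $(a)$ and $(b)$ of Theorem \ref{TheoBay} hold; condition $(c)$ is built into the definition of $w$; and condition $(d)$ is verified by distinguishing $p=q$ (then $m-n+t\in b_q\N+[-q,2q]$, so $w_0\cdots w_{m-n+t}\ge 2^q$) from $p\ne q$ (then $n\in I_v^{a,\varepsilon}$, $m\in I_u^{a,\varepsilon}$ for some $u>v$, so $w_0\cdots w_{m-n+t}\ge\max(2^p,2^q)\ge\lfloor 2^{p/2}\rfloor\lfloor 2^{q/2}\rfloor$), with $M(p)=\lfloor 2^{p/2}\rfloor$. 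Theorem \ref{TheoBay} then gives that $B_w$ is frequently hypercyclic.

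The substantive step — the genuine obstacle — is the failure of $A_r$-frequent hypercyclicity for $0<r<1$. Arguing by contradiction from an $A_r$-frequently hypercyclic vector $x$, one sets $E=\{n:\Vert B_w^n x-e_0\Vert\le 1/2\}$, so that $\underline d_{A_r}(E)>0$ and $w_1\cdots w_n\to\infty$ along $E$, and $F_p=\{n\in E:w_1\cdots w_n>2^p\}$, a cofinite subset of $E$ with the same lower $A_r$-density. Enumerating $A_p$ increasingly as $(n_k)$, one bounds the denominator $\sum_{n\le(1-\varepsilon)a^{n_{k+1}}}e^{n^r}$ from below by its asymptotic $\tfrac1r\bigl((1-\varepsilon)a^{n_{k+1}}\bigr)^{1-r}e^{((1-\varepsilon)a^{n_{k+1}})^r}$, and splits the numerator into a boundary term over $n\le(1+\varepsilon)a^{n_k}$, which tends to $0$ by condition (\ref{cond1}), and a main term over the window $\bigl((1+\varepsilon)a^{n_k},(1-\varepsilon)a^{n_{k+1}}\bigr]$, on which the elements of $F_p$ must lie in $\cup_{q>p}(b_q\N+[-2q,2q])$. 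A summation by parts for $\sum_j e^{(jb_q+2q)^r}$ then produces $\underline d_{A_r}(F_p)\le\sum_{q>p}\tfrac{4q+1}{b_q}(1+2q)e^{2q}$. Since this is a tail of the convergent series in (\ref{cond2}), letting $p\to\infty$ forces $\underline d_{A_r}(E)=\lim_p\underline d_{A_r}(F_p)=0$, contradicting $\underline d_{A_r}(E)>0$; hence $B_w$ is not $A_r$-frequently hypercyclic.

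The endpoint $r=1$ is then immediate: by the proposition established above, there is no $A_1$-frequently hypercyclic operator at all, so in particular $B_w$ is not one. Combining the two ranges, $B_w$ is a frequently hypercyclic operator that fails to be $A_r$-frequently hypercyclic for every $0<r\le 1$.

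Finally, for the last clause I would invoke Corollary \ref{coro_ar} (equivalently Proposition \ref{main_prop} or Theorem \ref{main_theorem}): any operator satisfying the Frequent Hypercyclicity Criterion is $A_r$-frequently hypercyclic for every $0\le r<1$. Since $B_w$ fails this already for $r=1/2$, it cannot satisfy the Frequent Hypercyclicity Criterion, and the proof is complete.
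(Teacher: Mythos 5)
Your proposal is correct and follows essentially the same route as the paper: frequent hypercyclicity via the Bayart--Ruzsa characterization with the sets $E_p$ and the weights $w$, the contradiction argument with $F_p$ and the convergent series from (\ref{cond2}) for $0<r<1$, the earlier proposition excluding $A_1$-frequent hypercyclicity for the endpoint $r=1$, and Corollary \ref{coro_ar} for the final clause. No gaps to report.
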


\end{document}